\documentclass[letterpaper, 11pt,  reqno]{amsart}

\usepackage{amsmath,amssymb,amscd,amsthm,amsxtra, esint}
\usepackage{skak}
\usepackage{transparent}

\usepackage{enumerate}

\setlength{\pdfpagewidth}{8.50in}
\setlength{\pdfpageheight}{11.00in}

\headheight=8pt
\topmargin=0pt
\textheight=624pt
\textwidth=432pt
\oddsidemargin=18pt
\evensidemargin=18pt

\allowdisplaybreaks[2]

\sloppy

\hfuzz  = 0.5cm 


\usepackage{color}

\definecolor{gr}{rgb}   {0.,   0.69,   0.23 }
\definecolor{bl}{rgb}   {0.,   0.5,   1. }
\definecolor{mg}{rgb}   {0.85,  0.,    0.85}
\definecolor{yl}{rgb}   {0.8,  0.7,   0.}
\definecolor{or}{rgb}  {0.7,0.2,0.2}

\newtheorem{theorem}{Theorem} [section]

\newtheorem{lemma}[theorem]{Lemma}
\newtheorem{proposition}[theorem]{Proposition}
\newtheorem{remark}[theorem]{Remark}

\newtheorem*{acknowledgment}{Acknowledgments}


\DeclareMathOperator*{\intt}{\int}

%



\newcommand{\noi}{\noindent}
\newcommand{\Z}{\mathbb{Z}}
\newcommand{\R}{\mathbb{R}}
\newcommand{\C}{\mathbb{C}}
\newcommand{\T}{\mathbb{T}}

\let\Re=\undefined\DeclareMathOperator*{\Re}{Re}

\let\P= \undefined
\newcommand{\P}{\mathbf{P}}

\newcommand{\FL}{\mathcal{F}L}

\newcommand{\N}{\mathcal{N}}

\def\norm#1{\|#1\|}

\newcommand{\al}{\alpha}

\newcommand{\dl}{\delta}

\newcommand{\eps}{\varepsilon}

\newcommand{\g}{\gamma}
\newcommand{\G}{\Gamma}
\newcommand{\ld}{\lambda}

\newcommand{\s}{\sigma}

\newcommand{\ft}{\widehat}

\newcommand{\cj}{\overline}
\newcommand{\dx}{\partial_x}
\newcommand{\dt}{\partial_t}

\newcommand{\ta}{\theta}

\renewcommand{\l}{\ell}

\renewcommand{\O}{\Omega}

\newcommand{\les}{\lesssim}
\newcommand{\ges}{\gtrsim}

\newcommand{\jb}[1]
{\langle #1 \rangle}

\newcommand{\ind}{\mathbf 1}

\numberwithin{equation}{section}
\numberwithin{theorem}{section}

\begin{document}
\baselineskip = 12.7pt

\title[Transport of Gaussian measures under 1-d fractional NLS]
{On the transport of Gaussian measures under the one-dimensional
fractional nonlinear Schr\"odinger equations}

\author[Justin Forlano and William J. Trenberth]
{Justin Forlano and William J. Trenberth}

\address{
 Justin Forlano, Maxwell Institute for Mathematical Sciences\\
 Department of Mathematics\\
 Heriot-Watt University\\
 Edinburgh\\ 
 EH14 4AS\\
  United Kingdom}

\email{j.forlano@hw.ac.uk}

\address{William J. Trenberth,
Maxwell Institute for Mathematical Sciences \\
School of Mathematics \\
University of Edinburgh \\
Edinburgh \\
UK \\
EH9 3FD}

\email{W.J.Trenberth@sms.ed.ac.uk} 
\subjclass[2010]{35Q55}

\keywords{fractional nonlinear Schr\"odinger equation; quasi-invariance; Gaussian measure}

\begin{abstract}
Under certain regularity conditions, we establish quasi-invariance of Gaussian measures on periodic functions under the flow of cubic fractional nonlinear Schr\"{o}dinger equations on the one-dimensional torus.

\end{abstract}


\vspace*{-5mm}

\maketitle
\tableofcontents

\vspace*{-6mm}

\section{Introduction}

\subsection{Cubic fractional nonlinear Schr\"odinger equations}
We consider the cubic fractional nonlinear Schr\"{o}dinger equation (FNLS) 
on the one-dimensional torus $\T=\R/(2\pi\Z)$:
\begin{equation}\label{FNLS}
\begin{cases}
i\partial_t u+(-\dx^2)^\alpha u=\pm |u|^2u,\\
u|_{t=0}=u_0,
\end{cases}
\end{equation}
where $u:\R \times \T \longmapsto \mathbb{C}$ is the unknown function. For $\al>0$, let $(-\dx^{2})^{\al}$ be the Fourier multiplier operator defined by $ ((-\dx^{2})^{\al}f )\,\,\widehat{}\,\,(n):=|n|^{2\al}\ft f(n)$, $n\in \Z$, where $\ft f$ denotes the Fourier transform of $f$.
We say \eqref{FNLS} is defocusing when the sign on the nonlinearity is positive and focusing when the sign on the nonlinearity is negative. 

The equation FNLS~\eqref{FNLS} arises in various physical settings.
When $\al=1$, the equation~\eqref{FNLS} is the cubic nonlinear Schr\"{o}dinger equation (NLS) which appears as a model in the study of nonlinear optics, fluids and plasma physics; see \cite{Sulem} for a general survey. For $\al=2$, \eqref{FNLS} corresponds to the cubic fourth order NLS (4NLS) and has applications to the study of solitons in magnetic materials~\cite{4NLS1, 4NLS2}. FNLS with non-local dispersion $\tfrac{1}{2}< \al<1$ arises in continuum limits of long-range lattice interactions~\cite{FRAC1}. When $\al \leq \tfrac{1}{2}$, the equation FNLS~\eqref{FNLS} is no longer dispersive. 
However, the cubic nonlinear half-wave equation, corresponding to $\al=\tfrac 12$, 
emerges in the study of wave turbulence~\cite{FRAC3, FRAC2}, gravitational collapse~\cite{FRAC4, FRAC5} and has been well-studied analytically~\cite{GG,KLP,GLPR}. We also refer to \cite{Thirouin} for a study of \eqref{FNLS} when $\frac{1}{3}<\al <\frac{1}{2}$.
In the following, we focus on studying FNLS~\eqref{FNLS} for $\al>\tfrac 12$ where dispersion is present.

 The well-posedness theory of \eqref{FNLS} in the $L^{2}$-based Sobolev spaces $H^{\s}(\T)$ crucially depends upon the strength of the dispersion; namely, if $\al\geq 1$ or $\tfrac{1}{2}<\al<1$.
In \cite{Bou}, Bourgain proved local well-posedness of NLS in $L^{2}(\T)$, which immediately extends to global well-posedness in $L^{2}(\T)$ as a consequence of mass conservation:  
\begin{equation}
M(u)(t)=\int_{\T}|u(t,x)|^2\,dx=M(u)(0) \quad \text{for all} \,\, t\in \R. \label{mass}
\end{equation}
A persistence-of-regularity argument then implies global well-posedness of NLS in $H^{\s}(\T)$ for any $\s\geq 0$. This result is sharp in the sense that NLS is ill-posed if $\s<0$. More precisely, the solution map\footnote{For clarity of presentation, we neglect to explicitly show the dependence of the solution map $\Phi$ on $\al$. Unless otherwise stated, the precise value of $\al$ will be clear from the context.} $\Phi: u_0\in H^{\s}(\T) \mapsto u\in C([-T,T];H^{\s}(\T))$, if it even exists in view of the non-existence of solutions in~\cite{GO}, is discontinuous~\cite{Molinet} (see also \cite{BGT, OW1, ChofPoc,Oh, Kishimoto}). In \cite[Appendix A]{oh2016quasi}, the 4NLS was shown to be globally well-posed in $H^{\s}(\T)$ for any $\s\geq 0$.
 In Appendix~\ref{app:gwp} of this paper, we extend this global well-posedness result to FNLS~\eqref{FNLS} for any $\al>1$. 
 
The well-posedness situation for FNLS~\eqref{FNLS} is somewhat less complete in the setting $\tfrac{1}{2}<\al<1$. In~\cite{Cho}, Cho, Hwang, Kwon and Lee proved local well-posedness of FNLS~\eqref{FNLS} in $H^{\s}(\T)$ for $\s\geq \tfrac{1-\al}{2}$ by a contraction mapping argument; see also \cite{demirbas2013existence}. Following the argument in~\cite[Appendix A]{oh2016quasi}, we can show the solution map for FNLS~\eqref{FNLS} fails to be locally uniformly continuous in $H^{\s}(\T)$ for any $\s<0$. Thus we can not construct solutions below $L^{2}(\T)$ using a contraction mapping argument. See also \cite{ChofPoc}.

As for global well-posedness, the flow of FNLS~\eqref{FNLS} conserves the energy $H(u)$; that is,
\begin{equation} 
H(u)(t)=\frac{1}{2}\int_{\T} \left | (-\dx^2)^{\frac{\al}{2}} u(t,x) \right|^2\,dx \pm \frac{1}{4}\int_{\T}|u(t,x)|^4\,dx=H(u)(0). \label{energy}
\end{equation}
In the defocusing case, the energy
controls the $H^{\al}$-norm and hence energy conservation can be used to globalise (in time) all solutions with regularity at or above the energy space, that is, for when $\s\geq \al$. This result also holds in the focusing case, as the $H^{\al}$-norm can be controlled in terms of both the energy and the mass by using the Gagliardo-Nirenberg inequality 
\begin{align}
\|u\|_{L^4}^{4} \les \| (-\dx^2)^{\tfrac{\al}{2}}u\|_{L^{2}}^{\frac{1}{\al}}\|u\|_{L^2}^{4-\frac{1}{\al}} \label{GNineq}
\end{align}    
for $\al\geq \tfrac 14$.
Below the energy space $H^{\al}(\T)$, global well-posedness of FNLS \eqref{FNLS} (for both defocusing and focusing nonlinearities) in $H^{\s}(\T)$ was obtained for $\s>\tfrac{10\al+1}{12}$~\cite{demirbas2013existence} by using the high-low frequency decomposition of Bourgain~\cite{Bo98}.

In summary, the flow of FNLS~\eqref{FNLS} is well-defined under the following conditions:

\begin{proposition}[Well-posedness of the cubic FNLS~\eqref{FNLS} in $H^{\s}(\T)$~\cite{Bou, Cho, demirbas2013existence, oh2016quasi}] \label{prop: gwp} \
\begin{enumerate}[\normalfont (i)]
\setlength\itemsep{0.3em}
\item Let $\al\geq 1$. Then, the cubic FNLS~\eqref{FNLS} is globally well-posed in $H^{\s}(\T)$ for $\s \geq 0$. 
\item  Let $\tfrac 12<\al <1$.  Then, the cubic FNLS~\eqref{FNLS} is locally well-posed in $H^{\s}(\T)$ for $\s\geq \frac{1-\al}{2}$. Moreover, the cubic FNLS~\eqref{FNLS} is
 globally well-posed for $\s>\frac{10\al+1}{12}$.
\end{enumerate}
\end{proposition}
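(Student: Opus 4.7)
The plan is to assemble this proposition from existing results in the literature together with the contribution of Appendix~\ref{app:gwp}. The ingredients are standard Fourier restriction theory, conservation laws, and Bourgain's high-low method, but the driving estimate is different in each regime, so I sketch them in turn.

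For part~(ii) when $\tfrac12 < \al < 1$, I would follow~\cite{Cho} and set up the Fourier restriction norm space $X^{s,b}$ tailored to the linear propagator $e^{it(-\dx^2)^\al}$. The heart of the matter is the trilinear estimate
\[
\|u_1\cj{u_2}u_3\|_{X^{s,b-1}} \les \prod_{j=1}^{3} \|u_j\|_{X^{s,b}},
\]
valid for $s \ge \tfrac{1-\al}{2}$ and some $b>\tfrac12$. Because dispersion is weaker than in standard NLS, the associated $L^4$-type Strichartz estimate on the curve $\tau = |n|^{2\al}$ loses $\tfrac{1-\al}{2}$ derivatives; this loss is read off from a divisor-type bound on integer solutions of the resonance relation $|n_1|^{2\al}-|n_2|^{2\al}+|n_3|^{2\al}-|n|^{2\al} = O(1)$. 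A Picard iteration in $X^{s,b}$ then yields the local theory.

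For global well-posedness in (ii), the plan splits by regularity. At or above the energy threshold $s \ge \al$, the conserved mass~\eqref{mass} and energy~\eqref{energy}, combined in the focusing case with the Gagliardo--Nirenberg inequality~\eqref{GNineq} (whose hypothesis $\al\ge\tfrac14$ is certainly satisfied here), furnish an \emph{a priori} $H^\al$-bound and hence iteration of the local theory to all times. Below the energy space I would import Bourgain's high-low decomposition~\cite{Bo98} in the form used by~\cite{demirbas2013existence}: split $u_0 = P_{\le N}u_0 + P_{>N}u_0$, evolve the smooth low-frequency piece by the $H^\al$-theory and treat the high-frequency remainder perturbatively, controlling the growth of a modified energy. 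Optimising the cut-off $N$ yields the threshold $s > \tfrac{10\al+1}{12}$.

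For part~(i), the $\al=1$ case is Bourgain's classical $L^2$ result~\cite{Bou} via the sharp periodic $L^4$-Strichartz estimate. For $\al>1$ I would mimic the argument of~\cite[Appendix~A]{oh2016quasi} (which treats $\al=2$): the stronger dispersion makes the periodic $L^4$-Strichartz estimate hold with \emph{no} derivative loss, so the trilinear estimate in $X^{0,b}$ closes and local well-posedness in $L^2(\T)$ follows; mass conservation~\eqref{mass} globalises this, and persistence of regularity extends it to every $s \ge 0$. The main obstacle I anticipate is precisely here: checking that the lattice-point/divisor count underpinning the $L^4$-Strichartz estimate is uniform in $\al$ across the whole range $\al>1$, so that implicit constants and $b$-parameter choices do not degenerate near $\al=1$. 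These technicalities are what is handled in Appendix~\ref{app:gwp}.
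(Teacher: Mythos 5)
Your high-level plan is the right one: the proposition is essentially a survey result, and the only genuinely new piece is global well-posedness for $\al>1$, which is what Appendix~\ref{app:gwp} supplies. The attributions for part~(ii) to~\cite{Cho} (local theory via $X^{s,b}$) and~\cite{demirbas2013existence} (high-low method below $H^{\al}$), and for $\al=1$ to~\cite{Bou}, are correct and match the paper.

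Where your proposal drifts from the paper's actual proof is in the technical mechanism for $\al>1$ and in the obstacle you anticipate. The paper does not prove a periodic $L^4$-Strichartz estimate for the propagator $e^{it(-\dx^2)^{\al}}$, nor does it appeal to a lattice-point/divisor count in Appendix~\ref{app:gwp}. Instead, it first gauges out the mean (passing to~\eqref{gauged}), splits the nonlinearity into a resonant piece $\mathcal{R}$ and a non-resonant piece $\mathcal{N}$, and then proves the trilinear estimate (Proposition~\ref{prop:nonlinest}) by duality and Cauchy--Schwarz directly: after integrating out the modulation variables one is left with the purely arithmetic quantity $\sup_{n,\tau}\sum_{\Gamma(n)}\langle\phi(\cj n)\rangle^{-2b'}$, and this is controlled using the deterministic phase lower bound of Lemma~\ref{lemma: phase lower bound}, $|\phi(\cj n)|\gtrsim |n-n_1||n-n_3| n_{\max}^{2\al-2}$, whose extension from $\tfrac12<\al\le 1$ to $\al>1$ is precisely the content of Appendix~\ref{app:lowerbd}. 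The constraint that actually pins down $\al>1$ is the summability condition $2b'\al>1$ with $b'<\tfrac12$; since the implicit constants are permitted to blow up as $\al\to 1^{+}$ (one fixes $\al$ once and for all), there is no uniformity-in-$\al$ issue to resolve. So you have correctly located \emph{which} regime needs new work, but the key lemma is the phase lower bound (not an $L^4$-Strichartz estimate), and the critical condition is $2b'\al>1$ (not a degeneration of divisor-counting constants near $\al=1$). You also omit the gauge transformation and the resonant/non-resonant decomposition, which are needed because the pure cubic nonlinearity contains a resonant term $|\ft v_n|^2 \ft v_n$ that must be handled separately (and fails to close without removing the conserved-mass shift).
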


In the following, we make no distinction between the defocusing or focusing nature of \eqref{FNLS} and henceforth we assume that \eqref{FNLS} is defocusing. 
For future use, we define $\Phi(t): u_0\in H^{\s}(\T) \mapsto u(t)\in H^{\s}(\T)$ to be the solution map of FNLS~\eqref{FNLS} (when it exists) at time $t$.

\subsection{Main result }
Our goal in this paper is to study the transport property of Gaussian measures on periodic functions under the flow of FNLS~\eqref{FNLS}.
Given $s\in \R$, we define the Gaussian measure $\mu_s$ to be the induced probability measure under the map\footnote{From now on, we drop the factor $2\pi$ as it plays no role in our analysis.}:
\begin{equation}
\omega\in\Omega\longmapsto u^\omega(x)=\sum\limits_{n\in\Z}\frac{g_n(\omega)}{\langle n\rangle^s}e^{inx},
\label{gaussiandata}
\end{equation}
where $\jb{\, \cdot \, }:=(1+|\cdot |^2)^{\frac{1}{2}}$ and $\{g_n\}_{n\in\Z}$ is a sequence of independent standard complex-valued Gaussian random variables, i.e $\textnormal{Var}(g_n)=2$, on a probability space $(\O,\mathcal{F},\mathbb{P})$. Formally, $\mu_s$ has density
\begin{equation*}
d\mu_s=Z^{-1}_se^{-\frac{1}{2}\norm{u}^2_{H^s}}du=Z^{-1}_s\prod\limits_{n\in\Z}e^{-\frac{1}{2}\langle n\rangle^{2s}|\widehat{u}_n|^2}d\widehat{u}_n.
\end{equation*}
A computation shows that the random distribution \eqref{gaussiandata} belongs to $H^{s-\frac{1}{2}-\eps}(\T)$ and not to $H^{s-\frac 12}(\T)$ almost surely. It follows that the Gaussian measure $\mu_s$ is supported on $H^{s-\frac{1}{2}-\eps}(\T)\setminus H^{s-\frac 12}(\T)$ for any $\eps>0$.
 Thus, in order to discuss the transport property of these measures under the flow of \eqref{FNLS}, Proposition~\ref{prop: gwp} restricts us to the range:
\begin{align}
s>\max\bigg( \frac{1}{2}, 1-\frac{\al}{2} \bigg),  \label{soptimal}
\end{align}
which ensures there exists well-defined dynamics within the support of $\mu_s$.
We now state the definition of quasi-invariant measures: given a measure space $(X,\mu)$, we say that $\mu$ is quasi-invariant under a transformation $T:X\rightarrow X$ if the push-forward measure $T_{*}\mu=\mu\circ T^{-1}$ and $\mu$ are mutually absolutely continuous with respect to each other.

The quasi-invariance of Gaussian measures supported on periodic functions under the flow of 4NLS was recently studied by Oh and Tzvetkov~\cite{oh2016quasi} and Oh, Sosoe and Tzvetkov~\cite{OhTzvetSos}. See also Remark~\ref{Remark: Bou, Zhis quasi result} and the recent work~\cite{OhTsutTzvet} on Schr\"odinger-type equations.
Our main goal is to extend these quasi-invariance results to more general values of dispersion $\al$.
Thus, in this direction we establish the following:
\begin{theorem}\label{Main result}
Let $s\in\R$ and $\al >\tfrac{1}{2}$ be such that
\begin{enumerate}[\normalfont (i)]
\setlength\itemsep{0.3em}
\item $s>\max\big( \frac{2}{3},\frac{11}{6}-\al \big)$   if $\al\geq 1$, or    \label{resultals}
\item $s>\frac{10\alpha+7}{12}$ if  $\tfrac{1}{2}<\al<1$. \label{resultals1}
\end{enumerate}
Then, the Gaussian measure $\mu_s$ is quasi-invariant under the flow of the cubic FNLS \eqref{FNLS}. More precisely, given any measurable set $A\subset H^{s-\frac 12-\eps}(\T)$ satisfying   $\mu_{s}(A)=0$, we have $\mu_{s}(\Phi(-t)(A))=0$ for every $t\in \R$.
\end{theorem}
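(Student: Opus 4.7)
The plan is to adapt the Tzvetkov--Oh--Tzvetkov strategy for 4NLS from \cite{oh2016quasi, OhTzvetSos} to the non-polynomial dispersion $|n|^{2\al}$ of \eqref{FNLS}. On the Fourier side, the nonlinearity $|u|^2u$ splits into a ``trivially resonant'' part $2\|u\|_{L^2}^2 u - |\widehat u(n)|^2\widehat u(n)$ plus a strictly off-diagonal remainder. The first summand is removed by the Wick-type gauge
\begin{equation*}
v(t,x) \,:=\, e^{-2it\|u_0\|_{L^2}^2}\,u(t,x),
\end{equation*}
which is a diagonal Fourier rotation and hence preserves $\mu_s$; quasi-invariance for \eqref{FNLS} thus reduces to quasi-invariance for the renormalized equation satisfied by $v$, whose cubic nonlinearity is purely off-diagonal.

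Next, let $\mathbf{P}_{\leq N}$ denote the Fourier projection onto $|n|\leq N$ and let $\Phi^{N}(t)$ denote the flow of the truncated renormalized equation; this flow is Hamiltonian on the low-frequency slice (hence Lebesgue-preserving) and linear on the complementary modes (hence $\mu_s$-preserving). The standard change-of-variables computation of \cite{oh2016quasi} then reduces the proof to establishing uniform-in-$N$ exponential moment bounds with respect to $\mu_s$ for $\int_0^t \frac{d}{d\tau}\|\mathbf{P}_{\leq N}\Phi^{N}(\tau)u\|_{H^s}^2\,d\tau$. A Yudovich-type iteration then gives $\mu_s(\Phi^{N}(t)(A)) \lesssim_t \mu_s(A)^{\theta}$ uniformly in $N$, and a weak-compactness argument combined with Proposition~\ref{prop: gwp} lets us pass to the limit $N\to\infty$ and deduce $\mu_s(A)=0 \Rightarrow \mu_s(\Phi(t)(A))=0$.

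The core of the proof is the modified energy estimate. Differentiating $\|\mathbf{P}_{\leq N}v(t)\|_{H^s}^2$ in time yields a quadrilinear sum over tuples $\overline n=(n_1,n_2,n_3,n_4)$ satisfying $n_1-n_2+n_3-n_4=0$ and $\{n_1,n_3\}\neq\{n_2,n_4\}$, each term oscillating at the phase
\begin{equation*}
\Psi(\overline n) \,:=\, |n_1|^{2\al}-|n_2|^{2\al}+|n_3|^{2\al}-|n_4|^{2\al}.
\end{equation*}
Integration by parts in time (a Poincar\'e--Dulac normal form) transfers a factor $\Psi(\overline n)^{-1}$ into the summand at the cost of sextic correctors and boundary terms. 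The key input is a modulation lower bound on $|\Psi(\overline n)|$ off the trivial resonance, proved by the mean value theorem applied to $x\mapsto |x|^{2\al}$: when $\al\geq 1$, convexity of the phase yields a strong gain of order $|n_{\max}|^{2\al-1}$ per unit of frequency gap, whereas for $\tfrac12<\al<1$ the concavity of $|x|^{2\al-1}$ weakens this gain. Combined with Wiener-chaos/hypercontractivity estimates on the Gaussian random data and a multilinear counting bound over the constraint hyperplane, this produces the required moment estimate.

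The main obstacle is the multilinear analysis in this last step: the polynomial factorizations of the phase available at $\al\in\{1,2\}$ are absent for general $\al$, so one must work throughout with the smooth but non-polynomial function $|x|^{2\al}$. The two regularity thresholds in Theorem~\ref{Main result} reflect precisely the amount of regularity the normal form can recover: for $\al\geq 1$ the strong modulation gain yields the threshold $s>\tfrac{11}{6}-\al$ (with a Sobolev saturation $s>\tfrac23$), while for $\tfrac12<\al<1$ the weaker bound forces the stricter $s>\tfrac{10\al+7}{12}$, with the resonance geometry degenerating as $\al\to\tfrac12^+$.
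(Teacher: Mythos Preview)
Your proposal follows the right general architecture (gauge, truncation, normal form on the phase $\Psi$, pass to the limit), but there is a genuine gap in how you obtain the two regularity thresholds, and your explanation of where they come from is incorrect.

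First, the threshold $s>\tfrac{10\al+7}{12}$ for $\tfrac12<\al<1$ has \emph{nothing} to do with the normal-form energy estimate; it is purely the constraint that $\mu_s$ be supported inside the global well-posedness range of Proposition~\ref{prop: gwp}(ii), namely $s-\tfrac12>\tfrac{10\al+1}{12}$. The energy estimate in the paper actually works for the larger range $s>\min(1,\tfrac{11}{6}-\al)$, which is why there is a separate local-in-time quasi-invariance statement (Theorem~\ref{thm:localqi}) covering those $s$. Your claim that ``the weaker bound forces the stricter $s>\tfrac{10\al+7}{12}$'' misidentifies the obstruction.

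Second, the approach you describe --- exponential moment bounds plus a Yudovich iteration in the style of \cite{tzvetkov2015quasiinvariant,oh2016quasi} --- is what the paper calls Method~2, and the paper explains explicitly (Section~\ref{sec:methods}) that this method does \emph{not} reach the stated thresholds for FNLS: the rigidity in the choice of the conserved-quantity norm $B$ in \eqref{Tzvetkov energy estimate} forces $s>\al+\tfrac12$ when $\tfrac12<\al<1$. The paper instead uses Method~3 of \cite{PVT} (deterministic growth bounds on $\|v\|_{H^{s-1/2-\eps}}$ so one can work inside a fixed ball $B_R$ and apply Gronwall to $\rho_{s,N}(\Phi_N(t)(A))$ directly, with no need to normalise the weighted measure) to get $s>1$ for all $\al>\tfrac12$, and then Method~4 of \cite{hybrid} (an energy estimate with two factors in the stronger Fourier--Lebesgue norm $\FL^{s-\eps,\infty}$, Lemma~\ref{lemma:flpdeviation}, together with the explicit construction of the cut-off weighted Gaussian measure $\rho_{s,r}$ in Proposition~\ref{prop:lpbound}) to push down to $s>\max(\tfrac23,\tfrac{11}{6}-\al)$ when $\al>\tfrac56$. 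Your proposal omits both the $B_R$-localisation and the $\FL^{s-\eps,\infty}$/weighted-measure machinery, so as written it would not attain the threshold in part~(i).

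Finally, your description of the phase lower bound is off: the same bound $|\Psi(\overline n)|\gtrsim |n-n_1||n-n_3|\,n_{\max}^{2\al-2}$ (Lemma~\ref{lemma: phase lower bound}) holds uniformly for all $\al>\tfrac12$; there is no dichotomy between convex and concave regimes at this step.
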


Our proof of quasi-invariance of $\mu_s$ in the case $\tfrac 12<\al<1$ also holds for any 
\begin{align}
\min\bigg( 1, \frac{11}{6}-\al \bigg) <s\leq \frac{10\al+7}{12}. \label{locals}
\end{align}
The restriction in Theorem~\ref{Main result}~\eqref{resultals1} is due to a lack of globally well-defined dynamics within the support of $\mu_s$ for $s$ satisfying \eqref{locals} (see Proposition~\ref{prop: gwp}~(ii)). However, our arguments in this paper allow us to recover the following \emph{local-in-time quasi-invariance} result: 
 
\begin{theorem}[Local-in-time quasi-invariance] \label{thm:localqi}
Let $\frac 12 < \al <1$ and $s$ satisfy \eqref{locals}. Then, for every $R>0$, there exists $T>0$ such that
for every measurable
\begin{align*}
A \subset \{ u\in H^{s-\frac 12 -\eps}(\T)\, : \, \|u\|_{H^{s-\frac 12 -\eps}(\T)}<R \}
\end{align*} 
satisfying $\mu_{s}(A)=0$, we have $\mu_{s}(\Phi(-t)(A))=0$ for every $t\in [-T,T]$.
\end{theorem}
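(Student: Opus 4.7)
The plan is to adapt the machinery developed for Theorem~\ref{Main result}\eqref{resultals1} to a local-in-time setting on the ball $B_R = \{u \in H^{s-\frac12-\varepsilon}(\T) : \|u\|_{H^{s-\frac12-\varepsilon}} < R\}$, replacing the global flow with a local flow of uniform lifespan $T = T(R)$ while reusing the modified-energy estimate in the relaxed regime $s > \min(1, \tfrac{11}{6}-\alpha)$. First, since $s - \tfrac12 - \varepsilon > \tfrac{1-\alpha}{2}$ for every $s$ satisfying \eqref{locals} and every sufficiently small $\varepsilon > 0$, Proposition~\ref{prop: gwp}\eqref{resultals1} produces a local solution map $\Phi(t) : B_R \to H^{s-\frac12-\varepsilon}(\T)$ whose lifespan $T = T(R) > 0$ can be bounded below uniformly for data in $B_R$.

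Next, I would introduce the frequency-truncated equations
\begin{align*}
i \partial_t u_N + (-\partial_x^2)^\alpha u_N = \mathbf{P}_{\leq N}\big( |\mathbf{P}_{\leq N} u_N|^2\, \mathbf{P}_{\leq N} u_N \big),
\end{align*}
which generate a globally defined smooth flow $\Phi_N(t)$ on $H^{s-\frac12-\varepsilon}(\T)$, and recall the modified-energy estimate constructed in the proof of Theorem~\ref{Main result}. As emphasised in the remark just following Theorem~\ref{Main result}, that estimate remains valid in the wider range $s > \min(1, \tfrac{11}{6}-\alpha)$ with constants independent of $N$. Combined with a change-of-variables computation and the Cameron--Martin structure of $\mu_s$, it yields a uniform (in $N$) bound on the Radon--Nikodym derivative of the pushforward $(\Phi_N(t))_\ast \mu_s$ with respect to $\mu_s$ on $B_R$ in some $L^p(d\mu_s)$, valid for all $|t| \leq T(R)$. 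A H\"older estimate then gives $\mu_s(\Phi_N(-t)(A)) \leq C(R,T,p)\, \mu_s(A)^{1-1/p}$ for every measurable $A \subset B_R$.

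To conclude, one passes to the limit $N \to \infty$. The convergence $\Phi_N(t) u_0 \to \Phi(t) u_0$ in $H^{s-\frac12-\varepsilon}(\T)$, uniform in $u_0 \in B_R$ and in $|t| \leq T(R)$, follows from the same Bourgain-space contraction argument that underlies the local well-posedness theory of \cite{Cho}, applied simultaneously to the truncated and untruncated problems on a common lifespan. Combining the limiting estimate with the uniform density bound yields $\mu_s(\Phi(-t)(A)) \leq C(R,T,p)\, \mu_s(A)^{1-1/p}$ for every measurable $A \subset B_R$ and every $|t| \leq T$; in particular, $\mu_s(A) = 0$ implies $\mu_s(\Phi(-t)(A)) = 0$, which is the claimed local-in-time quasi-invariance.

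The main difficulty I anticipate is coordinating the local-well-posedness time $T(R)$ with the quasi-invariance machinery: one must verify that both the truncated flows $\Phi_N$ and the actual flow $\Phi$ share a common lifespan $T(R)$ when restricted to a slightly enlarged ball (say of radius $2R$), that the modified-energy estimate is uniform in $N$ on this enlarged ball, and that the convergence $\Phi_N \to \Phi$ is uniform on $[-T(R),T(R)] \times B_R$. These are routine consequences of the Bourgain-space framework but must be checked in the present setting rather than imported verbatim from the global proof of Theorem~\ref{Main result}.
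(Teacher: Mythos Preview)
Your overall architecture (truncate, establish a uniform-in-$N$ estimate, pass to the limit using $\Phi_N\to\Phi$ on $[-T(R),T(R)]\times B_R$) matches the paper's, but the central step is misdescribed and, as written, does not go through.

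First a minor point: the truncated equation you wrote is the \emph{ungauged} one; the modified energy $E_s$ of Section~\ref{SEC:QI1} is built for the \emph{gauged} truncated flow \eqref{Truncated equation for v}, and quasi-invariance is then transferred back via Lemma~\ref{Gauge transform quasi}. This matters because the resonant contribution only drops out after gauging.

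The substantive gap is your claim that ``a change-of-variables computation and the Cameron--Martin structure of $\mu_s$'' yields a uniform $L^p(\mu_s)$ bound on the Radon--Nikodym density of $(\Phi_N(t))_\ast\mu_s$ with respect to $\mu_s$, from which $\mu_s(\Phi_N(-t)(A))\le C\,\mu_s(A)^{1-1/p}$ would follow. Cameron--Martin concerns shifts by elements of the Cameron--Martin space, not nonlinear flows; controlling the pushforward density in $L^p$ is essentially the content of Ramer's theorem (Method~1 in Subsection~\ref{sec:methods}), which the paper explicitly notes fails for $\alpha\le 1$ because it requires $(1+\varepsilon)$-smoothing of the nonlinear part. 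So this step cannot be carried out as you describe in the regime $\tfrac12<\alpha<1$.

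What the paper does instead is work not with $\mu_s$ directly but with the \emph{modified} measure $d\rho_{s,N}=e^{-\frac12 R_s(\mathbf P_{\le N}u)}\,d\mu_s$ (and, for $s\le 1$, the further weighted version $\rho_{s,N,r}$ of Subsection~\ref{subsec:probmeas}). The change-of-variables formula (Lemma~\ref{lemma:cov}) makes $E_s(\mathbf P_{\le N}\Phi_N(t)u)$ appear in the exponent, and the modified-energy estimate (Proposition~\ref{Prop: Energy Estimate} for $s>1$, Proposition~\ref{prop:weakenergyest} for $s\le 1$) then yields either a Gronwall bound $\rho_{s,N}(\Phi_N(t)(A))\le e^{C(R)t}\rho_{s,N}(A)$ or a Yudovich-type inequality $\tfrac{d}{dt}\rho_{s,N,r}(\Phi_N(t)(A))\le C p\,\rho_{s,N,r}(\Phi_N(t)(A))^{1-1/p}$. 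Only \emph{after} concluding quasi-invariance for $\rho_s$ does one pass to $\mu_s$, using that $e^{-\frac12 R_s(u)}$ is finite and nonzero on $B_R$ (Proposition~\ref{Prop: difference estimate for R_s(v)} or Proposition~\ref{prop:lpbound}). You should replace your Cameron--Martin step with this modified-measure argument; the local-in-time restriction then enters exactly where you anticipated, through Proposition~\ref{Prop: LWP growth bound} in place of Proposition~\ref{Prop: GWP growth bound}.
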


We conclude this subsection with a few remarks.

\begin{remark}\rm
We note that any lowering of the global well-posedness regularity threshold, to say $\s_{0}$, for the FNLS~\eqref{FNLS} as stated in Proposition~\ref{prop: gwp} (ii), will immediately imply a corresponding improvement to Theorem~\ref{Main result} \eqref{resultals1} and Theorem~\ref{thm:localqi}. That is, we can `upgrade' from local-in-time quasi-invariance to quasi-invariance as in Theorem~\ref{Main result}, provided 
\begin{align*}
\s_{0}-\frac{1}{2}>\min\bigg( 1, \frac{11}{6}-\al \bigg).
\end{align*}
 This should be contrasted with the local-in-time quasi-invariance result in~\cite[Theorem 1.5]{PVT} for the focusing quintic NLS on $\T$. In that setting, a global flow does not exist in view of the
  presence of finite-time blow-up solutions (see for example~\cite{Ogawa}). Thus, it is impossible to remove the `local-in-time' restriction. 
\end{remark}

\begin{remark}\label{Remark: Bou, Zhis quasi result} \rm
In the works of Bourgain~\cite{Bo94} and Zhidkov~\cite{Zhid}, it was proven that for each $k\in \mathbb{N}$, the NLS has an invariant weighted Gaussian measure $\rho_k$ which is mutually absolutely continuous with the Gaussian measure $\mu_k$. Thus, the invariance of the measures $\rho_{k}$ imply the quasi-invariance of the Gaussian measures $\mu_k$ for each $k\in \mathbb{N}$. See also \cite{OTintro} for further discussion.
Theorem~\ref{Main result} extends these results to quasi-invariance of Gaussian measures $\mu_{s}$ under the flow of NLS to (not necessarily integer) regularities $s>\tfrac{5}{6}$. 
 \end{remark}

\subsection{Methodology and discussion} \label{sec:methods} 

Theorem~\ref{Main result} is an addition to a recent program initiated by Tzvetkov~\cite{tzvetkov2015quasiinvariant} based on understanding the role dispersion has on the transport properties of Gaussian measures under the flow of nonlinear Hamiltonian PDEs; see also~\cite{oh2016quasi, oh2017quasi, OhTzvetSos, OhTsutTzvet, PVT, hybrid}.
Within the context of abstract Wiener space, the classical work of Ramer~\cite{ramer} (see also \cite{Cameron}) studied the quasi-invariance of Gaussian measures under general nonlinear transformations. In the setting of $d$-dimensional nonlinear Hamiltonian PDEs, Ramer's result can be interpreted as requiring a $(d+\eps)$-degree of smoothing on the nonlinear part of the flow generated by a given PDE~\cite{tzvetkov2015quasiinvariant}. Recently, Tzvetkov~\cite{tzvetkov2015quasiinvariant} introduced a general methodology for proving quasi-invariance of Gaussian measures under the flow of nonlinear Hamiltonian PDE which does not appeal to Ramer's result. In that paper, Tzvetkov studied the generalised BBM equation and crucially made use of the explicit smoothing present on the nonlinearity to prove quasi-invariance of Gaussian measures $\mu_{s}$. However, in our case of FNLS~\eqref{FNLS}, there is no explicit smoothing on the nonlinearity. To overcome this and reveal the necessary smoothing, we employ gauge transformations and normal form reductions on~\eqref{FNLS} (see Section~\ref{section:gauge}).

Our proof of Theorem~\ref{Main result} and Theorem~\ref{thm:localqi} is split into two parts. In the first, we employ the recent argument in~\cite{PVT} (see Method 3 below) to obtain quasi-invariance for all $\al>\tfrac 12$, for some range of regularities $s$. We then apply the argument in~\cite{hybrid} (see Method 4 below) to improve upon the previous regularity restriction for $\al>\tfrac 56$.
We now go over the recent developments in the study of quasi-invariance of Gaussian measures under the flow of nonlinear Hamiltonian PDEs.
  Note that, in order to rigorously justify the methods below, it is necessary to consider a suitably truncated version of \eqref{FNLS} (see for example \eqref{Truncated equation for v} and \eqref{truncated w equation}). Furthermore, for the sake of discussion, we restrict to the one-dimensional case.

\medskip

\noi
$\bullet $ \textbf{Method 1: (`Ramer's argument')}
The first method is to directly verify the hypothesis of Ramer's result \cite{ramer} on quasi-invariance of Gaussian measures under general nonlinear transformations. In the one-dimensional context, this essentially reduces to demonstrating a $(1+\eps)$-degree of smoothing for the nonlinear part of the flow. This approach was applied in \cite{tzvetkov2015quasiinvariant, oh2016quasi, OhTsutTzvet}. 

\medskip

\noi
\underline{\textbf{`Energy methods:'}}

\medskip
\noi
$\bullet $ \textbf{Method 2: } 
Introduced by Tzvetkov~\cite{tzvetkov2015quasiinvariant}, the second method involves both nonlinear PDE techniques and stochastic analysis. We give an overview of the method here; see also \cite{OTintro}.
Let $\Psi$ be the flow of a given PDE.
 Given a measurable set $A\subset H^{s-\frac{1}{2}-\eps}(\T)$ satisfying $\mu_{s}(A)=0$, we aim to show 
\begin{align}
\mu_{s}(\Psi(t)(A))=Z_{s}^{-1}\int_{\Phi(t)(A)}e^{-\frac{1}{2}\|u\|_{H^{s}}^{2}}du=0 \,\,\, \text{for all} \,\, t\in \R,\label{push0}
\end{align}
 by obtaining a differential inequality of the form 
\begin{align}
\frac{d}{dt}\mu_{s}(\Psi(t)(A)) \leq Cp^{\beta} \{ \mu_{s}(\Psi(t)(A))\}^{1-\frac{1}{p}}, \label{ODE}
\end{align}
where $0\leq \beta \leq 1$ and $p<\infty$. Then, applying Yudovich's argument (or a variant of, see \cite{tzvetkov2015quasiinvariant, oh2017quasi}) to \eqref{ODE} implies \eqref{push0} for small times. The argument can then be iterated to give \eqref{push0} for all times. 
Thus, matters reduce to obtaining \eqref{ODE}. 
By Liouville's theorem and the bijectivity of the flow $\Psi$, we have the following `change of variables' formula (Lemma~\ref{lemma:cov}): 
\begin{align*}
\mu_{s}(\Psi(t)(A))=Z_{s}^{-1}\int_{A}e^{-\frac{1}{2}\|\Psi(t)u\|_{H^{s}}^{2}}du \,\,\, \text{for all} \,\, t\in \R.
\end{align*}
Taking a time derivative, evaluating at a fixed $t_0\in \R$ and using the group property of the flow, $\Psi(t+t_0)=\Psi(t)\Psi(t_0)$, we obtain
\begin{align}
\frac{d}{dt} \mu_{s}(\Psi(t)(A))\bigg\vert_{t=t_0} & = -\frac{1}{2} Z^{-1}_{s}\int_{\Psi(t_0)(A)} \frac{d}{dt}\Big( \|\Psi(t)(u)\|_{H^{s}}^{2} \Big) e^{-\frac{1}{2}\|\Psi(t)u\|_{H^{s}}^{2}}  \bigg\vert_{t=0}du \notag \\
& \leq C\bigg\| \frac{d}{dt}\Big( \|\Phi(t)(u)\|_{H^{s}}^{2} \Big)\bigg\vert_{t=0} \bigg\|_{L^{p}(\mu_{s})}\{ \mu_{s}(\Psi(t_0)(A))\}^{1-\frac{1}{p}}. \label{ODE1}
\end{align}
Thus, we are lead to the following energy estimate (with smoothing):
\begin{align}
\frac{d}{dt}\Big( \|\Psi(t)(u)\|_{H^{s}}^{2} \Big)\bigg\vert_{t=0} \leq C(\|u\|_{B})\norm{u}_{X}^\ta, \label{EEST1}
\end{align}
where $\ta \leq 2$.
Here, we have the freedom to choose the $X$-norm above provided it captures the regularity of the random distribution \eqref{gaussiandata} almost surely; for example, we may take $X=H^{s-\frac 12-\eps}(\T)$, the Bessel potential space $W^{s-\frac 12-\eps,\infty}(\T)$ or the Fourier-Lebesgue space $\FL^{s-\eps,\infty}(\T)$ (see~\eqref{flpspace}). 
 On the other hand,  we must choose the weaker $B$-norm so that it can be controlled in terms of conserved quantities of the given PDE. The inequality \eqref{ODE} then follows from~\eqref{ODE1}, \eqref{EEST1} and estimates on higher moments of Gaussian random random variables (see \eqref{LPgauss}).
Indeed, the reduction to time $t=0$ in the above analysis allows us to use stochastic tools on the explicit random distribution \eqref{gaussiandata}. 

For the generalised BBM equation, Tzvetkov~\cite{tzvetkov2015quasiinvariant} was able to obtain a suitable energy estimate of the form \eqref{EEST1}. Unfortunately, for general dispersive PDE such an estimate does not always hold. The key modification is to instead consider a `modified energy' $E$ of the form
\begin{align*}
E(u)=\|u\|_{H^{s}}^{2}+\text{correction terms}
\end{align*}
 and obtain the following estimate (with smoothing):
\begin{equation}\label{Tzvetkov energy estimate}
\left|\frac{d}{dt}E(\Psi(t)u) \Big\vert_{t=0} \right|\leq C(\norm{u}_{B})\norm{u}_{X}^\ta.
\end{equation}
Now, provided we show the measure $\rho_{s}$ with density
\begin{align}
d\rho_{s}=Z_{s}^{-1}e^{-E(u)}du \label{RHOs}
\end{align}
can be normalised into a probability measure, we can repeat the above argument for $\rho_{s}$ and conclude the quasi-invariance of $\rho_{s}$ under the flow $\Psi$.
Finally, we appeal to the mutual absolute continuity of $\rho_{s}$ and $\mu_{s}$ to conclude the quasi-invariance for $\mu_{s}$ under the flow $\Psi$.
To summarise, Method 2 requires two crucial ingredients: (i) a modified energy estimate of the form \eqref{Tzvetkov energy estimate} and (ii) the construction of the weighted Gaussian measure $\rho_s$ in \eqref{RHOs}. 

\medskip

\noi
$\bullet $ \textbf{Method 3:} 
Introduced by Planchon, Tzvetkov and Visciglia~\cite{PVT}, where they studied the quasi-invariance of Gaussian measures under the flow of the (super-)quintic NLS on $\T$, the third approach is similar in spirit to Method 2. 
  The fundamental feature of this method is the use of deterministic growth bounds on the  $H^{s-\frac{1}{2}-\eps}$-norm  of solutions (see Proposition~\ref{Prop: LWP growth bound}), so that the analysis can be restricted to a closed ball $B_R \subset H^{s-\frac 12-\eps}(\T)$. The benefit of this idea over Method 2 is that we require a softer energy estimate:
  \begin{equation}
\left|\frac{d}{dt}E(\Psi(t)u)\right|\leq C\big(1+\norm{\Psi(t)u}_{H^{s-\frac 12-\eps}}^{k}\big) 
\label{PVT energy estimate}
\end{equation}
for some $k\geq 0$. 
We then use \eqref{PVT energy estimate} and the growth bound on solutions to show, for any $A\subset B_R$ with $\mu_{s}(A)=0$, we have 
\begin{align*}
\frac{d}{dt}\tilde{\rho}_{s}(\Psi(t)(A))\leq C(R,T)^{k} \tilde{\rho}_{s}(\Psi(t)(A)) \quad \text{for all} \,\,\, t \in [0,T],
\end{align*}
where $\tilde{\rho}_{s}$ is the measure with density
\begin{align*}
d\tilde{\rho}_{s}=e^{-E(u)}du.
\end{align*} 
 Gronwall's inequality and soft arguments then imply 
$\tilde{\rho}_{s}(\Psi(t)(A))=0 $
and hence $\mu_{s}(\Psi(t)(A))=0
$ for every $A\subset B_R$. We then take $R\rightarrow \infty$ to obtain the quasi-invariance of $\mu_s$ under the flow $\Psi$.
Two further differences to Method 2 are: (i) there is no need to reduce to time $t=0$ to access stochastic tools and (ii) we do not need to normalise the measure $\tilde{\rho}_{s}$.
Notice that the above argument works even when the flow is only locally-in-time well-defined, which leads to a local-in-time quasi-invariance result as in Theorem~\ref{thm:localqi} (see also \cite[p. 28]{BouSurvey}).

\medskip

\noi
$\bullet $ \textbf{Method 4: } 
 This approach combines aspects of Methods 2 and 3 and was introduced by Gunaratnam, Oh, Tzvetkov and Weber~\cite{hybrid} for handling the cubic nonlinear wave equation (NLW) on $\T^3$. 
 Namely, by arguing locally within $H^{s-\frac{1}{2}-\eps}(\T)$ and returning the analysis to time $t=0$, one needs an even softer energy estimate taking the following form:
\begin{equation}\label{hybrid energy estimate}
\left|\frac{d}{dt} E(\Psi(t)v)\Big\vert_{t=0} \right|\leq C(\norm{v}_{H^{s-\frac 12 -\eps}})\norm{v}_{X}^\ta,
\end{equation}
for $\ta \leq 2$ and where the $X$-norm may be chosen as in Method 2. Analogously to Method 2, we must also construct a suitable auxiliary probability measure adapted to the modified energy.

\medskip

In the following, we survey how each of the above methods may be implemented within the context of FNLS~\eqref{FNLS}.

In practice, Method 1 is often less applicable than Methods 2, 3 and 4 as it requires a $(1+\eps)$-amount of smoothing on the nonlinear part of the flow. In our situation of FNLS~\eqref{FNLS} with $\al\geq 1$, we could demonstrate $(2\al-1)$-degrees of nonlinear smoothing, provided $s>1$. Hence, Method 1 yields quasi-invariance of Gaussian measures $\mu_{s}$ under the flow of \eqref{FNLS}, provided that $\al>1$ and $s>1$. For FNLS~\eqref{FNLS} with $\tfrac{1}{2}<\al\leq 1$, a $(1+\eps)$-degree of nonlinear smoothing is not expected~\cite{EGT} and thus we do not know at this point if Ramer's argument can be applied in this case. 
In addition, Methods 2, 3 and 4 usually give lower regularity restrictions compared to using Ramer's argument (e.g.~\cite{OhTzvetSos}). We found this to indeed be the case for FNLS~\eqref{FNLS}.
For this reason, we do not present Method 1 here. 

For application to FNLS~\eqref{FNLS}, it turns out that Method 3 gives an improved result in terms of regularity over Method 2. Indeed, in terms of the energy estimate itself, the rigidity in the choice of the $B$-norm in \eqref{Tzvetkov energy estimate} leads to far less flexibility compared to the energy estimate~\eqref{PVT energy estimate} in Method 3. 
 In the regime $\tfrac{1}{2}<\al<1$, we established an energy estimate of the form \eqref{Tzvetkov energy estimate} with $B=H^{\al}(\T)$ and $X=H^{s-\frac{1}{2}-\eps}(\T)$. Thus, energy conservation \eqref{energy} immediately places the regularity restriction $s>\al+\frac{1}{2}$, in this use of Method 2. This restriction is unnatural since it goes against our intuition that greater dispersion gives a lower regularity threshold. In this paper, we use Method 3 which allows us to remove the restrictions coming from using conservation laws and thus lower the regularity threshold.
 
We now describe our application of Method 3. 
The main goals are to establish (i) a suitable modified energy (see \eqref{modifiedenergy}) and (ii) a corresponding energy estimate of the form \eqref{PVT energy estimate} (see Proposition~\ref{Prop: Energy Estimate}). For this purpose, we apply gauge transformations to convert \eqref{FNLS} into a form more amenable to apply the normal form reductions used to define the modified energy (see Sections~\ref{section:gauge} and~\ref{SEC:QI1}). In this approach, the phase function 
\begin{align}
\phi(\cj n)=|n_1|^{2\al}-|n_2|^{2\al}+|n_3|^{2\al}-|n|^{2\al} \label{p1}
\end{align}
naturally arises as the source of dispersion. In order to exploit this for a smoothing benefit, which is required to achieve (ii), we crucially rely on the following lower bound: for $\al>\tfrac 12$, we have
\begin{align*}
\left|\phi(\overline{n})\right|
& \gtrsim |n-n_1||n-n_3|n_{\max}^{2\alpha-2} \qquad \text{when} \qquad n=n_1-n_2+n_3.
\end{align*}
Here, $n_{\max}:=\max( |n_1|,|n_2|,|n_3|,|n|)$.
This lower bound first appeared in the setting $\frac{1}{2}<\al\leq 1$ in \cite{demirbas2013existence}. 
With minor modifications, its proof extends easily to the case $\al>1$; see Lemma \ref{lemma: phase lower bound} and Appendix~\ref{app:lowerbd}. It can be viewed as a replacement of the explicit factorisations available for the phase function \eqref{p1} of NLS 
\begin{align*}
\phi(\cj n)=n_1^{2}-n_2^2+n_3^2-n^2=-2(n-n_1)(n-n_3)  \qquad \text{when }\, n=n_1-n_2+n_3,
\end{align*}
and 4NLS (see \cite[Lemma 3.1]{oh2016quasi}). Following the argument in \cite{PVT}, we obtain quasi-invariance of Gaussian measures $\mu_{s}$ under the flow of \eqref{FNLS} for
\begin{align}
\textup{(i)} \,\, s>1, \,\, \text{when  } \frac 12 <\al\leq 1, \,\,\,  \text{and} \,\,\, \textup{(ii)} \,\, s>s_{\al},\,\, \text{for some } s_{\al}\leq 1, \,\, \text{when  } \al>1.
\label{M3REG}
\end{align}
See \eqref{highalpha} for a precise statement of $s_{\al}$.

Our next goal is to attempt to lower the regularity restriction from Method 3 by using Method 4.
This requires us to construct a suitable weighted Gaussian measure (Subsection~\ref{subsec:probmeas}) and establish an effective energy estimate of the form~\eqref{hybrid energy estimate}. In establishing the energy estimate, we have some freedom in the choice of the $X$-norm. One choice is the H\"{o}lder-Besov norm as used in~\cite{hybrid}. Since we work intimately on the Fourier side, we use the {\it Fourier-Lebesgue} $X=\mathcal{F}L^{\s, \, \infty}(\T)$-norm for $\s<s$. Here, given $q\geq 1$ and $s\in \R$, the Fourier-Lebesgue $\FL^{s,q}(\T)$-norm is defined by: 
 \begin{align} \label{flpspace}
\| f\|_{\FL^{s,q}(\T)}:=\| \jb{n}^{s}\ft f(n)\|_{\l^{q}(\Z)}.
\end{align}
It is easy to check that the random distribution in \eqref{gaussiandata} belongs almost surely to $\FL^{\s , \, \infty}(\T)$ for any $\s<s$ (Lemma~\ref{lemma:flpdeviation}). Moreover, H\"{o}lder's inequality implies the embedding
\begin{align}
H^{s-\frac 12-\eps}(\T) \supset \FL^{\s,\infty}(\T) \label{FLembed}
\end{align} 
for $\s$ sufficiently close to $s$.
This fact allows us to further relax the energy estimate we obtained in Method 3 (see Proposition~\ref{prop:weakenergyest}). 
We then follow the argument in \cite{hybrid} to conclude quasi-invariance of Gaussian measures $\mu_{s}$ under the flow of FNLS~\eqref{FNLS} for the following regularities:
\begin{align}
\begin{split}
\textup{(i)}& \,\,\max\bigg( \frac{2}{3},\frac{11}{6}-\al \bigg)<s\leq 1,\,\, \text{when  } \al\geq 1 \,\,\, \text{and}\\
\textup{(ii)}& \,\, \frac{11}{6}-\al <s\leq 1 , \,\, \text{when  } \frac 56 <\al< 1. 
\label{M4REG}
\end{split}
\end{align}
Notice that in \eqref{M4REG}, we improve the regularity restriction \eqref{M3REG} we obtained using Method 3 only when $\al>\tfrac 56$. The reason for this is that our use of the stronger $\FL^{\s,\infty}$-norm in the energy estimate for Method 4 (Proposition~\ref{prop:weakenergyest}) yields a regularity gain over the energy estimate in Method 3 (Proposition~\ref{Prop: Energy Estimate}) provided $\al>\tfrac 56$. 
Furthermore, the upper bound $s\leq 1$ in \eqref{M4REG} is necessary for our construction of the weighted Gaussian measure.

\begin{remark}\rm 
When $\al \geq \tfrac{7}{6}$, the regularity restriction in Theorem~\ref{Main result} \eqref{resultals} achieves the largest range of $s>\frac{2}{3}$. In particular, when $\al=2$, this improves upon the result in \cite{oh2016quasi} of $s>\tfrac{3}{4}$. However, as remarked in \cite{OhTsutTzvet}, this same result of $s>\tfrac{2}{3}$ for 4NLS could be obtained by using Method 1 and an additional novel gauge transformation introduced in that same paper. 
For FNLS~\eqref{FNLS} with $\al>1$ (and large enough), we expect the optimal result $s>\tfrac 12$ could be obtained by using a finer modified energy arising from an infinite sequence of normal form reductions. See \cite{OhTzvetSos} where this approach led to the optimal result for 4NLS.

Recently, Oh, Tzvetkov and Wang~\cite{OTW} established the invariance of the (Gibbs-type) measure $\mu_{0}$ under the flow of the (renormalised) 4NLS; namely \eqref{Equation for v} for $\al=2$. Their analysis seems to also extend to the (renormalised) FNLS~\eqref{Equation  for v} for some $1<\al<2$. Thus in the presence of naturally associated invariant measures, quasi-invariance may persist for some regularities below what \eqref{soptimal} suggests.
\end{remark}

\begin{remark}\rm
In Theorem~\ref{Main result} and Theorem~\ref{thm:localqi}, we studied the quasi-invariance of Gaussian measures under the flow of FNLS~\eqref{FNLS} for $\al>\tfrac 12$.
A natural question would be to study the transport property of Gaussian measures under the flow of FNLS~\eqref{FNLS} with $\al=\tfrac 12$. We recall this corresponds to the non-dispersive half-wave equation
  \begin{align}
i\dt u + |\dx|u= |u|^{2}u. \label{halfwave}
\end{align}
 In this case, we expect the Gaussian measures to not be quasi-invariant under the flow of \eqref{halfwave}.   
   To give some credence to this, we observe that it was shown in~\cite{OhTzvetSos} that Gaussian measures $\mu_{s}$ are not quasi-invariant under the flow of the \emph{dispersionless} equation
   \begin{align}
i\dt u=|u|^{2}u. \label{displess}
\end{align}
  Given a solution $u$ to \eqref{displess}, the change of variables 
\begin{equation*}
u(t,x)\longmapsto u(t,x-t)
\end{equation*} 
 implies
$\mu_s$ is not quasi-invariant under the flow of 
\begin{equation*}
i\partial_t u+i\partial_xu=|u|^2u \label{transporteq}
\end{equation*}
which closely resembles \eqref{halfwave}. 
The proof of the non quasi-invariance under \eqref{displess} in~\cite{OhTzvetSos} heavily makes use of the explicit solution formula  
\begin{equation*}
u(t,x)=e^{-it|u(0,x)|^2}u(0,x).
\end{equation*}
Unfortunately, there is no such solution formula for the half-wave equation~\eqref{halfwave}. Hence, at this point, we do not know how to conclude non quasi-invariance of Gaussian measures under the flow of \eqref{halfwave}. It would also be of interest to consider the transport properties of Gaussian measures under the flow of other dispersionless PDE, such as the cubic Sz\"{e}go equation:
\begin{align*}
i\dt u= \P_{\geq 0}(|u|^2 u),
\end{align*}
where $\P_{\geq 0}$ is the projection onto non-negative frequencies $\{ n\, : n \geq 0\}$. See~\cite{Gerard} and the references therein for more on the cubic Sz\"{e}go equation.
\end{remark}

\begin{remark}\rm
It would be of interest to study how our approach in this paper may extend to higher order nonlinearities, say, for the quintic nonlinearity $|u|^{4}u$. 
For instance, the relevant phase function is now 
\begin{align}
|n_1|^{2\al}-|n_2|^{2\al}+|n_3|^{2\al}-|n_4|^{2\al}+|n_5|^{2\al}-|n|^{2\al}, 
\label{Quintic1}
\end{align}
which is restricted to the hyperplane $n_1-n_2+n_3-n_4+n_5=n$. When $\al=1$, there is no factorisation for \eqref{Quintic1}. Therefore, an appropriate analogue of Lemma~\ref{lemma: phase lower bound} is not clear (and likewise for Lemma~\ref{Lemma: DMVT application}). However, even if such results were proved, the formulation of the modified energies and the appropriate nonlinear estimates would still have to be verified. We note that the method in \cite{PVT} introduces a modified energy functional which is not derivable from differentiation by parts, and hence their analysis is not based on factorisations of the phase function \eqref{Quintic1}.
\end{remark}

\section{Preliminary estimates}
In this section we record some elementary estimates that will be useful in the coming analysis. 
The first result we need is the double mean value theorem (DMVT) from~\cite[Lemma 2.3]{CKSTT}.

\begin{lemma}[DMVT]\label{DMVT}
Let $\xi,\eta,\lambda\in \R$ and $f\in C^2(\R)$. Then, we have
\begin{equation*}
f(\xi+\eta+\lambda)-f(\xi+\eta)-f(\xi+\lambda)+f(\xi)=\lambda\eta\int_0^1\int_0^1f''(\xi+s\lambda+t\eta)\,dsdt.
\end{equation*}
\end{lemma}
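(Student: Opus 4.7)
The plan is to reduce the four-term expression on the left-hand side to a double integral by applying the fundamental theorem of calculus twice, each time introducing one of the two auxiliary variables $s, t$. Since $f \in C^{2}(\mathbb{R})$, everything in sight is justified by standard one-variable calculus and Fubini's theorem, so no delicate analytic input is required.

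Concretely, I would first set $g(s) := f(\xi + s\lambda + \eta) - f(\xi + s\lambda)$ for $s \in [0,1]$. Observe that
\begin{equation*}
g(1) - g(0) = f(\xi+\eta+\lambda) - f(\xi+\lambda) - f(\xi+\eta) + f(\xi),
\end{equation*}
which is precisely the left-hand side of the claimed identity. By the fundamental theorem of calculus applied to $g$, this difference equals $\int_{0}^{1} g'(s)\, ds$, and differentiating under the outer shift gives $g'(s) = \lambda\bigl[f'(\xi + s\lambda + \eta) - f'(\xi + s\lambda)\bigr]$.

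Next, I would apply the fundamental theorem of calculus once more, this time to the map $t \mapsto f'(\xi + s\lambda + t\eta)$ on $[0,1]$, to write
\begin{equation*}
f'(\xi + s\lambda + \eta) - f'(\xi + s\lambda) = \eta \int_{0}^{1} f''(\xi + s\lambda + t\eta)\, dt.
\end{equation*}
Combining the two identities and invoking Fubini's theorem (valid since $f''$ is continuous on the compact set traced out by the parameters) yields the claimed double-integral representation. The whole argument is essentially bookkeeping; there is no real obstacle beyond keeping track of which variable absorbs the increment $\lambda$ and which absorbs $\eta$ at each step.
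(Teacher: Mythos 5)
Your proof is correct, and it is the standard argument: apply the fundamental theorem of calculus twice, once in each auxiliary variable, and note that continuity of $f''$ justifies the interchange of integrals. The paper does not supply a proof of this lemma (it simply cites \cite[Lemma 2.3]{CKSTT}), so there is nothing to compare against; your derivation is exactly the expected one.
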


We have the following consequence of DMVT:

\begin{lemma}\label{Lemma: DMVT application}
Fix $s> 1$ and let $n_1,n_2,n_3,n\in \Z$ be such that $n=n_1-n_2+n_3$. Then, we have
\begin{equation*}
\left |\jb{n_1}^{2s}-\jb{n_2}^{2s}+\jb{n_3}^{2s}-\jb{n}^{2s}  \right|\lesssim |n-n_1| |n-n_3|\jb{n_{\max}}^{2s-2},
\end{equation*}
where $n_{\max} = \max(|n_1|,|n_2|,|n_3|,|n|)$ and the implicit constant depends only on $s$.
\end{lemma}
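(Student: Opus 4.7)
The plan is to apply the double mean value theorem (Lemma 2.1 above) to the function $f(y) = \jb{y}^{2s} = (1+y^2)^{s}$ with variables chosen to exploit the relation $n = n_1 - n_2 + n_3$. The key identity is that this relation can be rewritten as $n_1 = n_2 + (n - n_3)$, $n_3 = n_2 + (n - n_1)$, and $n = n_2 + (n - n_1) + (n - n_3)$. Accordingly, I would set $\xi = n_2$, $\eta = n - n_3$, $\lambda = n - n_1$, so that the four quantities $f(\xi)$, $f(\xi+\eta)$, $f(\xi+\lambda)$, $f(\xi+\eta+\lambda)$ are precisely $\jb{n_2}^{2s}$, $\jb{n_1}^{2s}$, $\jb{n_3}^{2s}$, $\jb{n}^{2s}$.

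Next, I would apply Lemma 2.1 to obtain
\begin{equation*}
\jb{n}^{2s} - \jb{n_1}^{2s} - \jb{n_3}^{2s} + \jb{n_2}^{2s} = (n-n_1)(n-n_3) \int_0^1\!\!\int_0^1 f''\bigl(n_2 + \sigma(n-n_1) + \tau(n-n_3)\bigr)\, d\sigma\, d\tau,
\end{equation*}
which up to an overall sign is the quantity to be estimated. The remaining task is to control $f''$ uniformly on the segment traced out by the double integral.

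A direct computation gives $f''(y) = 2s(1+y^2)^{s-2}\bigl(1 + (2s-1)y^2\bigr)$, from which the pointwise bound $|f''(y)| \lesssim_s \jb{y}^{2s-2}$ follows. Since $s > 1$, the exponent $2s - 2$ is positive, so $\jb{\,\cdot\,}^{2s-2}$ is monotone in $|\,\cdot\,|$. It therefore suffices to bound the argument of $f''$ by a multiple of $n_{\max}$; but for $\sigma, \tau \in [0,1]$ the triangle inequality yields
\begin{equation*}
|n_2 + \sigma(n-n_1) + \tau(n-n_3)| \leq |n_2| + |n-n_1| + |n-n_3| \leq 2|n| + |n_1| + |n_2| + |n_3| \lesssim n_{\max}.
\end{equation*}
Combining these ingredients, taking absolute values, and pulling the supremum of $|f''|$ out of the double integral produces the claimed bound $|n-n_1||n-n_3|\jb{n_{\max}}^{2s-2}$.

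There is no genuine obstacle here: the proof is essentially a one-line application of DMVT once the correct assignment of variables is made. The only point requiring any care is the hypothesis $s > 1$, which is used to ensure $2s - 2 \geq 0$ so that the monotonicity step $\jb{y}^{2s-2} \leq \jb{n_{\max}}^{2s-2}$ is valid; if $s \leq 1$, the bound on $f''$ would instead scale with a negative power and the lemma as stated would fail.
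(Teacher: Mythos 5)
Your proof is correct and takes essentially the same approach as the paper, which simply cites DMVT with the variable assignment $n_1=\xi+\eta+\lambda$, $n_2=\xi+\eta$, $n=\xi+\lambda$, $n_3=\xi$; your assignment ($\xi=n_2$, $\eta=n-n_3$, $\lambda=n-n_1$) is equivalent up to an overall sign and produces the same factor $(n-n_1)(n-n_3)$. The supporting details you supply — the computation of $f''$, the bound $|f''(y)|\lesssim_s\jb{y}^{2s-2}$, the monotonicity from $s>1$, and the control of the argument by $n_{\max}$ — are exactly what the paper's one-line proof leaves implicit.
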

\begin{proof}
This is a simple application of DMVT upon setting $n_1=\xi+\eta+\lambda$, $n_2=\xi+\eta$, $n=\xi+\lambda$ and $n_3=\xi$.
\end{proof}

The next lemma states a crucial lower bound on the phase function $\phi(\cj n)$ of \eqref{p1} which we use repeatedly throughout. It was proved for the case $\tfrac 12 < \al \leq 1$ in \cite{demirbas2013existence}.  Their proof easily extends to the case $\al>1$; see Appendix~\ref{app:lowerbd}.
\begin{lemma}\label{lemma: phase lower bound}
Fix $\alpha > \tfrac{1}{2}$ and let $n_1,n_2,n_3,n\in \Z$ be such that $n=n_1-n_2+n_3$. Then, we have
\begin{align*}
\left|\phi(\overline{n})\right|&\gtrsim |n-n_1||n-n_3|\left(  |n-n_1|+|n-n_3|+|n|\right)^{2\alpha-2} \\
& \gtrsim |n-n_1||n-n_3|n_{\textup{max}}^{2\alpha-2}
\end{align*}
where $n_{\max} = \max(|n_1|,|n_2|,|n_3|,|n|)$ and the implicit constant depends only on $\alpha$.
\end{lemma}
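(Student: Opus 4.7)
The plan is to apply the double mean value theorem (Lemma~\ref{DMVT}) to $f(x)=|x|^{2\alpha}$, converting $\phi(\overline{n})$ into an integral, and then to establish a uniform lower bound on that integral. Since we are in the regime $\alpha>1$ not covered by~\cite{demirbas2013existence}, $f\in C^{2}(\R)$ with $f''(x)=2\alpha(2\alpha-1)|x|^{2\alpha-2}$ continuous on all of $\R$ (thanks to $2\alpha-2>0$), so Lemma~\ref{DMVT} applies without subtlety. Using the hyperplane relation $n_2=n_1+n_3-n$, I set $\xi=n_2$, $\eta=n-n_3$, $\lambda=n-n_1$, so that $\xi+\eta=n_1$, $\xi+\lambda=n_3$ and $\xi+\eta+\lambda=n$. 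Writing $a:=n-n_1$ and $b:=n-n_3$, the DMVT will yield
\begin{equation*}
|\phi(\overline{n})|\;=\;2\alpha(2\alpha-1)\,|a|\,|b|\int_{0}^{1}\!\!\int_{0}^{1}|g(s,t)|^{2\alpha-2}\,ds\,dt,
\end{equation*}
where the affine function $g(s,t):=n_{2}+sa+tb$ on $[0,1]^{2}$ attains each of the values $n_2,n_1,n_3,n$ at one of the four corners.

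The problem then reduces to establishing the uniform lower bound
\begin{equation*}
\int_{0}^{1}\int_{0}^{1}|g(s,t)|^{2\alpha-2}\,ds\,dt\;\gtrsim\;n_{\max}^{2\alpha-2},
\end{equation*}
which is equivalent to the form stated in the lemma via the elementary equivalence $|a|+|b|+|n|\sim n_{\max}$ (one direction is the triangle inequality; the other direction follows from $|n_2|\le|n_1|+|n_3|+|n|$). This integral estimate is the main step; everything else is bookkeeping.

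To prove the integral estimate I would argue as follows. One of the four corner values of $g$ has modulus exactly $n_{\max}$, and the gradient $(a,b)$ satisfies $|a|,|b|\le 2n_{\max}$ by the triangle inequality. A continuity argument near the extremal corner then produces a rectangle of side-lengths $\min(1,n_{\max}/(4|a|))$ and $\min(1,n_{\max}/(4|b|))$, each at least $1/8$ by the a priori bound on $|a|$ and $|b|$, on which $|g|\ge n_{\max}/2$. Since $2\alpha-2>0$, the integrand is pointwise at least $(n_{\max}/2)^{2\alpha-2}$ on this rectangle of area at least $1/64$, giving the claim. The main point requiring care is the borderline regime $\max(|a|,|b|)\sim n_{\max}$ in which the rectangle genuinely shrinks in both directions; however, the a priori bound $|a|,|b|\le 2n_{\max}$ combined with the affine structure of $g$ keeps its area of order one, so this does not cause a loss. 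The sign $2\alpha-2>0$ is crucial here — the opposite sign would require handling possible singularities of the integrand, which is precisely why the case $\tfrac12<\alpha<1$ demands the separate treatment of~\cite{demirbas2013existence}.
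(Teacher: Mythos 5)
Your proof is correct and takes a genuinely different route from Appendix~\ref{app:lowerbd}. The paper sets $k=n_1-n$, $j=n_3-n$, introduces the odd function $f_c(x)=|x+c|^{2\alpha}-|x-c|^{2\alpha}$, rewrites $|\phi(\overline{n})|=|f_{j/2}(n+j/2)-f_{j/2}(n+j/2+k)|$, and applies a single mean value theorem; the remaining work is a three-subcase lower bound $|f_c'(x)|\gtrsim |c|\max(|x|,|c|)^{2\alpha-2}$ followed by a further split on whether $|n|\lesssim|j|$ or $|n|\gg|j|$. You instead apply the double mean value theorem (Lemma~\ref{DMVT}) to $f(x)=|x|^{2\alpha}$ directly, which identically turns $|\phi(\overline{n})|$ into $2\alpha(2\alpha-1)\,|n-n_1|\,|n-n_3|$ times the double integral of $|g(s,t)|^{2\alpha-2}$ over $[0,1]^2$, and then lower-bound that integral uniformly by $n_{\max}^{2\alpha-2}$ via a single geometric observation: the four corners of the square evaluate $g$ to $n_2,n_3,n_1,n$ respectively, so one of them attains modulus $n_{\max}$; the gradient satisfies $|n-n_1|,|n-n_3|\le 2n_{\max}$; and hence that corner carries a sub-rectangle of sides at least $1/8$ on which $|g|\ge n_{\max}/2$, which suffices because the exponent $2\alpha-2$ is positive. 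Both proofs rest on the same structural facts ($|x|^{2\alpha}\in C^2$ with $f''\ge 0$ when $\alpha>1$) and both correctly defer $\tfrac12<\alpha\le 1$ to \cite{demirbas2013existence}, but your version trades the paper's antisymmetrization-plus-casework for one uniform integral bound, which is cleaner and also makes transparent the parallel with Lemma~\ref{Lemma: DMVT application}, the companion upper bound that arises from the very same DMVT identity.
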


We next state a useful summing estimate, a proof of which can be found in, for example, \cite[Lemma 4.2]{GTV}.

\begin{lemma}\label{lemma:sumestimate} If $\beta\geq \gamma \geq 0$ and $\beta+\gamma >1$, then we have 
\begin{align*}
\sum_{n}\frac{1}{\jb{n-k_{1}}^{\beta}\jb{n-k_{2}}^{\gamma}}&\lesssim \frac{\varphi_{\beta}(k_{1}-k_{2})}{\jb{k_{1}-k_{2}}^{\gamma}}, \\
\int_{\R} \frac{1}{\jb{x-k_{1}}^{\beta}\jb{x-k_{2}}^{\gamma}}dx &\les \frac{\varphi_{\beta}(k_{1}-k_{2})}{\jb{k_{1}-k_{2}}^{\gamma}},
\end{align*}
  where 
  \begin{align*}
\varphi_{\beta}(k):=\sum_{1\leq |n|\leq |k|}\frac{1}{|n|^{\beta}}\sim 
\begin{cases} 
1, \quad  &\textup{if } \,\,\beta>1, \\ 
\log(1+\jb{k}), \quad& \textup{if } \,\, \beta=1, \\
 \jb{k}^{1-\beta}, \quad & \textup{if } \,\, \beta<1.
\end{cases}
\end{align*}
 \end{lemma}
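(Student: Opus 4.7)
The plan is to prove both estimates by a standard three-region decomposition based on the location of the summation (or integration) variable relative to the two centers $k_1$ and $k_2$. By symmetry and a translation, it suffices to treat $\sum_n \jb{n-k_1}^{-\beta}\jb{n-k_2}^{-\gamma}$ and to split $\Z$ into the three regions
\begin{align*}
A_1 &= \{n : |n-k_1| \leq |k_1-k_2|/2\}, \\
A_2 &= \{n : |n-k_2| \leq |k_1-k_2|/2\}, \\
A_3 &= \{n : |n-k_1| > |k_1-k_2|/2 \text{ and } |n-k_2| > |k_1-k_2|/2\}.
\end{align*}

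On $A_1$, the reverse triangle inequality gives $|n-k_2| \geq |k_1-k_2|/2$, so I can factor out $\jb{n-k_2}^{-\gamma} \les \jb{k_1-k_2}^{-\gamma}$ and reduce the estimate to the one-dimensional sum $\sum_{|m|\leq |k_1-k_2|}\jb{m}^{-\beta}$ after shifting $m = n-k_1$; this evaluates (up to a constant) precisely to $\varphi_\beta(k_1-k_2)$, matching the right-hand side exactly. On $A_2$, symmetrically $\jb{n-k_1} \gtrsim \jb{k_1-k_2}$, so I pull out $\jb{k_1-k_2}^{-\beta}$ and obtain $\jb{k_1-k_2}^{-\beta}\varphi_\gamma(k_1-k_2)$; the monotonicity assumption $\beta \geq \gamma$ together with $\beta+\gamma>1$ then allows one to compare the three cases ($\beta>1$, $\beta=1$, $\beta<1$) to $\jb{k_1-k_2}^{-\gamma}\varphi_\beta(k_1-k_2)$ case by case. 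On $A_3$, one has $\jb{n-k_1}\sim \jb{n-k_2}$ (since $|n-k_1|\leq |n-k_2|+|k_1-k_2|\leq 3|n-k_2|$ when both exceed $|k_1-k_2|/2$), so the sum collapses to $\sum_{|m|\gtrsim |k_1-k_2|}\jb{m}^{-(\beta+\gamma)}\les \jb{k_1-k_2}^{1-\beta-\gamma}$, using $\beta+\gamma>1$; a small case analysis again shows this is dominated by $\jb{k_1-k_2}^{-\gamma}\varphi_\beta(k_1-k_2)$.

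The integral version is handled identically, replacing discrete sums by integrals and using $\int_{|m|\leq |k|}\jb{m}^{-\beta}\,dm \sim 1 + \varphi_\beta(k)$ and $\int_{|m|\geq |k|}\jb{m}^{-(\beta+\gamma)}\,dm \sim \jb{k}^{1-\beta-\gamma}$. I do not expect any real obstacle here; the only point that requires slight care is the region $A_2$, where the discrepancy between $\varphi_\gamma(k_1-k_2)\jb{k_1-k_2}^{-\beta}$ and the target $\varphi_\beta(k_1-k_2)\jb{k_1-k_2}^{-\gamma}$ must be absorbed by separately verifying the inequality in the three logarithmic/algebraic regimes for $\beta$, using $\beta \geq \gamma$ and $\beta+\gamma > 1$.
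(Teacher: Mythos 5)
Your proof is correct. The paper gives no proof of this lemma — it simply cites Ginibre--Tsutsumi--Velo~\cite[Lemma 4.2]{GTV} — and your three-region split (near $k_1$, near $k_2$, far field) together with the case analysis on $\beta,\gamma$ using $\beta\geq\gamma$ and $\beta+\gamma>1$ to verify $\varphi_\gamma(k)\jb{k}^{-\beta}\lesssim\varphi_\beta(k)\jb{k}^{-\gamma}$ and $\jb{k}^{1-\beta-\gamma}\lesssim\varphi_\beta(k)\jb{k}^{-\gamma}$ is exactly the standard argument found there.
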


Finally, we will require the following fact from elementary number theory~\cite{divcnt}:
Given $n\in \mathbb{N}$ and any $\dl>0$, there exists a constant $C_{\dl}>0$ such that the number of divisors $d(n)$ of $n$ satisfies 
\begin{align}
d(n) \leq C_{\dl} n^{\dl}. \label{divisorcount}
\end{align}

\section{Reformulation of FNLS}\label{section:gauge}
In this section, we reformulate FNLS~\eqref{FNLS} into a more amenable form for the normal form reductions in the next section. Given $t\in \R$, we consider the gauge transform $\mathcal{G}_t$ on $L^2(\T)$ defined by 
\begin{equation*}
\mathcal{G}_t[f]=e^{2it\fint |f|^2 dx}f, \
\end{equation*}
where $\fint_{\T}f(x)\,dx :=\frac{1}{2\pi}\int_{\T}f(x)\,dx$. Furthermore, given $u\in C(\R;L^2(\T))$, we define $\mathcal{G}$ by 
\begin{equation*}
\mathcal{G}[u](t):=\mathcal{G}_t[u(t)].
\end{equation*}
It is easy to check that $\mathcal{G}$ is invertible with inverse 
\begin{equation*}
\mathcal{G}^{-1}[u](t)=\mathcal{G}_{-t}[u(t)].
\end{equation*}
Now, let $u\in C(\R;L^{2}(\T))$ be a solution to (\ref{FNLS}) and define $v$ by 
\begin{equation*}
v(t)=\mathcal{G}[u](t).
\end{equation*}
Then it follows from mass conservation~\eqref{mass} that $v$ satisfies 
\begin{equation}\label{Equation for v}
i\partial_tv+(-\partial_x^2)^{\alpha}v=\left(|v|^2-2\fint_{\T}|v|^2\,dx \right)v.
\end{equation}
Namely, $v$ satisfies \eqref{FNLS} but with a more favourable nonlinearity.
We define $\Phi(t): u_0\in H^{\s}(\T) \mapsto v(t)\in H^{\s}(\T)$ to be the solution map of~\eqref{Equation for v} (when it exists) at time $t$.

In order to make the following calculations secure, we consider the following truncated equation:
\begin{equation}\label{Truncated equation for v}
i\partial_tv+(-\partial_x^2)^{\alpha}v=\mathbf{P}_{\leq N}\left[ \left(|\mathbf{P}_{\leq N}v|^2-2\fint_{\T}|\mathbf{P}_{\leq N}v|^2\,dx \right)\mathbf{P}_{\leq N}v\right].
\end{equation}
Here, $\P_{\leq N}$ is the projection onto frequencies $\{ n\, : \, |n|\leq N\}$ for $N\in \mathbb{N}$.
We let $\Phi_N(t)$ denote the solution map of \eqref{Truncated equation for v} at time $t$ (when it exists).

To exploit the dispersive nature of \eqref{Equation for v}, we will need another gauge transform. We define the {\it interaction representation} of $v$ as 
\begin{equation}
w(t)=S(-t)v(t),   \label{interaction representation}
\end{equation}
where $S(t)=e^{it(-\partial_x^2)^{\alpha}}$. On the Fourier side, we have\footnote{For clarity, we will sometimes write $\ft f(n)$ as $\ft {f}_{n}$.}
\begin{equation*}
\ft w_n(t)=e^{-it|n|^{2\alpha}}\ft v_n(t).
\end{equation*}
Then, the equation \eqref{Equation for v} becomes the following equation for the Fourier coefficients $\{\ft w_n\}_{n\in \Z}$:
\begin{align} \label{equation for w}
\begin{split}
\partial_t \ft w_n &=-i\sum_{\Gamma(n)}e^{it\phi(\bar{n})}\ft w_{n_1}\overline{\ft w_{n_2}}\ft w_{n_3}+i|\ft w_n|^2 \ft w_n,
\end{split}
\end{align}
where the phase function $\phi(\overline{n})$ and the plane $\Gamma(n)$ are given by
\begin{equation*}
\phi(\overline{n})=\phi(n_1,n_2,n_3,n)=|n_1|^{2\alpha}-|n_2|^{2\alpha}+|n_3|^{2\alpha}-|n|^{2\alpha}
\end{equation*}
and
\begin{equation}
\Gamma(n)=\{(n_1,n_2,n_3)\in \Z^3:n=n_1-n_2+n_3 \textnormal{ and }n_1,n_3\neq n \}. \label{nrplane}
\end{equation}
Similarly, the truncated equation \eqref{Truncated equation for v} becomes the following equation for the Fourier coefficients $\{\ft w_n\}_{n\in \Z}$:
\begin{align} \label{truncated w equation}
\begin{split}
\partial_t \ft w_n=\mathbf{1}_{|n|\leq N}\left[-i\sum_{\Gamma_{N}(n)}e^{it\phi(\bar{n})}\ft w_{n_1}\overline{  \ft w_{n_2}}\ft w_{n_3}+i|\ft w_n|^2 \ft w_n\right],
\end{split}
\end{align}
where the plane $\Gamma_{N}(n)$ is given by
\begin{equation*}
\Gamma_N(n)=\Gamma(n)\cap\{(n_1,n_2,n_3):|n_j|\leq N, \,\, j=1,2,3 \}.
\end{equation*}
From now on, for ease of notation, we will typically ignore the `hats' on the Fourier coefficients. 

The following lemma shows that it suffices to prove the quasi-invariance of $\mu_s$ under the flow of \eqref{Equation for v}.
\begin{lemma}\label{Gauge transform quasi}
The following is true:
\begin{enumerate}[\normalfont (i)]
\setlength\itemsep{0.3em}
\item Let $s>\frac{1}{2}$. Then, for any $t\in \R$, the Gaussian measure $\mu_s$ is invariant under the map $\mathcal{G}_t$.
\item Let $(X,\mu)$ be a measure space and suppose that $T_1$ and $T_2$ are maps from $X$ to itself such that $\mu$ is quasi-invariant under $T_1$ and is quasi-invariant under $T_2$. Then, $\mu$ is quasi-invariant under the composition $T_1\circ T_2$.
\end{enumerate}
\end{lemma}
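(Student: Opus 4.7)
The plan is to treat the two parts separately. Part (i) will require a probabilistic argument exploiting the rotational invariance of the complex Gaussian distribution, while part (ii) reduces to a purely measure-theoretic manipulation with mutual absolute continuity.

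For part (i), I would work directly with the random series \eqref{gaussiandata} defining $\mu_s$. The hypothesis $s>\tfrac12$ ensures (taking $\eps$ sufficiently small) that $u^\omega\in H^{s-\frac12-\eps}(\T)\subset L^2(\T)$ almost surely, so by Parseval's identity $\fint_\T|u^\omega|^2\,dx=\sum_{n\in\Z}|\ft u^\omega(n)|^2$ converges almost surely and $\mathcal{G}_t[u^\omega]$ is well-defined on $\supp(\mu_s)$. The key observation is that on the Fourier side, $\mathcal{G}_t$ multiplies every coefficient $\ft f(n)$ by the single unit complex number $e^{i\theta(f)}$ with $\theta(f)=2t\sum_{n\in\Z}|\ft f(n)|^2$, a phase which depends only on the moduli of the Fourier coefficients. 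I would then decompose each Gaussian as $g_n=r_ne^{i\phi_n}$, noting that the families $\{r_n\}_{n\in\Z}$ and $\{\phi_n\}_{n\in\Z}$ are independent, with $\{\phi_n\}$ i.i.d. uniform on $[0,2\pi)$. Conditioning on $\{r_n\}_{n\in\Z}$ freezes $\theta(u^\omega)$, and the claim reduces to the fact that if $\{\phi_n\}$ is i.i.d. uniform on the circle then so is $\{\phi_n+\theta\bmod 2\pi\}$, jointly. Integrating over the moduli via Fubini then yields $(\mathcal{G}_t)_*\mu_s=\mu_s$.

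For part (ii), the approach is measure-theoretic. The plan is to unravel the definition: quasi-invariance under $T_i$ means $(T_i)_*\mu\sim\mu$ in the sense of mutual absolute continuity. The crucial auxiliary fact is that pushforward preserves absolute continuity: if $\nu\ll\mu$ and $T\colon X\to X$ is measurable, then $T_*\nu\ll T_*\mu$, since $T_*\mu(A)=0$ forces $\mu(T^{-1}A)=0$ and hence $\nu(T^{-1}A)=0$. Applying this to $\nu=\mu$ versus $\nu=(T_2)_*\mu$ (using quasi-invariance of $T_2$) gives $(T_1)_*\mu\sim(T_1)_*((T_2)_*\mu)$. Combining with $\mu\sim (T_1)_*\mu$ (quasi-invariance of $T_1$) and the identity $(T_1\circ T_2)_*\mu=(T_1)_*((T_2)_*\mu)$, transitivity of mutual absolute continuity yields $\mu\sim (T_1\circ T_2)_*\mu$.

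I do not expect any genuine obstacle; the only delicate point is verifying that the mean $\fint|u|^2\,dx$ is well-defined $\mu_s$-almost surely, which follows immediately from the Sobolev embedding under $s>\tfrac12$. The remainder of both parts is routine.
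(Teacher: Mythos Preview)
Your proposal is correct. The paper does not actually prove this lemma but defers to \cite[Lemmas 4.4 and 4.5]{oh2016quasi}; your argument for (i) via conditioning on the moduli $\{r_n\}$ and exploiting the rotational invariance of the i.i.d.\ uniform phases is exactly the standard proof appearing there, and your argument for (ii) is the routine measure-theoretic verification.
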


For a proof of these, see \cite[Lemmas 4.4 and 4.5]{oh2016quasi}. We note in the particular case $s=1$, (i) follows from the results in \cite{NahmodStaff}.

\section{Quasi-invariance for $\al>\frac 12$}\label{SEC:QI1}

In this section, we present part of the proof of Theorem~\ref{Main result} and Theorem~\ref{thm:localqi} by applying the argument in \cite{PVT} (Method 3). Namely, we establish quasi-invariance of Gaussian measures $\mu_{s}$ under the flow of FNLS~\eqref{FNLS} for regularities $s$ given in \eqref{M3REG}. We begin in Subsection \ref{subsec:energy} by deriving a suitable modified energy and obtaining the key energy estimate of the form \eqref{PVT energy estimate}. Then, in Subsection~\ref{subsec:concludeQI1}, we use this energy estimate to conclude the quasi-invariance of $\mu_{s}$.

\subsection{Energy estimate} \label{subsec:energy}

Given a smooth solution $v$ to \eqref{Equation for v}, let $w$ be as in \eqref{interaction representation}. Then from \eqref{equation for w}, we have
\begin{align}\label{Hs norm time derivative}
\begin{split}
\frac{d}{dt}\lVert v(t)\rVert_{H^s}^2&=\frac{d}{dt}\lVert w(t)\rVert_{H^s}^2 \\
&=-2\, \mbox{Re}\, i \sum\limits_{n\in \Z}\sum_{\Gamma(n)}\langle n\rangle^{2s}e^{it\phi(\overline{n})}w_{n_1}\overline{w_{n_2}}w_{n_3}\overline{w_{n}}\\
&=\frac{1}{2}\, \textnormal{Re} \, i\sum\limits_{\G(\cj n)}\psi_s(\overline{n})e^{it\phi(\overline{n})}w_{n_1}\overline{w_{n_2}}w_{n_3}\overline{w_{n}},
\end{split}
\end{align}
where $\overline{n}=(n_1,n_2,n_3,n)$,
\begin{equation*}
\Gamma(\cj n) := \{(n_1,n_2,n_3,n)\in \Z^4: n_1-n_2+n_3=n \textnormal{ and } n_1, n_3\neq n\} 
\end{equation*}
and
\begin{align}
\psi_s(\overline{n}) &=\langle n_1\rangle^{2s}-\langle n_2\rangle^{2s}+\langle n_3\rangle^{2s}-\langle n\rangle^{2s}. \label{derivs}
\end{align}
The second equality in (\ref{Hs norm time derivative}) follows by a symmetrisation argument. Indeed, a relabelling of the sum implies
\begin{equation*}
\textnormal{Re}\, i\sum\limits_{\G(\cj n)}\jb{n}^{2s} e^{it\phi(\overline{n})}w_{n_1}\overline{w_{n_2}}w_{n_3}\overline{w_{n}}=\textnormal{Re}\, i\sum\limits_{\G(\cj n)}\langle n_2\rangle^{2s} e^{i\phi(\overline{n})t}w_{n_1}\overline{w_{n_2}}w_{n_3}\overline{w_{n}}.
\end{equation*}
Using the fact that $\Re i a=-\Re i\overline{a}$ for all $a\in\C$, a relabelling also shows
\begin{equation*}
\textnormal{Re}\, i\sum\limits_{\G(\cj n)}\langle n\rangle^{2s} e^{it\phi(\overline{n})}w_{n_1}\overline{w_{n_2}}w_{n_3}\overline{w_{n}}=-\textnormal{Re}\, i\sum\limits_{\G(\cj n)}\langle n_1\rangle^{2s} e^{it\phi(\overline{n})}w_{n_1}\overline{w_{n_2}}w_{n_3}\overline{w_{n}}.
\end{equation*}
This symmetrization puts us in a position to apply Lemma \ref{Lemma: DMVT application} later. 
Writing 
\begin{align*}
\frac{d}{dt} \bigg(  \frac{e^{it\phi(\cj n)}}{i\phi(\cj n)} \bigg)=e^{it\phi(\cj n)},
\end{align*}
and applying the product rule in reverse, 
\eqref{Hs norm time derivative} implies
\begin{align}\label{derivative calc}
\begin{split}
\frac{d}{dt}\lVert w(t)\rVert_{H^s}^2=& \frac{1}{2}\,\mbox{Re}\frac{d}{dt} \left[ \sum\limits_{\G(\cj n)}\frac{\psi_s(\overline{n})}{\phi(\overline{n}) }e^{it\phi(\overline{n})} w_{n_1}\overline{w_{n_2}}w_{n_3}\overline{w_{n}}   \right]\\
&-\frac{1}{2}\,\mbox{Re}\sum\limits_{\G(\cj n)}\frac{\psi_s(\overline{n})}{\phi(\overline{n}) }e^{it\phi(\overline{n})}\partial_t(w_{n_1}\overline{w_{n_2}}w_{n_3}\overline{w_{n}}). 
\end{split}
\end{align}
We now define the modified energy:
\begin{align*}
E_{s,t}(z)& =\lVert z\rVert_{H^s}^2 -\frac{1}{2}\, \mbox{Re}\sum\limits_{\G(\cj n)}\frac{\psi_s(\overline{n})}{\phi(\overline{n}) }e^{it\phi(\overline{n})}z_{n_1}\overline{z_{n_2}}z_{n_3}\overline{z_{n}}\\
&=: \norm{z}_{H^s}^2+R_{s,t}(z).
\end{align*}
Then, it follows from \eqref{derivative calc} that for any solution $w$ to \eqref{equation for w}, we have 
\begin{equation}
\frac{d}{dt}E_{s,t}(w)=-\frac{1}{2}\,\mbox{Re}\sum\limits_{\G(\cj n)}\frac{\psi_s(\overline{n})}{\phi(\overline{n}) }e^{it\phi(\overline{n})}\partial_t(w_{n_1}\overline{w_{n_2}}w_{n_3}\overline{w_{n}}). \label{modenergytderiv}
\end{equation}

At first glance it seems like the modified energy \eqref{modifiedenergy} is non-autonomous in time. However, this time dependence is only superficial. Writing the modified energy in terms of $y:=S(t)z$, we have
\begin{align}
E_s(y)&:=E_{s,t}(S(-t)y)=\lVert y\rVert_{H^s}^2+R_s(y), 
 \label{modifiedenergy}
\end{align}
where
\begin{align}
R_{s}(y):=- \frac{1}{2}\, \mbox{Re}\sum\limits_{\G(\cj n)}\frac{\psi_s(\overline{n})}{\phi(\overline{n})}y_{n_1}\overline{y_{n_2}}y_{n_3}\overline{y_{n}}. \label{RS}
\end{align}
Now, the nonlinear functionals $E_s$ and $R_s$ are clearly autonomous in time.

We now state the following key energy estimate which is of the form \eqref{PVT energy estimate}.

\begin{proposition}\label{Prop: Energy Estimate}
Let $(s,\al)$ belong to one of the following regions:
 \begin{align}
 \begin{split}
\textup{(i)} \hphantom{,xxXXXXXXXXXXX} s>1, &\qquad \textup{when}\qquad \al>\frac{1}{2}, \\
\textup{(ii)} \qquad \max\bigg( \frac 23, \frac{25}{12}-\al\bigg)<s\leq 1, &\qquad \textup{when}\qquad  \al\geq \frac{5}{4},\\
\textup{(iii)} \hphantom{,xXXXXXXX} \frac{3-\al}{2} <s \leq 1, &\qquad   \textup{when}\qquad  1<\al<\frac{5}{4}.
\end{split} \label{highalpha}
\end{align}
 Then, for sufficiently small $\eps>0$, there exists $C>0$ such that 
\begin{equation}
\left|\frac{d}{dt}E_s(\mathbf{P}_{\leq N}v(t)) \right|\leq C\norm{v(t)}_{H^{s-\frac{1}{2}-\eps}}^6, \label{energyest}
\end{equation}
for all $N\in \mathbb{N}$ and any solution $v$ to \eqref{Truncated equation for v}, uniformly in $t\in\R$.
\end{proposition}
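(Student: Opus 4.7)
The plan is to differentiate the modified energy $E_s$ and exploit the smoothing built into the correction term $R_s$. Starting from (\ref{modenergytderiv}) and returning to $v$ via the isometry $w=S(-t)v$, I would substitute the equation (\ref{truncated w equation}) for $\partial_t w_n$ into each of the four factors $w_{n_1},\overline{w_{n_2}},w_{n_3},\overline{w_n}$ produced by the product rule. This expands $\frac{d}{dt}E_s(\P_{\leq N}v)$ into a sum of eight $6$-linear expressions in the Fourier coefficients of $v$: four coming from the genuinely cubic term $-i\sum_{\Gamma(n_j)}e^{it\phi(\bar n_j)}w_{m_1}\overline{w_{m_2}}w_{m_3}$ (with $n_j$ ranging over $n_1,n_2,n_3,n$), and four from the resonant diagonal correction $i|w_{n_j}|^2w_{n_j}$. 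The symmetries $n_1\leftrightarrow n_3$ and $n_2\leftrightarrow n$ of $\G(\cj n)$ reduce this to a small number of representative cases.

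The smoothing engine is the ratio $\psi_s(\cj n)/\phi(\cj n)$. In the region $s>1$, combining Lemma~\ref{Lemma: DMVT application} with Lemma~\ref{lemma: phase lower bound} yields
\[
\left|\frac{\psi_s(\cj n)}{\phi(\cj n)}\right| \lesssim n_{\max}^{2(s-\al)},
\]
while for $s\leq 1$ the DMVT identity is unavailable and one must instead combine the crude bound $|\psi_s(\cj n)|\lesssim n_{\max}^{2s}$ with the phase lower bound to obtain
\[
\left|\frac{\psi_s(\cj n)}{\phi(\cj n)}\right| \lesssim \frac{n_{\max}^{2s-2\al+2}}{|n-n_1|\,|n-n_3|},
\]
and later absorb the divisor-type denominators $|n-n_1|\,|n-n_3|$ via Lemma~\ref{lemma:sumestimate} (with the divisor count (\ref{divisorcount}) invoked if a quadratic constraint on a product of frequencies appears after collapsing the inner sum).

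To close each $6$-linear estimate I would write $|\ft v(n)| = \jb{n}^{-(s-\frac12-\eps)}c_n$ with $\|c_n\|_{\ell^2}=\|v\|_{H^{s-\frac12-\eps}}$, so that matters reduce to combinatorial sums in the $c_n$. After dyadic decomposition and case analysis based on which of the seven frequencies (the six free ones plus the tied inner variable $n_j$) realises $n_{\max}$, Cauchy--Schwarz pairs the largest frequencies and Lemma~\ref{lemma:sumestimate} resolves the remaining sums subject to the constraints $n=n_1-n_2+n_3$ and $n_j=m_1-m_2+m_3$. The diagonal-correction contributions admit a similar, but simpler, treatment since they involve only five distinct Fourier coefficients. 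Throughout one must verify uniformity in $N$, which is automatic because the projection $\P_{\leq N}$ commutes with each of the pointwise frequency bounds used above.

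The main obstacle is the combinatorics of closing these sums with the sharpest possible threshold on $s$. The worst configurations arise when $n_{\max}$ is attained either at the inner tied frequency $n_j$ or at a frequency appearing in only one of the two convolution constraints; these cases force one to spend most of the smoothing factor $n_{\max}^{2(s-\al)}$ (or $n_{\max}^{2s-2\al+2}$) on a single pairing, and balancing this expenditure against the $H^{s-\frac12-\eps}$ regularity available on the six factors is exactly what produces the three distinct regimes recorded in (\ref{highalpha}). I anticipate that the transition at $\al=\tfrac54$ will correspond to whether the smoothing gain of $n_{\max}^{2(s-\al)}$ alone suffices to sum over the ``inner'' quadruple, while below $\al=1$ the loss of the DMVT gain is the dominant constraint, producing the stronger restriction $s>1$.
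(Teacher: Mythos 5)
Your overall strategy aligns with the paper's: differentiate the modified energy, substitute the truncated equation into each factor to get $\mathcal N_j, \mathcal R_j$, exploit the ratio $\psi_s/\phi$ via Lemma~\ref{lemma: phase lower bound}, and close by Cauchy--Schwarz and Lemma~\ref{lemma:sumestimate} after a case analysis on $n_{\max}$. Your treatment of region (i) via \eqref{phase lb and DMVT ap} is essentially the paper's. But two ingredients essential to the thresholds in (ii)--(iii) are missing. First, in the regime $s\le 1$ you propose to use the crude bound $|\psi_s(\cj n)|\lesssim n_{\max}^{2s}$ uniformly. This is strictly lossy. The paper instead observes that whenever at least two of $n_1,n_2,n_3,n$ are well below $n_{\max}$, one can pair the differences in $\psi_s$ and apply the one-variable mean value theorem twice (e.g.\ if $|n|\sim|n_1|\gg|n_2|,|n_3|$, write $\psi_s=(\jb{n_1}^{2s}-\jb{n}^{2s})+(\jb{n_3}^{2s}-\jb{n_2}^{2s})$ and use $n_3-n_2=n-n_1$) to obtain $|\psi_s(\cj n)|\lesssim n_{\max}^{2s-1}|n-n_j|$ for the appropriate $j\in\{1,3\}$. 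The factor $|n-n_j|$ then cancels against the matching factor from Lemma~\ref{lemma: phase lower bound}, recovering the DMVT multiplier $n_{\max}^{2s-2\alpha}$ even though $s\le1$, whereas your crude bound leaves the strictly larger $n_{\max}^{2s-2\alpha+1}/|n-n_j|$. Without this refinement you cannot reach the stated thresholds in Cases 1--7 of the argument.

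Second, your plan does not surface the mechanism behind the $s>\tfrac23$ threshold. It enters through the $\ell^\infty$ estimate on the inner cubic sum (the one produced by substituting $\partial_t w_{n_j}$), namely
\begin{align*}
\bigg\| \sum_{\G(n_1)} \jb{m_1}^{\s-\frac 16} w_{m_1}\cj{w_{m_2}}w_{m_3} \bigg\|_{\l^{\infty}_{n_1}} \les \|w\|_{H^{\frac 16}}^{2}\|w\|_{H^{\s}} \les \|w\|_{H^{\s}}^{3},
\end{align*}
applied \emph{before} the outer Cauchy--Schwarz; the final inequality needs $\s\ge\tfrac16$, i.e.\ $s>\tfrac23$. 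Closing purely by pairwise Cauchy--Schwarz on the outer factors, as you describe, does not produce this constraint and it is unclear the sums close without such a trick. Your intuition that the $\alpha=\tfrac54$ transition governs whether the all-comparable case $|n|\sim|n_1|\sim|n_2|\sim|n_3|$ can be handled by H\"older--Young alone or needs the divisor count \eqref{divisorcount} is correct, but your attribution of the $\alpha>1$ restriction in region (iii) to the loss of the DMVT gain inverts the actual mechanism: for $s\le1$ the DMVT identity is never used, and $\alpha>1$ enters because the effective exponent $\nu=2(\alpha+s-2-2\eps)$ in the all-comparable subcase must be positive, forcing $s>2-\alpha$, which is compatible with $s\le 1$ only when $\alpha>1$.
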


\begin{proof}
Using \eqref{modifiedenergy} and the unitarity of $S(t)$ on $H^{s-\frac 12-\eps}(\T)$, it suffices to prove, that for small $\eps>0$, there exists $C>0$ such that
\begin{equation}
\left|\frac{d}{dt}E_{s,t}(\mathbf{P}_{\leq N}w(t)) \right|\leq C\norm{w(t)}_{H^{s-\frac{1}{2}-\eps}}^6 \label{energyw}
\end{equation}
for all $N\in \mathbb{N}$ and any solution $w$ to \eqref{truncated w equation}, uniformly in $t\in \R$.

Using \eqref{modifiedenergy}, \eqref{truncated w equation} and the symmetry between $n_1$ and $n_3$ and between $n_2$ and $n$ in (the appropriate version of) \eqref{modenergytderiv}, we have
\begin{equation}
\frac{d}{dt}E_{s,t}(\P_{\leq N}w) =\mathcal{N}_1(w)+\mathcal{R}_1(w)+\mathcal{N}_2(w)+\mathcal{R}_2(w), \label{derivenergy}
\end{equation}
where
\begin{align}
\mathcal{N}_1(w)&:= -\mbox{Re}\, i\sum\limits_{\G_{N}(\cj n)}\frac{\psi_s(\overline{n})}{\phi(\overline{n}) }e^{it\phi(\overline{n})} \left (\sum\limits_{\Gamma_N(n_1)}e^{it\phi(\overline{m},n_1)}w_{m_1}\overline{w_{m_2}}w_{m_3} \right ) \overline{w_{n_2}}w_{n_3}\overline{w_{n}},  \label{N1}  \\
\mathcal{R}_1(w)&:= -\mbox{Re}\, i\sum\limits_{\G_{N}(\cj n)}\frac{\psi_s(\overline{n})}{\phi(\overline{n}) }e^{it\phi(\overline{n})} |w_{n_1}|^2w_{n_1} \overline{w_{n_2}}w_{n_3}\overline{w_{n}},  \label{R1}  \\
\mathcal{N}_2(w)&:=  -\mbox{Re}\, i\sum\limits_{\G_{N}(\cj n)}\frac{\psi_s(\overline{n})}{\phi(\overline{n}) }e^{it\phi(\overline{n})} w_{n_1}\left (\sum\limits_{\Gamma_N(n_2)}e^{-it\phi(\overline{m},n_2)}\overline{w_{m_1}}w_{m_2}\overline{w_{m_3}} \right )w_{n_3}\overline{w_{n}}, \label{N2} \\
\mathcal{R}_2(w)&:=- \mbox{Re} \, i\sum\limits_{\G_{N}(\cj n)}\frac{\psi_s(\overline{n})}{\phi(\overline{n}) }e^{it\phi(\overline{n})} w_{n_1} |w_{n_2}|^2\overline{w_{n_2}}w_{n_3}\overline{w_{n}}. \label{R2}
\end{align}
Here,
\begin{align*}
\phi(\cj m,n_1):=|m_1|^{2\al}-|m_2|^{2\al}+|m_3|^{2\al}-|n_1|^{2\al}
\end{align*}
and
\begin{align*}
\G_{N}(\cj n):=\{(n_1,n_2,n_3,n)\in \G(\cj n): |n_j|, |n| \leq N, \,\, j=1,2,3 \}.
\end{align*}
From now on, we will simply write $\G(\cj n)$ instead of $\G_{N}(\cj n)$ as $N$ plays no further role. 
In the following, we heavily make use of the Fourier lattice property of $H^{s}(\T)$; namely, that the $H^{s}$-norm depends only on the absolute value of the Fourier coefficients.
In particular, we assume all the Fourier coefficients $w_n$  are real and non-negative. Moreover, as we never make use of the oscillatory factors 
such as $e^{it\phi(\cj n)}$, we will neglect explicitly writing them.

Consider the first scenario (i). 
For $s>1$, Lemma \ref{lemma: phase lower bound} and Lemma \ref{Lemma: DMVT application} imply we have
\begin{equation}\label{phase lb and DMVT ap}
\frac{|\psi_s(\overline{n})|}{|\phi(\overline{n}) |}\leq \jb{n_{\max}}^{2s-2\alpha}.
\end{equation}
We first estimate $\mathcal{N}_1(w)$ by decomposing the sum into two cases depending on which frequency attains $n_{\max}$. 

\medskip
\noi
$\bullet$ \textbf{Case 1:} $n_\textnormal{max}=|n_1|$

From the conditions $n=n_1-n_2+n_3$ and $n_1=m_1-m_2+m_3$ we have 
$\max(|n_2|,|n_3|,|n|) \gtrsim |n_1|$ and $\max_{j=1,2,3}|m_j|\gtrsim |n_1|$, respectively. We assume  $|n_2|\gtrsim |n_1|$ and $|m_1|\gtrsim |n_1|$ as the other cases are similar. Hence, we have 
\begin{align}
\jb{n_{\max}}^{2s-2\al}\les \jb{m_1}^{s-\frac{1}{2}-\eps}\jb{n_2}^{s-\frac 12-\eps}\jb{n_{\max}}^{-2\al+1+2\eps}\les  \jb{m_1}^{s-\frac{1}{2}-\eps}\jb{n_2}^{s-\frac 12-\eps}. \label{nmaxthing}
\end{align}
Using \eqref{phase lb and DMVT ap}, \eqref{nmaxthing} and Young's inequality for the convolution of sequences we have 
\begin{align*}
|\mathcal{N}_1(w)| &\lesssim \sum\limits_{\G(\cj n)} \left (\sum\limits_{\G(n_1)}\langle m_1  \rangle^{s-\frac{1}{2}-\eps}w_{m_1}w_{m_2}w_{m_3} \right ) \langle n_2 \rangle^{s-\frac{1}{2}-\eps}w_{n_2}w_{n_3}w_{n}\\
& \lesssim \left\lVert\sum\limits_{\Gamma(n_1)}\langle m_1  \rangle^{s-\frac{1}{2}-\eps}w_{m_1}w_{m_2}w_{m_3}  \right\rVert_{\ell^2_{n_1}}\left\lVert\langle n \rangle^{s-\frac{1}{2}-\eps}w_{n}\right\rVert_{\ell^2_{n}}\left\lVert w_n\right\rVert^2_{\ell^1_{n}}.
\end{align*}
 A further application of Young's inequality gives
\begin{equation*}
\left\lVert\sum\limits_{\Gamma(n_1)}\langle m_1  \rangle^{s-\frac{1}{2}-\eps}w_{m_1}w_{m_2}w_{m_3} \right\rVert_{\ell^2_{n_1}}\lesssim \left\lVert \langle n\rangle^{s-\frac{1}{2}-\eps} w_n  \right\rVert_{\ell^2_n} \left\lVert w_n \right\rVert_{\ell^1_n}^2.
\end{equation*}
By H\"{o}lder's inequality and choosing $\eps$ small enough so that $\frac{1}{2}+\eps \leq s-\frac{1}{2}-\eps$ , we have
\begin{equation*}
\norm{w_n}_{\l^1_n}\lesssim \norm{w}_{H^{\frac{1}{2}+\eps}}\lesssim \norm{w}_{H^{s-\frac{1}{2}-\eps}}.
\end{equation*}
Putting this together we get
\begin{equation*}
|\mathcal{N}_1(w)|\lesssim \lVert v\rVert_{H^{s-\frac{1}{2}-\eps}}^6,
\end{equation*}
which is the desired estimate for $\mathcal{N}_1(w)$. 

\medskip
\noi 
$\bullet$ \textbf{Case 2:}  $n_\textnormal{max}\in \{ |n_2|, |n_3|, |n| \}$ 

It suffices to assume $n_{\max}=|n_2|$ as the remaining cases follow analogously as below.
Similar to Case 1, we have $\max(|n_1|,|n_3|,|n|) \gtrsim |n_2| $. If $|n_1|\gtrsim |n_2|$, we proceed in exactly the same way as Case 1. If instead $|n_3|\gtrsim |n_2|$ or $|n|\gtrsim |n_2|$, say the former as both subcases are similar, we use Young's inequality 
\begin{align*}
|\mathcal{N}_1(w)| &\lesssim \sum\limits_{\G(\cj n)} \left (\sum\limits_{\Gamma(n_1)}w_{m_1}w_{m_2}w_{m_3} \right ) \langle n_2 \rangle^{s-\frac{1}{2}-\eps}w_{n_2}\langle n_3  \rangle^{s-\frac{1}{2}-\eps}w_{n_3}w_{n}\\
& \lesssim \left\lVert\sum\limits_{\Gamma(n_1)}w_{m_1}w_{m_2}w_{m_3}  \right\rVert_{\ell^1_{n_1}}\left\lVert\langle n \rangle^{s-\frac{1}{2}-\eps}w_{n}\right\rVert_{\ell^2_{n}}^2\left\lVert w_n\right\rVert_{\ell^2_{n}}. 
\end{align*}
A further application of Young's inequality and H\"{o}lder's inequality gives
\begin{equation*}
\left\lVert\sum\limits_{\Gamma(n_1)}w_{m_1}w_{m_2}w_{m_3}  \right\rVert_{\ell^1_{n_1}}\lesssim \norm{w_n}_{\ell^1_n}^{3}\lesssim \norm{w}_{H^{\frac{1}{2}+\eps}}^3.
\end{equation*}
This completes the case $n_{\max}=|n_2|$ and hence the estimate for $\N_{1}(w)$.
The estimate for $\mathcal{N}_2(w)$ follows from similar arguments. Now we estimate $\mathcal{R}_1(w)$. 
\medskip

\noi
$\bullet$ \textbf{Case 1:} $n_\textnormal{max}=|n_1|$

As before, $\max(|n_2|,|n_3|,|n|) \gtrsim |n_1|$. It suffices to assume $|n_2|\gtrsim |n_1|$, as the subcases $|n_3|\gtrsim |n_1|$ and $|n|\gtrsim |n_1|$ are similar. We have,
\begin{align*}
|\mathcal{R}_1(w)| 
&\lesssim \sum_{\G(\cj n)} \langle n_1\rangle^{s-\frac{1}{2}-\eps}w_{n_1}^3 \langle n_2 \rangle^{s-\frac{1}{2}-\eps}w_{n_2}w_{n_3}w_{n}\\
&\lesssim \lVert \langle n\rangle^{s-\frac{1}{2}-\eps} w_n^3 \rVert_{\ell_n^2} \lVert \langle n \rangle^{s-\frac{1}{2}-\eps} w_n\rVert_{\ell^2_n} \lVert w_{n}\rVert_{\ell^1_n}^2.
\end{align*}
Using H\"olders inequality and then the embedding $\ell^\infty_{n}\subset \ell^2_{n}$ we have
\begin{equation*}
\lVert \langle n\rangle^{s-\frac{1}{2}-\eps} w_n^3\rVert_{\ell_n^2}\lesssim \norm{\langle n\rangle^{s-\frac{1}{2}-\eps}w_n}_{\ell^2_n}\norm{w_n}_{\ell^\infty_n}^2\lesssim \norm{\langle n\rangle^{s-\frac{1}{2}-\eps}w_n}_{\ell^2_n}\norm{w_n}_{\ell^2_n}^2.
\end{equation*}
Putting everything together, we have shown
\begin{equation*}
|\mathcal{R}_1(w)| \lesssim \norm{w}_{H^{s-\frac{1}{2}-\eps}}^4 \|w\|_{L^{2}}^{2}.
\end{equation*}
\medskip 

\noi
$\bullet$ \textbf{Case 2:} $n_{\max}\in \{ |n_2|,|n_3|,|n|\}$

It suffices to assume $n_{\max}=|n_2|$ as the remaining cases follow analogously as below.
As before, $\max( |n_1|, |n_3|, |n|) \gtrsim |n_2|$. We apply the argument of Case 1 if $|n_1|\gtrsim |n_2|$.  Instead, if $|n_3|\gtrsim |n_2|$, the remaining case being similar, Young's inequality, the embedding $\ell^2_{n}\subset \ell^3_{n}$ and H\"{o}lder's inequality yield
\begin{align*}
|\mathcal{R}_1(w)| 
&\lesssim \sum_{\G(\cj n)} w_{n_1}^3 \langle n_2 \rangle^{s-\frac{1}{2}-\eps}w_{n_2}\langle n_3\rangle^{s-\frac{1}{2}-\eps}w_{n_3}w_{n}\\
&\lesssim \lVert w_n \rVert_{\ell_n^3}^{3} \lVert \langle n \rangle^{s-\frac{1}{2}-\eps} w_n\rVert_{\ell^2_n}^2 \lVert w_{n}\rVert_{\ell^1_n}\\
&\lesssim \|w\|_{L^2}^{3}\norm{w}_{H^{s-\frac{1}{2}-\eps}}^3 .
\end{align*}
This completes the estimate for $\mathcal{R}_1(w)$ and the estimate for $\mathcal{R}_2(w)$ is similar.
Thus, we have established \eqref{energyest} in the region (i).

\medskip 

\noi

We now move onto establishing \eqref{energyest} when $\tfrac 12<s\leq 1$. This regime is responsible for the regions (ii) and (iii) in \eqref{highalpha}.
As before, we begin with $\N_1(w)$. Notice that since $s\leq 1$, we can no longer apply Lemma~\ref{Lemma: DMVT application}.
We set $\s= s-\frac 12 -\eps$ and define $\tilde{w}_n = \jb{n}^{\s}w_n$.
Without loss of generality, we suppose $|n_1|\les |m_1|$.
The regularity restriction of $s>\frac 23$ arises from the following estimate:
\begin{align}
\bigg\| \sum_{\G(n_1)} \jb{m_1}^{\s-\frac 16} w_{m_1}\cj{w_{m_2}}w_{m_3} \bigg\|_{\l^{\infty}_{n_1}} \les \|w\|_{H^{\frac 16}}^{2}\|w\|_{H^{\s}} \les \|w\|_{H^{\s}}^{3}, \label{linfty3}
\end{align}
where the second inequality holds provided $s>\frac 23$.
We decompose the sum in $\N_1(w)$ into a few cases depending on which frequency attains $n_{\max}$.

\medskip

\noi 
$\bullet$ \textbf{Case 1:}  $|n|\sim |n_1| \gg |n_2|,|n_3|$

In this case, it is clear from \eqref{p1} and Lemma~\ref{lemma: phase lower bound} that 
\begin{align}
|\phi(\cj n)| \gtrsim n_{\max}^{2\al-1}|n-n_1|  \label{case1a}
\end{align}
and from \eqref{derivs} and the mean value theorem,
\begin{align}
|\psi_{s}(\cj n)|\les n_{\max}^{2s-1}|n-n_1|. \label{case1b}
\end{align}
Hence with \eqref{linfty3}, we have
\begin{align*}
\text{RHS of} \,\,\eqref{N1} & \les \sum_{\G(\cj n)}  \frac{n_{\max}^{2s-1}}{n_{\max}^{2\al-1}}  \frac{\tilde{w}_{n_2}  \tilde{w}_{n_3} \tilde{w}_{n}   }{\jb{n}^{\s}\jb{n_1}^{\s-\tfrac 16}  \jb{n_2}^{\s}\jb{n_3}^{\s}} \bigg\| \sum_{\G(n_1)} \jb{m_1}^{\s-\frac 16} w_{m_1}\cj{w_{m_2}}w_{m_3} \bigg\|_{\l^{\infty}_{n_1}} \\
& \les \|w\|_{H^{\s}}^{3} \sum_{\G (\cj n)} \frac{\tilde{w}_{n_2}  \tilde{w}_{n_3} \tilde{w}_{n}}{n_{\max}^{\nu}\jb{n_2}^{\s}\jb{n_3}^{\s}},
\end{align*}
where 
\begin{align*}
\nu = 2\al-2s+2\s-\frac 16 =2\al-\frac 76-2\eps >0.
\end{align*}
By the Cauchy-Schwarz inequality, we bound this by 
\begin{align*}
\|w\|_{H^{\s}}^{6} \bigg( \sum_{\G(\cj n)} \frac{1}{n_{\max}^{2\nu}\jb{n_2}^{2\s}\jb{n_3}^{2\s}}   \bigg)^{\frac 12}
\end{align*}
and we are done, provided we show
\begin{align}
 \sum_{\G(\cj n)} \frac{1}{n_{\max}^{2\nu}\jb{n_2}^{2\s}\jb{n_3}^{2\s}}    \les 1. \label{sum1}
\end{align}
For $\dl>0$ sufficiently small, we have 
\begin{align*}
\sum_{\G(\cj n)} \frac{1}{n_{\max}^{2\nu}\jb{n_2}^{2\s}\jb{n_3}^{2\s}}  \sim\sum_{\substack{n_1, \, n_2, \, n_3 \\ |n_2|,|n_3|\ll |n_1|   }}\frac{1}{\jb{n_1}^{1+\dl}\jb{n_2}^{1+\dl}\jb{n_3}^{1+\dl}} \frac{\jb{n_2}^{1+\dl -2\s}\jb{n_3}^{1+\dl-2\s}}{ \jb{n_1}^{2\nu-1-\dl}}.
\end{align*}
Thus, provided
\begin{align}
2\nu>1 \qquad \text{and} \qquad 4\s +2\nu >3,\label{nucond}
\end{align}
we have 
\begin{align*}
\frac{\jb{n_2}^{1+\dl -2\s}\jb{n_3}^{1+\dl-2\s}}{ \jb{n_1}^{2\nu-1-\dl}} \les \frac{1}{ \jb{n_1}^{2\nu+4\s-3-3\dl}}\les 1,
\end{align*}
and hence \eqref{sum1} follows. 
The first condition in \eqref{nucond} requires $\al>\tfrac 56$, while the last condition requires $s>\tfrac{11}{6}-\al$. 

\medskip

\noi 
$\bullet$ \textbf{Case 2:}  $|n|\sim |n_2| \gg |n_1|,|n_3|$

We have 
\begin{align}
|\phi( \cj n)| \gtrsim n_{\max}^{2\al}, \label{case2a}
\end{align}
and using \eqref{linfty3} leads to 
\begin{align*}
\text{RHS of} \,\, \eqref{N1} & \les \|w\|_{H^{\s}}^{3} \sum_{\G (\cj n)} \frac{\tilde{w}_{n_2}  \tilde{w}_{n_3} \tilde{w}_{n}}{n_{\max}^{\nu}\jb{n_1}^{\s -\tfrac 16}\jb{n_3}^{\s}} ,
\end{align*}
where 
\begin{align*}
\nu = 2\al-2s+2\s=2\al-1-2\eps>0.
\end{align*}
Using Cauchy-Schwarz as in the previous case, we sum over $n_1, n_3$ and $n_2$ provided
\begin{align*}
2\nu>1 \qquad \text{and} \qquad 2\nu+4\s-\frac 13 >3
\end{align*}
and hence $s>\tfrac{11}{6}-\al$. Notice the first condition above requires $\al>\tfrac 34$.

\medskip

\noi 
$\bullet$ \textbf{Case 3:}  $|n|\sim |n_3| \gg |n_1|,|n_2|$

We have 
\begin{align*}
|\phi(\cj n)|\gtrsim n_{\max}^{2\al-1}|n-n_3| \qquad \text{and} \qquad |\psi_{s}(\cj n)|\les n_{\max}^{2s-1}|n-n_3|.
\end{align*}
Thus 
\begin{align*}
\text{RHS of} \,\, \eqref{N1} & \les \|w\|_{H^{\s}}^{3} \sum_{\G (\cj n)} \frac{\tilde{w}_{n_2}  \tilde{w}_{n_3} \tilde{w}_{n}}{n_{\max}^{\nu}\jb{n_1}^{\s -\tfrac 16}\jb{n_2}^{\s}},
\end{align*}
where 
\begin{align*}
\nu = 2\al-2s+2\s=2\al -1-2\eps >0.
\end{align*}
Using Cauchy-Schwarz as in the previous cases, we sum over $n_1, n_2$ and $n_3$ provided
\begin{align*}
2\nu>1 \qquad \text{and} \qquad 2\nu+4\s-\frac 13 >3,
\end{align*}
and hence $s>\tfrac{11}{6}-\al$. The first condition above requires $\al>\tfrac 34$.

\medskip

\noi 
$\bullet$ \textbf{Case 4:}  $|n_1|\sim |n_2| \gg |n_3|,|n|$

We have 
\begin{align*}
|\phi(\cj n)|\gtrsim n_{\max}^{2\al-1}|n-n_3| \qquad \text{and} \qquad |\psi_s (\cj n)|\les n_{\max}^{2s-1}|n-n_3|
\end{align*}
and we proceed as in Case 1 as long as $s>\tfrac{11}{6}-\al$ and $\al>\tfrac 56$.

\medskip

\noi 
$\bullet$ \textbf{Case 5:}  $|n_1|\sim |n_3| \gg |n_2|,|n|$

We have 
\begin{align*}
|\phi(\cj n)|\gtrsim n_{\max}^{2\al},
\end{align*}
and hence
\begin{align*}
\text{RHS of} \,\, \eqref{N1} & \les \|w\|_{H^{\s}}^{3} \sum_{\G (\cj n)} \frac{\tilde{w}_{n_2}  \tilde{w}_{n_3} \tilde{w}_{n}}{n_{\max}^{\nu}\jb{n}^{\s}\jb{n_2}^{\s}}, 
\end{align*}
where 
\begin{align*}
\nu = 2\al-2s+2\s-\frac{1}{6}=2\al -\frac{7}{6}-2\eps >0.
\end{align*}
By Cauchy-Schwarz and summing in $n, n_2$ and $n_1$ as long as 
\begin{align*}
2\nu>1 \qquad \text{and}\qquad 2\nu+4\s>3,
\end{align*}
and hence $s>\tfrac{11}{6}-\al$. The first condition above is satisfied provided $\al>\tfrac 56$.

\medskip

\noi 
$\bullet$ \textbf{Case 6:}  $|n_2|\sim |n_3| \gg |n_1|,|n|$

We have 
\begin{align*}
|\phi(\cj n)|\gtrsim n_{\max}^{2\al-1}|n-n_1| \qquad \text{and} \qquad |\psi_{s}(\cj n)|\les n_{\max}^{2s-1}|n-n_1|,
\end{align*}
and we proceed as in Case 4 as long as $s>\tfrac{11}{6}-\al$ and $\al>\tfrac 34$.

\medskip 

\noi
$\bullet$ \textbf{Case 7:} $|n_1|\sim |n_2|\sim|n_3|\gg |n|$

From Lemma~\ref{lemma: phase lower bound}, we have $|\phi(\cj n)| \gtrsim n_{\max}^{2\al}$ and hence
\begin{align*}
\text{RHS of} \,\, \eqref{N1} & \les \|w\|_{H^{\s}}^{3} \sum_{\G (\cj n)} \frac{n_{\max}^{2s} \tilde{w}_{n_2}  \tilde{w}_{n_3} \tilde{w}_{n}}{n_{\max}^{2\al} n_{\max}^{3\s-\frac{1}{6}}\jb{n}^{\s}}.
\end{align*}
Applying Cauchy-Schwarz in $n_2,n_3$ and $n$, we sum provided 
\begin{align*}
2\bigg(2\al+3\s-\frac 16-2s\bigg)> 2 \qquad \text{and} \qquad 2\al-2s+3\s-\frac{1}{6}+\s>\frac{3}{2},
\end{align*}
which requires $\al +s > \tfrac{11}{6}$ and $2\al+s>\tfrac{8}{3}$. Notice that when $\al>\tfrac 56$, this latter condition is superseded by the former. 
The remaining cases of the form $|n_{j_1}|\sim |n_{j_2}|\sim |n_{j_3}|\gg |n_{j_4}|$ with distinct $j_{k}\in \{ 1,2,3,4\}$ ($n_{j_4}:=n$) are similar and are thus omitted.

\medskip

\noi 
$\bullet$ \textbf{Case 8:}  $|n|\sim |n_1|\sim |n_2|\sim |n_3|$

\noi
We distinguish when $\al$ is `close too' or `far from' $1$.

\medskip

\noi 
$\bullet$ \textbf{Subcase 8.1:} $1<\al <\tfrac{5}{4}$

\noi
By Lemma~\ref{lemma: phase lower bound}, we have
\begin{align*}
\sum_{\G(\cj n)} &\frac{n_{\max}^{2s}}{|n-n_1||n-n_3|n_{\max}^{2\al-2}\jb{n}^{\s}} \bigg(\prod_{j=1}^{3}\frac{1}{\jb{n_j}^{\s}} \bigg)\bigg(\sum_{\G(n_1)} \tilde{w}_{m_1} w_{m_2} w_{m_3} \bigg) \tilde{w}_{n_2} \tilde{w}_{n_3} \tilde{w}_{n} \\
& \les \sum_{\G(\cj n)} \frac{1}{|n-n_1||n-n_3|n_{\max}^{\nu}}  \bigg(\sum_{\G(n_1)} \tilde{w}_{m_1} w_{m_2} w_{m_3} \bigg) \tilde{w}_{n_2} \tilde{w}_{n_3} \tilde{w}_{n}, 
\end{align*}
where 
\begin{align}
\nu = 2\al-2-2s+4\s=2(\al+s-2-2\eps)>0, \label{nu7.2}
\end{align}
provided 
\begin{align}
\al+s>2. \label{nupos7.2}
\end{align}
By Cauchy-Schwarz in $n,n_1$ and $n_3$, followed by summing in $n$ and $n_3$, we get
\begin{align*}
&\les \|w\|_{H^{\s}}^{3} \bigg\|   \frac{1}{\jb{n-n_1}\jb{n-n_3} \jb{n_{\max}}^{\nu}}  \bigg(\sum_{\G(n_1)} \tilde{w}_{m_1} w_{m_2} w_{m_3} \bigg)    \bigg\|_{\l^{2}_{n,n_1,n_3}} \\
& \les  \|w\|_{H^{\s}}^{3} \bigg\|  \frac{1}{\jb{n_1}^{\nu}}  \sum_{\G(n_1)} \tilde{w}_{m_1} w_{m_2}w_{m_3}    \bigg\|_{\l^{2}_{n_1}}.
\end{align*}
Imposing
\begin{align}
\al+s <2+\frac{1}{4} \label{hold7.2}
\end{align}
 implies $\nu<\frac{1}{2}$ so that we can apply H\"{o}lder's inequality and then Young's inequality, with exponents\footnote{Here, we use the notation $a-$ (respectively, $a+$) to denote $a-\dl$ (respectively, $a+\dl$), where $0<\dl\ll 1$ is extremely small.}
 \begin{align*}
\frac{1-2\nu+}{2}+2=\frac{1}{2}+\frac{2}{\frac{2}{2-\nu+} },
\end{align*}
     to obtain
\begin{align*}
&\les \|w\|_{H^{\s}}^{3} \bigg\| \sum_{\G(n_1)} \tilde{w}_{m_1} w_{m_2} w_{m_3}  \bigg\|_{\l_{n_1}^{ \frac{2}{1-2\nu+}   }} \\
& \les \|w\|_{H^{\s}}^{4} \|w_n\|_{\l_{n}^{  \frac{2}{2-\nu+} }}^{2}.
\end{align*}
Once more with H\"{o}lder's inequality, we have
\begin{align*}
\les \|w\|_{H^{\s}}^{6}\| \jb{n}^{-\s}\|^{2}_{\l_{n}^{ \frac{2}{1-\nu+}}} \les \|w\|_{H^{\s}}^{6},
\end{align*}
provided 
\begin{align*}
\frac{2\s}{1-\nu+}>1.
\end{align*}
Using \eqref{nu7.2}, this last condition requires 
\begin{align}
s>\frac{3-\al}{2}. \label{c37.2}
\end{align}

Putting the conditions \eqref{nupos7.2}, \eqref{hold7.2} and \eqref{c37.2} together implies we must enforce in this subcase 
\begin{align*}
\max\bigg( \frac{1}{2}, \frac{3-\al}{2}, 2-\al \bigg) <s \leq \min\bigg(1, \frac{9}{4}-\al \bigg),
\end{align*}
 where the upper bound is strict if the minimum is $\tfrac 94 -\al$. 
Now as $\al\leq \tfrac{5}{4}$, $\min\big(1, \tfrac{9}{4}-\al \big)=1$ and this implies the range
\begin{align*}
\frac{3-\al}{2} <s \leq 1. 
\end{align*}

\medskip

\noi 
$\bullet$ \textbf{Subcase 8.2:} $\al \geq \frac{5}{4}$

Given $n\in \Z$, let
\begin{align*}
\G(n,\rho)=\G(n)\cap \{ (n_1,n_2,n_3)\in \Z^{3}\,:\, \rho=(n-n_1)(n-n_3)\in \Z \}.
\end{align*}
From \eqref{divisorcount} we have, for any $\dl>0$, there exists a $C_\dl >0$ such that 
\begin{align}
|\#\G(n,\rho)| \les C_{\dl}|\rho|^{\dl}. \label{divcount}
\end{align}
By Lemma~\ref{lemma: phase lower bound} and \eqref{linfty3}, we have
\begin{align*}
|\N_1(w)| & \les \sum_{n}\sum_{\rho\neq 0} \sum_{\G(n,\rho)} \frac{n_{\max}^{2s}}{|\rho| n_{\max}^{2\al-2}}  \frac{\tilde{w}_{n_2}  \tilde{w}_{n_3} \tilde{w}_{n}}{\jb{n}^{\s}\jb{n_1}^{\s-\tfrac 16}  \jb{n_2}^{\s}\jb{n_3}^{\s}} \bigg\| \sum_{\G(n)} \jb{m_1}^{\s-\frac 16} w_{m_1}\cj{w_{m_2}}w_{m_3} \bigg\|_{\l^{\infty}_{n_1}} \\
& \les \|w\|_{H^\s}^{3} \sum_{n}\sum_{\rho\neq 0} \sum_{\G(n,\rho)} \frac{1}{|\rho|n_{\max}^{\nu}} \tilde{w}_{n_2}  \tilde{w}_{n_3} \tilde{w}_{n},
\end{align*}
where 
\begin{align*}
\nu= 2\al-2-2s+4\s-\frac{1}{6}=2\al+2s-\frac{25}{6}-2\eps,
\end{align*}
which is positive provided 
\begin{align}
\al+s>\frac{25}{12}. \label{simcond}
\end{align}
To continue, we follow the argument in \cite[Proposition 6.1]{oh2016quasi} and for completeness we detail it here. By Cauchy-Schwarz, \eqref{divcount} and Lemma~\ref{lemma:sumestimate}, we have
\begin{align*}
& \les \|w\|_{H^{\s}}^{4} \bigg[ \sum_{n} \bigg(  \sum_{\rho\neq 0} \sum_{\G(n,\rho)} \frac{1}{|\rho|n_{\max}^{\nu}} \tilde{w}_{n_2}  \tilde{w}_{n_3}  \bigg)^{2}  \,   \bigg]^{\frac{1}{2}} \\ 
& \les \|w\|_{H^{\s}}^{4} \bigg[ \sum_{n} \bigg( \sum_{\rho\neq 0} \frac{1}{|\rho|^{1+2 \dl} } \sum_{\G(n,\rho)} 1 \bigg) \sum_{\rho \neq 0} \sum_{\G(n,\rho)} \frac{1}{|\rho|^{1-2\dl} n_{\max}^{2\nu}} \tilde{w}_{n_2} ^{2} \tilde{w}_{n_3}^{2}    \bigg]^{\frac{1}{2}} \\
& \les \|w\|_{H^{\s}}^{4} \bigg(  \sum_{ n_2, n_3} \tilde{w}_{n_2}^{2}\tilde{w}_{n_3}^{2} \sum_{n_{1}\neq n_2} \frac{1}{|n_1-n_2|^{1-2\dl}\jb{n_1}^{2\nu}} \bigg)^{\frac{1}{2}} \\
& \les \|w\|_{H^{\s}}^{6},
\end{align*}
where from \eqref{simcond} we choose $\dl>0$ small enough so that $\dl<\nu$.

The required range for $s$ in this subcase is 
\begin{align*}
\max\bigg(\frac{2}{3}, \frac{25}{12}-\al \bigg) <s \leq 1. 
\end{align*}
This completes the estimates for $\N_{1}(w)$.
The estimate for $\N_2(w)$ follows analogously.

We now move onto bounding $\mathcal{R}_{1}(w)$. Writing 
\begin{align*}
m(\cj{n}):= \frac{|\psi_{s}(\cj n)| }{|\phi(\cj{n})| \jb{n_1}^{3\s}\jb{n_2}^{\s} \jb{n_3}^{\s}\jb{n}^{\s}},
\end{align*}
it suffices to show
\begin{align}
 \sum_{\G(\cj n)} m(\cj{n})|\tilde{w}_{n_1}|^{3} \tilde{w}_{n_2} \tilde{w}_{n_3}\tilde{w}_{n} \les \|\tilde{w}\|_{L^{2}}^{6}. \label{r1bit}
\end{align}
As above, we divide into a few cases. 

\medskip

\noi 
$\bullet$ \textbf{Case 1:} $|n|\sim |n_1|\gg |n_2|,|n_3|$

Using \eqref{case1a} and \eqref{case1b} we have 
\begin{align*}
m(\cj n) \les \frac{1}{n_{\max}^{\nu}\jb{n_2}^{\s}\jb{n_3}^{\s}},
\end{align*}
where $\nu=2\al-2s+4\s=2\al+2s-2-4\eps$. By Cauchy-Schwarz in $n_2$, $n_3$ and $n$ and the embedding $\l^{2}_{n}\subset \l^6_{n}$, we have
\begin{align*}
\text{LHS of } \,\, \eqref{r1bit} \les \|\tilde{w}\|^{5}_{L^2} \bigg( \sum_{n} \tilde{w}_{n}^{2} \sum_{n_2,n_3} \frac{1}{ n_{\max}^{2\nu}\jb{n_2}^{2\s}\jb{n_3}^{2\s}} \bigg)^{\frac 12} \les \|\tilde{w}\|_{L^2}^{6},
\end{align*}
where we can sum provided $4\s+2\nu>2$ which requires $s>1-\tfrac{1}{2}\al$.

\medskip

\noi 
$\bullet$ \textbf{Case 2:} $|n|\sim |n_3|\gg |n_1|,|n_2|$

Using \eqref{case2a}, we have 
\begin{align*}
m(\cj n) \les \frac{1}{n_{\max}^{\nu}\jb{n_1}^{3\s}\jb{n_2}^{\s}},
\end{align*}
where $\nu=2\al-2s+2\s=2\al-1-2\eps$. By Cauchy-Schwarz in $n_1$, $n_2$ and $n_3$ and the embedding $\l^{2}_{n}\subset \l^6_{n}$, we have
\begin{align*}
\text{LHS of } \,\, \eqref{r1bit} \les \|\tilde{w}\|_{L^2}^5 \bigg( \sum_{n_3} \tilde{w}_{n_3}^{2} \sum_{n_1,n_2} \frac{1}{ n_{\max}^{2\nu}\jb{n_1}^{6\s}\jb{n_2}^{2\s}} \bigg)^{\frac 12} \les \|\tilde{w}\|_{L^2}^{6},
\end{align*}
where we can sum provided $s>\max(\tfrac 23,2-2\al)$.

\medskip

\noi 
$\bullet$ \textbf{Case 3:} $|n_1|\sim |n_2| \sim |n_3|\sim |n|$

From \eqref{p1}, we have 
\begin{align*}
m(\cj n) \les \frac{n_{\max}^{2s}}{|n-n_1||n-n_3| n_{\max}^{2\al-2}}\frac{1}{ \jb{n_1}^{3\s}\jb{n_2}^{\s}\jb{n_3}^{\s}\jb{n}^{\s}} \sim \frac{1}{|n-n_1||n-n_3| n_{\max}^{\nu}},
\end{align*}
where 
\begin{align*}
\nu = 2\al-2-2s+6\s>0,
\end{align*}
provided $s>\tfrac 54-\tfrac 12 \al$. An application of Cauchy-Schwarz then implies 
\begin{align*}
\text{LHS of } \,\, \eqref{r1bit} \les \|\tilde{w}\|_{L^2}^5 \bigg(\sum_{n} \tilde{w}_{n}^{2} \sum_{n_1,n_3\neq n} \frac{1}{|n-n_1|^{2}|n-n_3|^{2}}  \bigg)^{\frac 12} \les \| \tilde{w}\|_{L^{2}}^{6}.
\end{align*}

All remaining cases follow analogously from the methods in either Case 1 or Case 2 above and are thus omitted. This completes the bound for $\mathcal{R}_1$.
Notice the condition from Case 3 supersedes the conditions from Cases 1 and 2. Furthermore, at least for every $\al\geq \tfrac{7}{6}$, we have 
\begin{align*}
\frac{2}{3}\geq \frac{5}{4}-\frac{1}{2}\al
\end{align*}
and hence we obtain the final conditions on $s$ and $\al$ of \eqref{highalpha}.

Finally, this completes the proof of \eqref{energyw}.

\end{proof}

We also have the following difference estimate for $R_s(v)$.
It will be convenient to view $R_{s}$ as a multi-linear functional
\begin{align*}
R_{s}(u^{(1)},u^{(2)},u^{(3)},u^{(4)}):=\frac{1}{2}\text{Re} \, \sum_{\G(\cj n)}\frac{\psi_{s}(\cj n)}{\phi(\cj n)}u^{(1)}_{n_1}\cj{u^{(2)}_{n_2}}u^{(3)}_{n_3}\cj{u^{(4)}_{n}},
\end{align*}
where $R_s (v,v,v,v)=R_s (v)$.

\begin{proposition}\label{Prop: difference estimate for R_s(v)}
Suppose
 \begin{align}
 \begin{split}
\textup{(i)} \hphantom{xxXXXXXXXX} s>1, &\qquad \textup{when}\qquad \al>\frac{1}{2}, \,\,\textup{or} \\
\textup{(ii)} \qquad s>\max\bigg(2-\al,\frac 12 \bigg), &\qquad \textup{when}\qquad  \al\geq 1.
\end{split} \label{highalpha2}
\end{align}
Then, for sufficiently small $\eps>0$ there exists $C>0$ such that 
\begin{equation*}
|R_s(u)-R_s(v)|\leq C\norm{u-v}_{H^{s-\frac{1}{2}-\eps}}(\norm{u}_{H^{s-\frac{1}{2}-\eps}}^3+\norm{v}_{H^{s-\frac{1}{2}-\eps}}^3)
\end{equation*}
for all $u,v\in H^{s-\frac{1}{2}-\eps}(\T)$.
\end{proposition}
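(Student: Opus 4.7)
The plan is to exploit the multilinearity of $R_{s}$. First, I would record the telescoping identity
\begin{align*}
R_{s}(u)-R_{s}(v) & = R_{s}(u-v,u,u,u)+R_{s}(v,u-v,u,u)\\
& \quad + R_{s}(v,v,u-v,u) + R_{s}(v,v,v,u-v),
\end{align*}
which reduces the proposition to the symmetric 4-linear estimate
\begin{equation*}
\bigl| R_{s}(u^{(1)},u^{(2)},u^{(3)},u^{(4)}) \bigr| \leq C \prod_{j=1}^{4} \bigl\| u^{(j)}\bigr\|_{H^{s-\frac{1}{2}-\eps}},
\end{equation*}
valid under the hypothesis \eqref{highalpha2}. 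By the Fourier lattice property I may assume the Fourier coefficients are non-negative, so it suffices to bound
\begin{equation*}
\sum_{\Gamma(\cj n)} \frac{|\psi_{s}(\cj n)|}{|\phi(\cj n)|}\, \frac{\tilde u^{(1)}_{n_{1}} \tilde u^{(2)}_{n_{2}} \tilde u^{(3)}_{n_{3}} \tilde u^{(4)}_{n}}{\jb{n_{1}}^{\sigma}\jb{n_{2}}^{\sigma}\jb{n_{3}}^{\sigma}\jb{n}^{\sigma}} \les \prod_{j=1}^{4}\bigl\| \tilde u^{(j)}\bigr\|_{\ell^{2}},
\end{equation*}
where $\sigma = s-\tfrac{1}{2}-\eps$ and $\tilde u^{(j)}_{n} = \jb{n}^{\sigma} u^{(j)}_{n}$.

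For case (i), with $s>1$, I would combine the DMVT bound of Lemma~\ref{Lemma: DMVT application} with the phase lower bound of Lemma~\ref{lemma: phase lower bound} to obtain
\begin{equation*}
\frac{|\psi_{s}(\cj n)|}{|\phi(\cj n)|} \lesssim n_{\max}^{2s-2\alpha}.
\end{equation*}
Since $n_{\max}$ is always comparable to at least two of $|n_{1}|,|n_{2}|,|n_{3}|,|n|$, I would distribute two factors of $n_{\max}^{\sigma}$ to absorb the $\jb{\cdot}^{\sigma}$ on those two frequencies, leaving $n_{\max}^{2s-2\alpha-2\sigma} = n_{\max}^{1+2\eps-2\alpha}\les 1$ whenever $\alpha>\tfrac{1}{2}$. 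The resulting sum is then controlled by Cauchy--Schwarz and Young's inequality, exactly as in Cases 1 and 2 of the bound for $\N_{1}(w)$ in the proof of Proposition~\ref{Prop: Energy Estimate}.

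For case (ii), with $\alpha\geq 1$ and $\tfrac{1}{2}< s\leq 1$, DMVT is no longer available, so I would instead repeat the case analysis from the proof of Proposition~\ref{Prop: Energy Estimate} (without the nested sum over $\Gamma(n_{1})$, so with cleaner counts). In each non-resonant case one of the two basic phase lower bounds $|\phi(\cj n)|\gtrsim n_{\max}^{2\alpha-1}|n-n_{j}|$ (with the matching $|\psi_{s}(\cj n)|\les n_{\max}^{2s-1}|n-n_{j}|$ coming from the mean value theorem) or $|\phi(\cj n)|\gtrsim n_{\max}^{2\alpha}$ applies, and Cauchy--Schwarz closes the sum. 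The main obstacle, as in the energy estimate, is the fully resonant case $|n_{1}|\sim|n_{2}|\sim|n_{3}|\sim|n|$. There I would use Lemma~\ref{lemma: phase lower bound} in the form
\begin{equation*}
\frac{|\psi_{s}(\cj n)|}{|\phi(\cj n)|} \lesssim \frac{n_{\max}^{2s-2\alpha+2}}{|n-n_{1}|\,|n-n_{3}|},
\end{equation*}
parametrise the resonant set by $\rho=(n-n_{1})(n-n_{3})\in\Z$, and invoke the divisor bound~\eqref{divisorcount} exactly as in Subcase 8.2 of the proof of Proposition~\ref{Prop: Energy Estimate}. The resulting decay exponent $\nu = 2\alpha-2-2s+4\sigma = 2(\alpha+s-2-2\eps)$ is positive precisely under the condition $s>2-\alpha$, which together with $s>\tfrac{1}{2}$ gives the restriction in \eqref{highalpha2}\,(ii). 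Collecting all cases yields the claimed 4-linear bound and completes the proof.
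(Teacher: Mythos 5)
Your proposal is correct and follows essentially the same strategy as the paper: multilinear telescoping to reduce to a symmetric 4-linear estimate, the same phase/DMVT multiplier bounds, and the same $n_{\max}$-driven case analysis. The only place you diverge is the fully resonant case $|n_1|\sim|n_2|\sim|n_3|\sim|n|$, where you propose the $\rho=(n-n_1)(n-n_3)$ parametrisation with the divisor bound as in Subcase~8.2 of Proposition~\ref{Prop: Energy Estimate}; the paper instead notes $\nu\geq 0$, discards the $n_{\max}^{-\nu}$ factor, and runs a direct Cauchy--Schwarz against the absolutely convergent sum $\sum_{n_1,n_3\neq n}|n-n_1|^{-2}|n-n_3|^{-2}$. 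The divisor-counting route does work, but it was only forced in the energy estimate because the $\ell^\infty$ extraction left three summed factors; here all four $\tilde u^{(j)}$ are available, so the paper's simpler argument closes and produces the same threshold $s>2-\al$.
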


\begin{proof}
By the multi-linearity of $R_{s}(u)$, it suffices to show 
\begin{align}
|R_{s}(\{ u^{(j)}\}_{j=1}^{4})| \les \prod_{j=1}^{4} \| u^{(j)}\|_{H^{\s}},\label{Rsboundmult}
\end{align}
where $\s:= s-\tfrac 12 -\eps$.

We first consider case (i) in \eqref{highalpha2}. 
Using \eqref{RS} and Lemmas \ref{lemma: phase lower bound} and \ref{Lemma: DMVT application} we have
\begin{align*}
|R_s( \{ u^{(j)}\}_{j=1}^{4} )|\lesssim \sum\limits_{\G(\cj n)}\langle n_{\max}\rangle^{2s-2\alpha}|u^{(1)}_{n_1}||u^{(2)}_{n_2}| |u^{(3)}_{n_3}| |u^{{4})}_{n}|.
\end{align*}
Similar to the proof of \eqref{Prop: Energy Estimate}, it suffices to consider the following case when $ n_{\max}=|n_1|$ and $|n_2|\gtrsim |n_1|$.
Using Young's and H\"{o}lder's inequality, we get
\begin{align*}
|R_s(\{ u^{(j)}\}_{j=1}^{4})|&\les \norm{\langle n_1 \rangle^{\s}u^{(1)}_{n_1}}_{\ell^2_{n_1}}\norm{\langle n_2\rangle^{\s} u^{(2)}_{n_2} }_{\ell^2_{n_2}}\| u^{(3)}_{n_3}\|_{\l^{1}_{n_3}} \| u^{(4)}_{n}\|_{\l^{1}_{n}}  
\lesssim \prod_{j=1}^{4} \| u^{(j)}\|_{H^{\s}}.
\end{align*}
We now consider case (ii) in \eqref{highalpha2}.
As it is already contained within the case $s>1$ and $\al >\frac 12$ proved above, we now bound 
$|R_{s}(\{ u^{(j)}\}_{j=1}^{4} )|$ when $s\leq 1$ for $\al \geq 1$. Given such an $s$, let $\s = s-\tfrac 12 -\eps$. 
We have
\begin{align}
|R_s (\{ u^{(j)}\}_{j=1}^{4})| \les \sum_{\G(\cj n)} m(\cj n) |\tilde{u}^{(1)}_{n_1}| |\tilde{u}^{(2)}_{n_2}||\tilde{u}^{(3)}_{n_3}|\tilde{u}^{(4)}_{n}|,  \label{r1bd}
\end{align}
where 
\begin{align*}
m(\cj n)=\frac{|\psi_{s}(\cj n)|}{|\phi(\cj n)|\jb{n}^{\s}} \prod_{j=1}^{3}\frac{1}{\jb{n_j}^\s}.
\end{align*}
As in the proof of Proposition~\ref{Prop: Energy Estimate}, we consider a few cases depending on $n_{\max}$. 

\medskip

\noi 
$ \bullet$ \textbf{Case 1:} $|n|\sim|n_1|\gg |n_2|,|n_3|$

In this case, we have
\begin{align*}
|\phi(\cj n)|\gtrsim n_{\max}^{2\al-1}|n-n_1| \qquad \text{and} \qquad |\psi(\cj n)|\les n_{\max}^{2s-1}|n-n_1|,
\end{align*}
 and thus
\begin{align*}
m(\cj n) \les \frac{1}{n_{\max}^{\nu}} \frac{1}{\jb{n_2}^{\s}\jb{n_3}^{\s}},
\end{align*}
where $\nu =2\al-1-2\eps>0$.
By Cauchy-Schwarz inequality, 
\begin{align*}
\text{RHS of} \,\, \eqref{r1bd} &\les \prod_{j=1}^{3}\|u^{(j)}\|_{H^{\s}}^{3} \bigg( \sum_{n,n_2,n_3 \in \Z}\frac{1}{n_{\max}^{2\nu}\jb{n_2}^{2\s}\jb{n_3}^{2\s}}(\tilde{u}^{(4)}_{n})^{2}  \bigg)^{\frac 12}
\les \prod_{j=1}^{4}\|u^{(j)}\|_{H^{\s}}^{4},
\end{align*}
where we can sum in $n_2$ and $n_3$ provided $\nu +2\s>1$, which requires
\begin{align*}
\al+s>\frac{3}{2}. 
\end{align*}

\medskip

\noi 
$ \bullet$ \textbf{Case 2:} $|n|\sim|n_2|\gg |n_1|,|n_3|$

Here we use $|\phi(\cj n)|\gtrsim n_{\max}^{2\al}$ which implies 
\begin{align*}
m(\cj n) \les \frac{1}{n_{\max}^{\nu}\jb{n_1}^{\s}\jb{n_3}^{\s}},
\end{align*}
where $\nu = 2\al-1-2\eps>0$. We proceed as in Case 1 by using Cauchy-Schwarz and summing in $n_1$ and $n_3$ with $\tilde{v}_{n_2}^{2}$ absorbing the remaining $n_2$ summation. 

It is easy to check that all remaining Cases 3 through 7 as explicated in the proof of Proposition~\ref{Prop: Energy Estimate} follow analogously to the two cases above. 

\medskip

\noi 
$ \bullet$ \textbf{Case 3:} $|n|\sim |n_1| \sim |n_2| \sim |n_3|$

We can only use the lower bound of Lemma~\ref{lemma: phase lower bound} and this implies 
\begin{align*}
m(\cj n) \les \frac{1}{  |n-n_1| |n-n_3| n_{\max}^{\nu}},
\end{align*}
where $\nu=2\al-2-2s+4\s$ which is non-negative provided
\begin{align*}
\al+s> 2. 
\end{align*} 
Then Cauchy-Schwarz over $\G(\cj n)$ gives
\begin{align*}
\text{RHS of} \,\, \eqref{r1bd} &\les \prod_{j=1}^{3}\|u^{(j)}\|_{H^{\s}}^{3} \bigg( \sum_{n\in \Z}  (\tilde{u}^{(4)}_{n})^{2}  \sum_{n_1, n_3 \in \Z}  \frac{1}{|n-n_1|^{ 2}|n-n_3|^2}\bigg)^{\frac 12} \\
& \les \prod_{j=1}^{4}\|u^{(j)}\|_{H^{\s}}^{4}.
\end{align*}
This completes the proof of \eqref{Rsboundmult}.

\end{proof}

\begin{remark} \rm 
Since $2-\al \leq \frac{25}{12}-\al$ and $2-\al \leq \tfrac{3-\al}{2}$ for $\al \geq 1$, the restriction \eqref{highalpha} for the energy estimate supersedes \eqref{highalpha2}~(ii), which is the condition for the correction term $R_{s}(u)$.
\end{remark}

\subsection{Proof of Theorem~\ref{Main result} \eqref{resultals1}} \label{subsec:concludeQI1}
In this subsection, we follow the argument introduced in \cite{PVT} to conclude quasi-invariance of Gaussian measures $\mu_{s}$ for $\al >\tfrac{1}{2}$ and those $s$ given in Proposition~\ref{Prop: Energy Estimate}. In particular, we conclude Theorem~\ref{Main result}~\eqref{resultals1} and the $s>1$ portion of Theorem~\ref{thm:localqi}. 

We define the following measures:
\begin{align*}
d\rho_{s}=F_{s}(u)d\mu_s \qquad \text{and} \qquad d\rho_{s,N}=F_{s,N}(u)d\mu_s,\quad 
\end{align*}
where 
\begin{align*}
F_s(u)=\exp\left(-\tfrac{1}{2}E_s(u)+\tfrac{1}{2}\norm{u}_{H^s}^2  \right)= \exp\left(-\tfrac{1}{2}R_s(u)\right)\quad\textnormal{ and }\quad F_{s,N}(u)=F_s(\mathbf{P}_{\leq N} u).
\end{align*}
The measure $\rho_{s,N}$ can also be expressed as
\begin{equation}\label{alternative expression}
d\rho_{s,N}=Z_{s,N}^{-1}\exp\left(-\tfrac{1}{2}E_s(\mathbf{P}_{\leq N} u) \right)du_{\leq N}\times d\mu_{s,N}^\perp,
\end{equation}
where $du_{\leq N}$ denotes the Lebesgue measure on $\mathbb{C}^{2N+1}$.
The constant $Z_{s,N}^{-1}$ is the normalisation constant associated to the measure $\mu_{s,N}$ which is given by
\begin{align*}
d\mu_{s,N}=Z^{-1}_{s,N}e^{-\frac{1}{2}\norm{\mathbf{P}_{\leq N}u}_{H^s}^2}du_{N}.
\end{align*}
In particular, $\mu_{s,N}$ is the probability measure induced under the map
\begin{align*}
\omega\in\Omega \longmapsto u_{\leq N}^\omega(x)=\sum\limits_{|n| \leq N}\frac{g_n(\omega)}{\langle n\rangle^s}e^{inx}.
\end{align*}
Likewise, $\mu^{\perp}_{s,N}$ is the probability measure induced under the map
\begin{align*}
\omega\in\Omega \longmapsto u_{>N}^\omega(x)=\sum\limits_{|n|> N}\frac{g_n(\omega)}{\langle n\rangle^s}e^{inx}.
\end{align*}
Note that we do not require $F_{s}$ and $F_{s,N}$ to be integrable with respect to $\mu_s$. Hence $\rho_s$ and $\rho_{s,N}$ are not necessarily probability measures. However, as the quasi-invariance argument is purely local (see the proof of Theorem 1.2 (i) below), it suffices to have $F_{s,N}\in L^{1}_{\text{loc}}(\mu_{s})$ and with convergence to $F_{s}$. This is the content of the next proposition, whose proof can be found in \cite[Proposition 2.1]{PVT}.

\begin{proposition}\label{convergence on bounded sets}
Let $s$ be as in \eqref{highalpha2}. Then, for every bounded set $A\subset H^{s-\frac{1}{2}-\eps}(\T)$, we have
\begin{align*}
\lim\limits_{N\rightarrow\infty}\int_{A}|F_{s,N}(u)-F_s(u)|d\mu_s(u)=0
\end{align*}
and in particular
\begin{align*}
\lim\limits_{N\rightarrow\infty}|\rho_{s,N}(A)-\rho_s(A)|=0
\end{align*}
\end{proposition}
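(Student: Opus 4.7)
The plan is to reduce the convergence of $F_{s,N} \to F_s$ in $L^1(\mu_s; A)$ to the quantitative difference estimate for $R_s$ provided by Proposition~\ref{Prop: difference estimate for R_s(v)}, and then close the argument by bounded convergence. The second conclusion is an immediate consequence of the first, since
\begin{align*}
|\rho_{s,N}(A) - \rho_s(A)| \leq \int_A |F_{s,N}(u) - F_s(u)|\, d\mu_s(u).
\end{align*}

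First I would fix $R>0$ such that $\|u\|_{H^{s-\frac 12 -\eps}} \leq R$ for every $u \in A$. Applying Proposition~\ref{Prop: difference estimate for R_s(v)} with the choice $v = \mathbf{P}_{\leq N} u$, and using the trivial bound $\|\mathbf{P}_{\leq N} u\|_{H^{s-\frac 12 -\eps}} \leq \|u\|_{H^{s-\frac 12 -\eps}} \leq R$, yields
\begin{align*}
|R_s(u) - R_s(\mathbf{P}_{\leq N} u)| \leq C R^3 \, \|u - \mathbf{P}_{\leq N} u\|_{H^{s-\frac 12 -\eps}}.
\end{align*}
Taking $v=0$ in the same estimate gives $|R_s(u)|, |R_s(\mathbf{P}_{\leq N} u)| \leq C R^4$, so $F_s(u), F_{s,N}(u) \leq e^{CR^4/2}$ uniformly on $A$. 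Combining these with the elementary inequality $|e^{-a/2} - e^{-b/2}| \leq \tfrac{1}{2}|a-b|(e^{-a/2} + e^{-b/2})$ gives the pointwise bound
\begin{align*}
|F_{s,N}(u) - F_s(u)| \leq C(R) \, \|u - \mathbf{P}_{\leq N} u\|_{H^{s-\frac 12 -\eps}}
\end{align*}
valid for all $u \in A$ and all $N \in \mathbb{N}$.

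To conclude, I would note two facts: (a) for every fixed $u \in A \subset H^{s-\frac 12 -\eps}(\T)$, Parseval's identity gives $\|u - \mathbf{P}_{\leq N} u\|_{H^{s-\frac 12 -\eps}}^2 = \sum_{|n|>N} \langle n\rangle^{2(s-\frac 12 -\eps)} |\hat u_n|^2 \to 0$ as $N \to \infty$, and (b) the right-hand side of the displayed bound is dominated uniformly in $N$ by the constant $2C(R)R$. Since $\mu_s$ is a probability measure and hence $\mu_s(A) \leq 1 < \infty$, the bounded convergence theorem applies and delivers
\begin{align*}
\lim_{N \to \infty} \int_A |F_{s,N}(u) - F_s(u)|\, d\mu_s(u) = 0,
\end{align*}
which completes the proof. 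The only non-trivial ingredient is the difference estimate in Proposition~\ref{Prop: difference estimate for R_s(v)}, which has already been established; once that is in hand, this proposition is essentially a routine convergence argument, with no substantial obstacle to overcome.
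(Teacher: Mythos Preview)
Your proof is correct and follows the natural argument: the paper does not give its own proof of this proposition but simply refers to \cite[Proposition 2.1]{PVT}, and your approach---using the difference estimate from Proposition~\ref{Prop: difference estimate for R_s(v)} to obtain a pointwise bound on $|F_{s,N}(u)-F_s(u)|$ that is uniform on the bounded set $A$, and then closing with bounded convergence---is exactly the standard route. All steps are sound, including the elementary inequality $|e^{-a/2}-e^{-b/2}|\leq \tfrac12|a-b|(e^{-a/2}+e^{-b/2})$ and the observation that $\|u-\mathbf{P}_{\leq N}u\|_{H^{s-\frac12-\eps}}\to 0$ for each fixed $u$ as the tail of a convergent series.
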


The next result states important properties of the truncated flow $\Phi_{N}$.

\begin{proposition}\label{Prop: GWP growth bound}
Let $s$ be as in Proposition~\ref{prop: gwp} be such that the flow $\Phi$ of FNLS~\eqref{FNLS} is globally well-defined.  Then, the following statements hold: 
\begin{enumerate}[\normalfont (i)]
\setlength\itemsep{0.3em}
\item  For every $R>0$ and $T>0$, there exists $C(R,T) >0$ such that 
\begin{align*}
\Phi_N(t)\left(B_R \right)\subset B_{C(R,T)}
\end{align*}
for all $t\in[0,T]$ and for all $N\in \mathbb{N}\cup\{\infty\}$. Here, $\Phi_{\infty}:=\Phi$ denotes the untruncated flow. 
\item Let $A\subset H^{s-\frac{1}{2}-\eps}(\T)$ be a compact set and $t\in\R$. Then, for every $\delta>0$, there exists $N_0\in\mathbb{N}$ such that 
\begin{align*}
\|\Phi(t)(u)-\Phi_{N}(t)(u)\|_{H^{s}}<\dl,
\end{align*}
for any $u\in A$ and any $N\geq N_{0}$. Furthermore, we have
\begin{align*}
\Phi(t)\left(A  \right)\subset \Phi_N (t)\left(A+B_\delta\right)
\end{align*}
 for all $N\geq N_0$.
\end{enumerate}
\end{proposition}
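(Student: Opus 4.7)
The plan is to prove (i) via uniform-in-$N$ a priori bounds combined with iteration, and (ii) via pointwise convergence upgraded to uniform convergence by compactness, together with the group property of $\Phi_N$. Throughout, set $\sigma := s - \tfrac{1}{2} - \eps$, and observe that $\sigma$ exceeds every local well-posedness threshold from Proposition~\ref{prop: gwp} under the standing assumption $s > \max(\tfrac{1}{2}, 1 - \tfrac{\al}{2})$.

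For (i), two structural properties of the truncated equation~\eqref{Truncated equation for v} are key. First, it conserves the $L^2$-mass, since the projection $\mathbf{P}_{\leq N}$ is self-adjoint. Second, the high-frequency component $\mathbf{P}_{>N}\Phi_N(t)u$ evolves under the free linear flow, so $\|\mathbf{P}_{>N}\Phi_N(t)u\|_{H^\sigma}=\|\mathbf{P}_{>N}u\|_{H^\sigma}$ for all $t$. Since $\mathbf{P}_{\leq N}$ is a contraction on every $L^2$-based Sobolev space and on the Bourgain-type solution spaces used in~\cite{Bou,Cho,demirbas2013existence}, the contraction-mapping arguments from those references apply to~\eqref{Truncated equation for v} with estimates uniform in $N$, yielding a local time $T_0$ --- depending only on $\|u\|_{L^2}$ when $\al\geq 1$ and on $\|u\|_{H^\sigma}$ when $\tfrac{1}{2}<\al<1$ --- on which $\|\Phi_N(t)u\|_{H^\sigma}\leq 2\|u\|_{H^\sigma}$. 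Iterating over $[0,T]$ and closing the iteration by mass conservation (in the strongly dispersive case) or by the high-low argument of~\cite{demirbas2013existence} (in the weakly dispersive case) then produces the claimed constant $C(R,T)$ depending only on $R$ and $T$.

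For (ii), the first step is pointwise convergence: for each fixed $u \in H^{\sigma}$, the difference $v_N := \Phi(t)u - \Phi_N(t)u$ satisfies an equation whose forcing involves $\mathbf{P}_{>N}$ applied to the nonlinearity of $\Phi(t)u$, which tends to zero in the relevant solution-space norm as $N\to\infty$ by dominated convergence on the Fourier side; Gronwall combined with the uniform bound from (i) then gives $\|v_N\|_{H^\sigma}\to 0$. To upgrade to uniform convergence on the compact set $A$, cover $A$ by finitely many $H^\sigma$-balls of small radius, apply pointwise convergence at the centres, and use the uniform-in-$N$ Lipschitz continuity of $\Phi_N(t)$ on $B_{C(R,T)}$ --- itself a byproduct of the contraction from (i) --- in a standard approximation argument. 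For the inclusion, given $v = \Phi(t)u_0 \in \Phi(t)(A)$, set $\tilde u := \Phi_N(-t)v$ so that $\Phi_N(t)\tilde u = v$; the identity
\begin{equation*}
\tilde u - u_0 = \Phi_N(-t)\Phi(t)u_0 - \Phi_N(-t)\Phi_N(t)u_0,
\end{equation*}
combined with the Lipschitz bound on $\Phi_N(-t)$ over $B_{C(R,T)}$ and the uniform convergence just established, yields $\|\tilde u - u_0\|_{H^\sigma} < \delta$ for all $N$ large, hence $\tilde u \in A + B_\delta$.

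The main obstacle I anticipate is obtaining contraction-mapping estimates uniformly in $N$ at regularity $\sigma$ in the weakly dispersive regime $\tfrac{1}{2}<\al<1$, where the nonlinear analysis of~\cite{Cho,demirbas2013existence} is delicate. The resolution is that $\mathbf{P}_{\leq N}$ acts as a contraction on every relevant solution space, so the multi-linear estimates underlying both the local theory of~\cite{Cho} and the high-low globalisation of~\cite{demirbas2013existence} transfer to the truncated equation with all implicit constants independent of $N$. Once this is in hand, all steps of (i) and (ii) run in parallel for $\Phi$ and $\Phi_N$, concluding the proof.
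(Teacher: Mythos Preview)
Your proposal is correct and follows essentially the same approach that the paper indicates: the paper does not give a detailed proof but simply defers (i) to the global well-posedness theory and (ii) to the arguments in \cite[Appendix~B]{oh2016quasi}, which proceed exactly as you describe---uniform-in-$N$ contraction estimates, iteration via mass conservation (or the high-low scheme of \cite{demirbas2013existence} when $\tfrac12<\al<1$), pointwise approximation upgraded by compactness, and the group property of $\Phi_N$ for the inclusion. One cosmetic remark: the $H^{s}$ appearing in the first display of part~(ii) in the paper's statement should be read as $H^{\sigma}=H^{s-\frac12-\eps}$, which is the norm you correctly work in throughout.
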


We also have the following local-in-time version of Proposition~\ref{Prop: GWP growth bound}.

\begin{proposition}\label{Prop: LWP growth bound}
Let $s$ be as in Proposition~\ref{prop: gwp} \textup{(ii)} be such that the flow $\Phi$ of FNLS~\eqref{FNLS} is only locally well-defined. Then, the following statements hold: 
\begin{enumerate}[\normalfont (i)]
\setlength\itemsep{0.3em}
\item  Then, for every $R>0$, there exist $T(R)>0$ and $C(R) >0$ such that 
\begin{align*}
\Phi_N(t)\left(B_R \right)\subset B_{C(R)}
\end{align*}
for all $t\in[0,T(R)]$ and for all $N\in \mathbb{N}\cup\{\infty\}$. 
\item Let $A\subset B_{R} \subset H^{s-\frac{1}{2}-\eps}(\T)$ be a compact set and denote by $T(R)>0$ the local existence time of the solution map $\Phi$ defined on $B_{R}$. Then, for every $\delta>0$, there exists $N_0\in\mathbb{N}$, such that 
\begin{align*}
\|\Phi(t)(u)-\Phi_{N}(t)(u)\|_{H^{s}}<\dl,
\end{align*} for any $u\in A$, $N\geq N_0$ and $t\in [0,T(R)]$. Furthermore, we have
\begin{align*}
\Phi(t)\left(A  \right)\subset \Phi_N (t)\left(A+B_\delta\right)
\end{align*}
for all $t\in[0,T(R)]$ and for all $N\geq N_0$.
\end{enumerate}

\end{proposition}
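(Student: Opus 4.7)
The plan is to mirror the proof of the global-in-time analogue, Proposition~\ref{Prop: GWP growth bound}, but replacing its appeal to global well-posedness with the uniform local theory of~\cite{Cho, demirbas2013existence}. The crucial observation is that the $X^{s,b}$-type multilinear estimates used to establish local well-posedness of~\eqref{Equation for v} in $H^{\sigma}(\T)$ with $\sigma \geq \frac{1-\alpha}{2}$ remain valid, with the same constants, after insertion of the frequency projectors $\P_{\leq N}$, since $\P_{\leq N}$ has operator norm $1$ on every $H^{\sigma}(\T)$. Hence the contraction mapping scheme applies to both~\eqref{Equation for v} and the truncated equation~\eqref{Truncated equation for v} simultaneously and uniformly in $N \in \mathbb{N}\cup\{\infty\}$.

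For part~(i), I would run this uniform contraction on $[0, T(R)]$ inside a closed ball of an appropriate restriction-in-time $X^{s-\frac{1}{2}-\eps,b}$-type space whose size is dictated by $R$ alone. Since no $N$-dependent constant is introduced, this simultaneously produces a common local existence time $T(R)$ and an $H^{s-\frac{1}{2}-\eps}$ growth bound $C(R)$ valid for all $N \in \mathbb{N}\cup\{\infty\}$, which is exactly~(i).

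For part~(ii), the two flows share the same linear evolution $S(t)$, so the Duhamel formulation gives
\begin{equation*}
\Phi(t)(u_0) - \Phi_{N}(t)(u_0) = \int_{0}^{t} S(t-t')\bigl\{(\Id - \P_{\leq N})\N(\Phi(t')(u_0)) + \P_{\leq N}\bigl[\N(\Phi(t')(u_0)) - \N(\P_{\leq N}\Phi_{N}(t')(u_0))\bigr]\bigr\}\,dt',
\end{equation*}
where $\N(v) = (|v|^{2} - 2\fint_{\T}|v|^{2}\,dx)v$. Applying the trilinear $X^{s-\frac{1}{2}-\eps,b}$ estimate on $[0,T(R)]$, the first bracketed term is $o_{N}(1)$ by high-frequency smallness. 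The second term is trilinear in the difference $\Phi(t') - \Phi_{N}(t')$ plus a factor bilinear in solutions times $(\Id-\P_{\leq N})\Phi_{N}(t')$; the former is absorbed by a Gronwall argument, while the latter tends to zero uniformly for $u_{0} \in A$ by compactness of $A$ in $H^{s-\frac{1}{2}-\eps}(\T)$ and the uniform bound from~(i). For the inclusion $\Phi(t)(A) \subset \Phi_{N}(t)(A+B_{\delta})$, I exploit reversibility of the truncated flow: given $v = \Phi(t)(u_{0})$ with $u_{0} \in A$, set $w := \Phi_{N}(-t)(v)$ so that $\Phi_{N}(t)(w) = v$. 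The uniform-in-$N$ Lipschitz continuity of $\Phi_{N}(-t)$ on $B_{C(R)}$ (another output of the contraction argument) combined with the first step yields
\begin{equation*}
\|w - u_{0}\|_{H^{s-\frac{1}{2}-\eps}} = \|\Phi_{N}(-t)(\Phi(t)(u_{0})) - \Phi_{N}(-t)(\Phi_{N}(t)(u_{0}))\|_{H^{s-\frac{1}{2}-\eps}} \lesssim \|\Phi(t)(u_{0}) - \Phi_{N}(t)(u_{0})\|_{H^{s-\frac{1}{2}-\eps}} < \delta,
\end{equation*}
so $w \in A + B_{\delta}$ as required.

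The only substantive obstacle is bookkeeping: one must ensure that all constants (the local existence time, the growth bound, and the Lipschitz constants of $\Phi_{N}(\pm t)$ on the relevant balls) are genuinely independent of $N$. This reduces to carefully tracking the multilinear estimates of~\cite{Cho, demirbas2013existence} through the projectors $\P_{\leq N}$, where no new $N$-dependence can enter. The passage from pointwise convergence $\Phi_{N}(t)(u_{0}) \to \Phi(t)(u_{0})$ to uniform convergence on the compact set $A$ is a standard consequence of these uniform Lipschitz bounds, closing the argument.
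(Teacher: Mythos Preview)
Your proposal is correct and aligns with the paper's own treatment: the paper does not give a self-contained proof of this proposition, instead noting that part~(i) follows from the local existence theory (for short times) of~\cite{Cho, demirbas2013existence} and that part~(ii) follows from the arguments in~\cite[Appendix B]{oh2016quasi}. Your outline fills in precisely these details---uniform contraction in $X^{s-\frac{1}{2}-\eps,b}$-type spaces for~(i), and a Duhamel difference estimate combined with reversibility of $\Phi_{N}$ for~(ii)---which is exactly the content of those references; the only cosmetic point is that in your final displayed inequality the Lipschitz constant of $\Phi_{N}(-t)$ appears as an implicit constant, so one should first choose $N_{0}$ so that $\|\Phi(t)(u_{0}) - \Phi_{N}(t)(u_{0})\|_{H^{s-\frac{1}{2}-\eps}} < \delta/L$ rather than $<\delta$.
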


The proof of Proposition~\ref{Prop: GWP growth bound}~(i) follows from the global well-posedness of FNLS~\eqref{FNLS} when $\al>\tfrac{10\al+1}{12}$, while the proof of Proposition~\ref{Prop: LWP growth bound}~(i) follows by the local existence theory (for short times) when $\tfrac 12 < \al \leq \tfrac{10\al+1}{12}$. The proof of Proposition~\ref{Prop: GWP growth bound}~(ii) follows from the arguments in \cite[Appendix B]{oh2016quasi} using the existence theory in Appendix~\ref{app:gwp}, \cite{Bou} and \cite{Cho}.

\begin{proof}[Proof of Theorem~\ref{Main result} (i)]
In the following we fix $s$ and $\alpha$ satisfying the conditions of Propositions \ref{Prop: Energy Estimate} and \ref{Prop: difference estimate for R_s(v)}. As long as the conclusions of these propositions are satisfied, the following general argument due to \cite{PVT} implies the quasi-invariance of $\mu_{s}$ (either globally or locally in time). For clarity, we will only detail the following arguments in the case when FNLS~\eqref{FNLS} admits a globally well-defined flow $\Phi$ (see Proposition~\ref{prop: gwp}). We obtain local-in-time quasi-invariance from the same arguments by suitably restricting to the local well-posedness lifetime where necessary. 

Given $t>0$, by the inner regularity of the measure $\mu_s$, it is enough to show that
\begin{equation}\label{quasi for ms}
A\subset H^{s-\frac{1}{2}-\eps}\textnormal{ compact and }\mu_s(A)=0 \implies \mu_s(\Phi(-t)A)=0.
\end{equation}
From Proposition \ref{Prop: difference estimate for R_s(v)} with $u=0$, we have $0<\exp\left(R_s(v)\right)<\infty$ for almost all $v\in A$. Hence the implication \eqref{quasi for ms} is equivalent to the following implication:
\begin{align*}
A\subset H^{s-\frac{1}{2}-\eps}\textnormal{ compact and }\rho_s(A)=0 \implies \rho_s(\Phi(-t)A)=0.
\end{align*}
As $A$ is compact, there exists $R>0$ such that $A\subset B_R$. Then, by Proposition~\ref{Prop: GWP growth bound}, there exists a constant $C(R)>0$ such that
\begin{equation}\label{conseq of LWP bound}
\Phi(\tau)\left(B_{2R}\right)\cup \Phi_{N}(\tau)\left(B_{2R}\right)\subset B_{C(R)}
\end{equation}
for all $\tau \in [0,t]$.
For a measurable $D\subset B_{2R}$, it follows from \eqref{alternative expression}, Liouville's theorem and the invariance of complex Gaussians under rotations, that
\begin{align*}
\left| \frac{d}{d\tau}\rho_{s,N}(\Phi_N(\tau)(D))  \right| &= \left| \frac{d}{d\tau} Z_{s,N}^{-1}\int_{\Phi_N(\tau)(D)}\exp\left(-\tfrac{1}{2}E_s(\mathbf{P}_{\leq N} u) \right)du_{\leq N}\times d\mu_{s,N}^\perp  \right| \\
&=\left| Z_{s,N}^{-1}\int_{D}\frac{d}{d\tau}\exp\left(-\tfrac{1}{2}E_s(\Phi(\tau)(\mathbf{P}_{\leq N} u)) \right)du_{\leq N}\times d\mu_{s,N}^\perp  \right|.
\end{align*}
Using the energy estimate of Proposition \ref{Prop: Energy Estimate} along with \eqref{conseq of LWP bound} we have
\begin{align*}
\left|\frac{d}{d\tau}\exp\left(-\tfrac{1}{2}E_s(\Phi(\tau)(\mathbf{P}_{\leq N} u)) \right)\right|\leq C(R) \exp\left(-\tfrac{1}{2}E_s(\Phi(\tau)(\mathbf{P}_{\leq N} u))\right)
\end{align*}
for all $\tau\in [0,t]$ and for all $u\in D$.
Combining the above we have
\begin{align*}
\left| \frac{d}{d\tau}\rho_{s,N}(\Phi_N(\tau)(D))  \right|&\leq Z^{-1}_{s,N}C(R)\int_{D}\frac{d}{d\tau}\exp\left(-\tfrac{1}{2}E_s(\Phi(\tau)(\mathbf{P}_{\leq N} u)) \right)du_{\leq N}\times d\mu_{s,N}^\perp\\
&=C(R)\rho_{s,N}(\Phi_N(\tau)(D)).
\end{align*}
From Gronwall's inequality, we get 
\begin{equation}\label{Gronwall consequence}
\rho_{s,N}(\Phi_N(\tau)(D))\leq e^{C(R)\tau}\rho_{s,N}(D)
\end{equation}
for all $N\in N$ and for all $\tau\in[0,t]$.
By Proposition \ref{Prop: GWP growth bound}~(ii), we have
\begin{align*}
\rho_{s}(\Phi_N(\tau)(A)) \leq \rho_{s}\left(\Phi_N(\tau)(A+B_\delta)\right)
\end{align*}
for any fixed $\delta>0$ and $N$ large enough. Further, from Proposition \ref{convergence on bounded sets} for $N$ large enough, we have
\begin{align*}
\rho_{s}\left(\Phi_N(\tau)(A+B_\delta)\right) \leq \rho_{s,N}\left(\Phi_N(\tau)(A+B_\delta)\right)+\delta
\end{align*}
and so
\begin{align*}
\rho_{s}(\Phi_N(\tau)(A))\leq \rho_{s,N} (\Phi_N(\tau)(A+B_\delta))+\delta.
\end{align*}
Choosing $\delta<R$ so that $A+B_\delta\subset B_{2R}$ and \eqref{Gronwall consequence}, can be applied we get
\begin{align*}
\rho_{s}(\Phi_N(\tau)(A))\leq e^{C(R)\tau}\rho_{s,N}(A+B_\delta)+\delta.
\end{align*}
Using Proposition \ref{convergence on bounded sets} to go from $\rho_{s,N}$ back to $\rho_s$, we have
\begin{equation}
\rho_{s}(\Phi_N(t)(A))\leq e^{C(R)t}\rho_{s}(A+B_\delta)+2\delta.
\end{equation}
Letting $\delta$ approach $0$ and using regularity properties of the measure $\mu_s$, we finally obtain
\begin{align*}
\rho_{s}(\Phi_N(\tau)(A))\leq e^{C(R)\tau}\lim\limits_{\delta\rightarrow 0} \rho_{s}(A+B_\delta)=e^{C(R)\tau}\rho_{s}(A)=0
\end{align*}
for any $\tau\in [0,t]$.
This completes the proof.
\end{proof}

\section{Improvement for $\al >\frac{5}{6}$}

In this section, we employ the hybrid argument (Method 4) from \cite{hybrid} in order to lower the regularity threshold we previously obtained using Method 3.
Namely, we complete the proofs of Theorem~\ref{Main result}~\eqref{resultals} and Theorem~\ref{thm:localqi} by proving the quasi-invariance of Gaussian measures $\mu_{s}$ under the flow of FNLS~\eqref{FNLS} for regularities satisfying \eqref{M4REG}.

\subsection{Alternative energy estimate}

Our first port of call is to obtain an energy estimate where we place two factors into the Fourier-Lebesgue space $\FL^{\s,\infty}(\T)$, where $\s<s$. By placing these two factors into this stronger norm, we can lower the regularity restriction; compare \eqref{highalpha} and \eqref{scond}.

\begin{proposition} \label{prop:weakenergyest}
Let $\al>\frac{5}{6}$ and 
\begin{align}
\max\bigg( \frac{2}{3}, \frac{11}{6}-\al \bigg) < s\leq 1. \label{scond}
\end{align}
Then, for sufficiently small $\eps>0$, there exists $C>0$ such that 
\begin{align}
\bigg|  \frac{d}{dt}E_{s}(\P_{\leq N}v(t))\Big\vert_{t=0}    \bigg|  \leq C\|\P_{\leq N}v(0)\|_{\FL^{s-\tilde{\eps},\infty}}^{2} \|\P_{\leq N}v(0)\|_{H^{s-\frac{1}{2}-\eps}}^{4}, \label{energyFLq2}
\end{align}
for any $N\in \mathbb{N}$, any solution $v$ to \eqref{Truncated equation for v} and for any $0<\tilde{\eps}<\eps$, uniformly in $t\in \R$.
\end{proposition}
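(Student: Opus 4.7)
The proof will closely parallel that of Proposition~\ref{Prop: Energy Estimate}, the only new ingredient being the pointwise bound $|\widehat{w}_n(0)|\leq\jb{n}^{-(s-\tilde\eps)}\|w(0)\|_{\FL^{s-\tilde\eps,\infty}}$ available at each frequency $n\in\Z$. Since the linear propagator $S(t)$ acts by a phase on each Fourier mode, and therefore isometrically on both $H^{s-\frac12-\eps}(\T)$ and $\FL^{s-\tilde\eps,\infty}(\T)$, and since $w(0)=v(0)$, I may reduce to bounding $|\mathcal{N}_1(w)+\mathcal{R}_1(w)+\mathcal{N}_2(w)+\mathcal{R}_2(w)|$ from \eqref{derivenergy} evaluated at $t=0$, where the oscillatory factors $e^{it\phi(\cj n)}$ drop out. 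The goal is to control this sum by $\|w\|_{\FL^{s-\tilde\eps,\infty}}^2\|w\|_{H^{s-\frac12-\eps}}^4$.

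Next I would run the same case-by-case decomposition as in the $\tfrac12<s\leq 1$ portion of the proof of Proposition~\ref{Prop: Energy Estimate}, splitting according to which of $|n_1|,|n_2|,|n_3|,|n|$ realises $n_{\max}$. In each case, I select two Fourier coefficients, typically those corresponding to the two largest frequencies, and replace them by the pointwise decay $\jb{\cdot}^{-(s-\tilde\eps)}$ furnished by the $\FL$-bound, while estimating the remaining four factors in $H^{\s}$ with $\s:=s-\frac12-\eps$ via Cauchy--Schwarz and Young's inequality exactly as before. For the non-resonant cases (the analogues of Cases~1--7 in that proof), the existing argument already operated under the constraints $\al>\tfrac56$ and $s>\tfrac{11}{6}-\al$, and substituting the stronger pointwise $\FL$-bound for the previous $\ell^\infty$-style estimates only improves summability; these cases therefore close under the hypotheses of the proposition.

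The genuinely new work lies in the fully resonant regime $|n|\sim|n_1|\sim|n_2|\sim|n_3|$, the analogue of Case~8. There Lemma~\ref{lemma: phase lower bound} yields only $|\phi(\cj n)|\gtrsim|n-n_1||n-n_3|n_{\max}^{2\al-2}$, and in Subcase~8.2 the previous argument required $s+\al>\tfrac{25}{12}$. Placing two extra pointwise $\jb{\cdot}^{-(s-\tilde\eps)}$ factors from the $\FL$-norm on two of the $w_{n_j}$'s, and then invoking the divisor-counting bound \eqref{divisorcount} together with Lemma~\ref{lemma:sumestimate} in the spirit of Subcase~8.2, I expect the resulting sum to converge precisely under $\al+s>\tfrac{11}{6}$, giving the desired $\tfrac14$ improvement in the regularity threshold relative to Proposition~\ref{Prop: Energy Estimate}. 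The analogous estimates for $\mathcal{R}_1,\mathcal{R}_2$ are strictly easier since those are already diagonal in one index and generate no inner convolution.

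The main obstacle will be the bookkeeping across the various resonant and near-resonant subcases to ensure that the $\FL^{s-\tilde\eps,\infty}$-norm is never applied to more than two factors, and that the pointwise $\FL$-bound interacts correctly with the Young/Cauchy--Schwarz machinery on the inner variables $m_1,m_2,m_3$ appearing inside $\mathcal{N}_1,\mathcal{N}_2$---though the softness of the $\ell^\infty_{n_1}$ reduction \eqref{linfty3} should accommodate this. The upper bound $s\leq 1$ is a convenient restriction since $s>1$ is already covered by Proposition~\ref{Prop: Energy Estimate}, and it places no substantive constraint on the argument itself.
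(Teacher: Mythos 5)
Your proposal is correct and follows essentially the same route as the paper. You rightly identify that Cases 1--7 of the proof of Proposition~\ref{Prop: Energy Estimate} already close under \eqref{scond} and can be promoted via the embedding \eqref{FLembed} (the paper does exactly this rather than re-running the pointwise bounds), and that the genuine gain is confined to the fully resonant regime $|n|\sim|n_1|\sim|n_2|\sim|n_3|$, where replacing two $\jb{\cdot}^{-\s}$ factors by $\jb{\cdot}^{-(s-\tilde\eps)}$ in the Subcase~8.2 divisor-counting argument raises the exponent $\nu$ from $2\al+2s-\tfrac{25}{6}$ to $2\al+2s-\tfrac{19}{6}$, so that the requirement $\nu>\tfrac12$ becomes $\al+s>\tfrac{11}{6}$, precisely the $\tfrac14$ improvement you predict; the $\mathcal{R}_j$ terms are indeed subsumed since $\tfrac54-\tfrac12\al\leq\max(\tfrac23,\tfrac{11}{6}-\al)$.
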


\begin{proof}
Using \eqref{modifiedenergy}, the estimate \eqref{energyFLq2} reduces to proving that for small $\eps>0$, there exists $C>0$ such that
\begin{align}
\bigg|  \frac{d}{dt}E_{s,t}(\P_{\leq N}w(t))\Big\vert_{t=0}    \bigg|  \leq C\|\P_{\leq N}w(0)\|_{\FL^{s-\tilde{\eps},\infty}}^{2} \|\P_{\leq N}w(0)\|_{H^{s-\frac{1}{2}-\eps}}^{4}. \label{energyFLq1}
\end{align}
From \eqref{derivenergy}, \eqref{N1}, \eqref{R1}, \eqref{N2} and \eqref{R2}, \eqref{energyFLq1}  further reduces to showing
\begin{equation}
\bigg| \sum_{j=1}^{2} \mathcal{N}_{j}(\P_{\leq N}w(0))+\sum_{j=1}^{2}\mathcal{R}_{j}( \P_{\leq N}w(0))\bigg| \leq C\|\P_{\leq N}w(0)\|_{\FL^{s-\tilde{\eps},\infty}}^{2} \norm{\P_{\leq N}w(0)}_{H^{s-\frac{1}{2}-\eps}}^4
\label{energyFLq}
\end{equation}
for all $N\in \mathbb{N}$ and uniformly in $t\in \R$.
Recalling the decomposition \eqref{derivenergy}, we estimate $\mathcal{N}_1$ and $\mathcal{R}_1$, with estimates for $\mathcal{N}_{2}$ and $\mathcal{R}_2$ following analogously. In the following, we simply replace $w(0)$ by $w$.
We consider $\mathcal{N}_{1}$ first. 
Recall that in Cases 1 through 7 of the proof of Proposition~\ref{Prop: Energy  Estimate}~(ii) and (iii), we obtained 
 \begin{align}
|\N_{1}(w)|\les \|w\|_{H^{s-\frac 12 -\eps}}^{6} \label{61}
\end{align}
for any $\al > \frac{5}{6}$ and for any $s$ satisfying 
\begin{align}
1\geq s>\max \bigg( \frac 23 , \frac{11}{6}-\al \bigg). \label{rest1}
\end{align}
 Then in these cases, we obtain \eqref{energyFLq} by using the embedding \eqref{FLembed} to put two factors of \eqref{61} into the required Fourier-Lebesgue space. Note that we could certainly improve upon the regularity lower bound on $s$ in these cases by proving \eqref{energyFLq} `directly.' 
 However, we find that consideration of the remaining case
  $|n|\sim |n_1|\sim |n_2| \sim |n_3|$ yields a restriction on $s$ given by \eqref{scond}.
We now describe this remaining case.
To simplify notation, we drop the frequency projections $\P_{\leq N}$. Furthermore, 
we let $\s=s-\frac 12 -\eps$ and we set
 $\tilde{w}_n=\jb{n}^{\s}w_{n}$ and ${\bf w}_n = \jb{n}^{s-\tilde{\eps}}w_n$.  
We employ the argument (and the notation) from subcase 8.2 in the proof of Proposition~\ref{Prop: Energy  Estimate}.
We have 
\begin{align*}
|\N_1(w)| & \les \sum_{n}\sum_{\rho\neq 0} \sum_{\G(n,\rho)} \frac{n_{\max}^{2s}}{|\rho| n_{\max}^{2\al-2}}  \frac{{\bf w}_{n_2}  {\bf w}_{n_3} \tilde{w}_{n}}{\jb{n}^{\s}\jb{n_1}^{\s-\tfrac 16}  \jb{n_2}^{s-\tilde{\eps}}\jb{n_3}^{s-\tilde{\eps}}} \bigg\| \sum_{\G(n)} \jb{m_1}^{\s-\frac 16} w_{m_1}\cj{w_{m_2}}w_{m_3} \bigg\|_{\l^{\infty}_{n_1}} \\
& \les \|w\|_{H^\s}^{3} \| w\|_{\FL^{s-\tilde{\eps},\infty}}^{2} \sum_{n}\sum_{\rho\neq 0} \sum_{\G(n,\rho)} \frac{1}{|\rho|n_{\max}^{\nu}}  \tilde{w}_{n},
\end{align*}
where 
\begin{align*}
\nu = 2\al+2s-\frac{19}{6}>0,
\end{align*}
provided $\al+s> \tfrac{19}{12}$.
Then, by Cauchy-Schwarz and the divisor counting lemma~\eqref{divcount}, we bound the above by
\begin{align*}
& \|w\|_{H^\s}^{4} \|w\|_{\mathcal{F}L^{s-\tilde{\eps},\infty}}^{2} \bigg( \sum_{n} \sum_{\G(n)} \frac{1}{|n-n_1|^{1-2\dl}|n-n_3|^{1-2\dl}n_{\max}^{2\nu}}     \bigg)^{\frac 12} \\
& \les  \|w\|_{H^{\s}}^{4} \|w\|_{\mathcal{F}L^{s-\tilde{\eps},\infty}}^2 \bigg( \sum_{n} \frac{1}{\jb{n}^{2\nu-6\dl}} \bigg)^{\frac{1}{2}}.
\end{align*}
Summing this requires $\nu>\tfrac{1}{2}$ which restricts us further to enforcing
\begin{align*}
\al + s > \frac{11}{6},
\end{align*}
completing the proof for $\mathcal{N}_1$.

Now, recall from the proof of Proposition~\ref{Prop: Energy Estimate} that we obtained the estimate 
\begin{align}
|\mathcal{R}_{1}(w)|\les \|\P_{\leq N}w\|_{H^{s-\frac 12 -\eps}}^{6}, \label{r1bdfl}
\end{align}
for $s>\max\Big(\tfrac 54 -\tfrac 12 \al, \tfrac 23 \Big)$ and $\al>\tfrac 12$. Since 
\begin{align*}
\max \bigg(  \frac{2}{3}, \frac{11}{6}-\al, \frac 54 -\frac 12 \al \bigg)=\max \bigg(  \frac{2}{3}, \frac{11}{6}-\al\bigg)
\end{align*}
for any $\al \in \R$, then we may simply use \eqref{FLembed} on two factors of \eqref{r1bdfl} to obtain \eqref{energyFLq} for $\mathcal{R}_{1}$. This completes the proof of \eqref{energyFLq}.

\end{proof}

\subsection{Construction of weighted Gaussian measures}\label{subsec:probmeas}

In this section, we construct weighted Gaussian measures which are adapted to the modified energy $E_{s}(v)$. Our attention is only on the low regularity setting $\frac 12 < s \leq 1$ and high enough dispersion ($\al \geq \tfrac 56 $), since the results in Section 4 established quasi-invariance when $s>1$ for any $\al>\tfrac 12$.

 Given $r>0$ and $N\geq 1$, we first wish to construct the measure 
\begin{align*}
d\rho_{s,N,r}(v)=Z_{s,N,r}^{-1}\ind_{ \{ \|v\|_{L^{2}}\leq r\}} e^{-\frac{1}{2}R_{s,N}(\P_{\leq N}v)}d\mu_{s}(v) 
\end{align*} 
and then, by taking $N\rightarrow \infty$, construct the measure
\begin{align*}
d\rho_{s,r}(v)=Z_{s,r}^{-1}\ind_{\{\|v\|_{L^{2}}\leq r\}} e^{-\frac{1}{2}R_{s}(v)}d\mu_{s}(v),
\end{align*}
where we recall
\begin{align*}
R_{s}(v)&= -\frac{1}{2}\text{Re} \sum_{\G(\cj n)} \frac{\psi_{s}(\cj n)}{ \phi(\cj n)} v_{n_1}\cj{v}_{n_2}v_{n_3}\cj{v}_{n_4}, 
\end{align*} 
and we define
\begin{align*}
R_{s,N}(v)&:= -\frac{1}{2}\text{Re} \sum_{\G_{N}(\cj n)} \frac{\psi_{s}(\cj n)}{ \phi(\cj n)} v_{n_1}\cj{v}_{n_2}v_{n_3}\cj{v}_{n_4}.
\end{align*}
We set 
\begin{align*}
F_{N,r}(v)= \ind_{ \{ \|v\|_{L^{2}}\leq r\}} e^{-\frac{1}{2}R_{s,N}(\P_{\leq N}v)} \,\, \,\,\text{and} \,\,\,\,
F_{r}(v)=\ind_{ \{ \|v\|_{L^{2}}\leq r\}} e^{-\frac{1}{2}R_{s}(v)}.
\end{align*}

The main result of this subsection is the following proposition which states, not only that the probability measure $\rho_{s,r}$ exists, but that we have `good' uniform $L^{p}$ bounds on the density for $\rho_{s,N,r}$ (see \eqref{unifNbd} below). Such higher $L^{p}$ bounds are crucial for the hybrid argument in \cite{hybrid} (see Lemma~\ref{lemma:flpdeviation} and Proposition~\ref{prop:diffeq}).

\begin{proposition}\label{prop:lpbound} Let $r>0$, $\al \geq \frac{3}{4}$ and $\max(\tfrac{5-4\al}{2}, \tfrac 12)< s\leq 1$.  Then, given $p<\infty$, there exists $C>0$ such that 
\begin{align}
\|F_{r}(v)\|_{L^{p}(\mu_s)}, \, \| F_{N,r}(v)\|_{L^{p}(\mu_s)} \leq C_{p,r,s,\al}, \label{unifNbd}
\end{align}
uniformly in $N\in \mathbb{N}$. Furthermore, there exists $R_s(v)\in L^{p}(\mu_s)$ such that 
\begin{align}
\lim_{N\rightarrow \infty} R_{s,N}(\P_{\leq N}v)=R_{s}(v) \qquad \text{in} \,\,\,\, L^{p}(\mu_s) \label{LpconvR}
\end{align}
and 
\begin{align}
\lim_{N\rightarrow \infty} F_{N,r}(v)=F_{r}(v)\qquad \text{in} \,\,\,\, L^{p}(\mu_s). \label{LpconvF}
\end{align}

\end{proposition}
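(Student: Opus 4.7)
The plan is to establish the three claims in the order (i) $L^p(\mu_s)$-convergence of $R_{s,N}(\P_{\leq N}v)$ to a limit $R_s(v)$, (ii) the uniform $L^p(\mu_s)$-bound on $F_{N,r}$ and $F_r$, and (iii) the $L^p(\mu_s)$-convergence $F_{N,r}\to F_r$. The guiding principle is that $R_{s,N}(\P_{\leq N}v)$ is a polynomial of degree at most four in the Gaussian variables $\{g_n\}$, so by Nelson's hypercontractivity for Wiener chaos, every $L^p$-bound for $p\geq 2$ reduces to the corresponding $L^2$-bound up to a factor of $(p-1)^2$; accordingly, the non-trivial work is at the $L^2$-level.

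For (i), I would compute $\|R_{s,N_1}(\P_{\leq N_1}v)-R_{s,N_2}(\P_{\leq N_2}v)\|_{L^2(\mu_s)}^2$ via Wick's theorem. The expectation of the eight-fold product of Gaussian factors reduces, up to finitely many Wick pairings, to a diagonal-contraction sum
\[
\sum_{\G(\cj n)}\frac{\psi_s(\cj n)^2}{\phi(\cj n)^2\,\jb{n_1}^{2s}\jb{n_2}^{2s}\jb{n_3}^{2s}\jb{n}^{2s}}
\]
restricted to the shell $\{\max|n_j|>\min(N_1,N_2)\}$. Inserting the mean-value bound $|\psi_s(\cj n)|\les n_{\max}^{2s-1}(|n-n_1|+|n-n_3|)$, the phase lower bound $|\phi(\cj n)|\ges |n-n_1||n-n_3|n_{\max}^{2\al-2}$ from Lemma~\ref{lemma: phase lower bound}, and the summation bound Lemma~\ref{lemma:sumestimate}, this iterated series collapses to a convergent numerical sum precisely when $s>\tfrac{5-4\al}{2}$, and its tail vanishes as $\min(N_1,N_2)\to\infty$. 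Cauchyness in $L^2(\mu_s)$ then lifts via hypercontractivity to Cauchyness in every $L^p(\mu_s)$, producing the limit $R_s(v)\in L^p(\mu_s)$ together with a uniform $L^p$-bound on $R_{s,N}(\P_{\leq N}v)$.

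The hard part will be (ii). The naked Wiener chaos tail bound $\mu_s(|R_{s,N}|>\lambda)\les e^{-c\lambda^{1/2}}$, appropriate for chaos of order four, is too slow to place $e^{-pR_{s,N}/2}$ in $L^1(\mu_s)$, so the mass cutoff $\ind_{\{\|v\|_{L^2}\leq r\}}$ must be used in an essential way. I would aim to prove a pathwise multilinear estimate of the schematic form
\[
|R_{s,N}(\P_{\leq N}v)|\leq C\,\|\P_{\leq N}v\|_{L^2}^{4-\theta}\,\|\P_{\leq N}v\|_{X}^{\theta}
\]
with $\theta<2$ and $X$ a space lying strictly inside the support of $\mu_s$ (for instance $X=\FL^{\s,\infty}$ with $\s<s$, cf.~\eqref{FLembed}). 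Such an estimate is obtained by apportioning at least two of the four tensor factors to $L^2$, using the smoothing from $\psi_s/\phi$ together with Lemma~\ref{lemma: phase lower bound} and the divisor count \eqref{divcount}, in the spirit of the $\al\geq 5/4$ subcase of Proposition~\ref{Prop: Energy Estimate}; this is precisely where the threshold $s>(5-4\al)/2$ would emerge. Combined with the cutoff, this yields $|R_{s,N}|\leq C_r\|v\|_X^{\theta}$ on the cutoff set, and since $\|v\|_X$ has Gaussian tails under $\mu_s$ and $\theta<2$, $\exp(\tfrac{p}{2}C_r\|v\|_X^{\theta})\in L^1(\mu_s)$ for every $p<\infty$, giving the claimed uniform $L^p$-bound on $F_{N,r}$. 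The corresponding bound on $F_r$ then follows from (i) and Fatou's lemma.

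Finally, (iii) is a soft consequence of (i) and (ii): an almost-surely convergent subsequence $R_{s,N_k}\to R_s$ (extracted from the $L^p$-convergence in (i)) gives $F_{N_k,r}\to F_r$ almost surely by continuity of the exponential, and Vitali's theorem upgrades this to $L^p(\mu_s)$-convergence, with uniform integrability supplied by the $L^{2p}$-bound from (ii) and H\"older's inequality; a standard double-subsequence argument then removes the subsequence. The principal obstacle remains the multilinear estimate in (ii) with $\theta<2$, since a brute force Cauchy-Schwarz gives only $\theta=4$ and $\exp(c\|v\|_X^{4})$ is not $\mu_s$-integrable for any $X$ in the support of $\mu_s$; the extraction of two genuine $\|v\|_{L^2}$-factors must fully exploit the dispersive smoothing of $\psi_s/\phi$, which is the same mechanism responsible for the restriction $\al\geq \frac 34$.
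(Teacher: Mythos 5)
Your plan for parts (i) and (iii) is sound: hypercontractivity for fourth-order Wiener chaos is a legitimate alternative to the paper's deterministic difference estimates (cf.\ \eqref{conv61}, \eqref{conv62}) combined with Egoroff's theorem, and the Vitali/double-subsequence argument for (iii) is interchangeable with what the paper does. You also correctly identified that the mass cutoff $\ind_{\{\|v\|_{L^2}\leq r\}}$ is essential, and that a naked chaos tail bound of order $e^{-c\lambda^{1/2}}$ is useless.

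The gap is in (ii), and it is a real one. Your strategy hinges on a pathwise multilinear estimate
\begin{equation*}
|R_{s,N}(\P_{\leq N}v)| \leq C\,\|\P_{\leq N}v\|_{L^2}^{4-\theta}\,\|\P_{\leq N}v\|_{X}^{\theta}
\qquad\text{with}\qquad \theta<2,
\end{equation*}
because only then does $\exp\bigl(\tfrac{p}{2}C_r\|v\|_X^\theta\bigr)$ remain $\mu_s$-integrable for \emph{every} $p<\infty$ when $\|v\|_X$ has Gaussian tails. But such an estimate is not proved in the paper and almost certainly fails in the regime of interest ($\tfrac34 \leq \al \leq \tfrac32$, $\tfrac12<s\leq 1$). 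What the paper actually proves is Lemma~\ref{lemma:Rsnbd}, namely $|R_s(v)|\lesssim\|v\|_{L^2}\|v\|_{H^\g}^3$ with $\g>0$ --- that is, $\theta=3$, with one factor (not two or more) in $L^2$. With $\theta=3$ and Gaussian tails on $\|v\|_{H^\g}$, the quantity $\mathbb{E}\bigl[\exp(\lambda\|v\|_{H^\g}^3)\bigr]$ diverges for every $\lambda>0$, so the route you describe dead-ends. The paper rescues the situation by an entirely different mechanism: it fixes a scale $M_0$ through $\log\ld=2^{3/2}C_0M_0^{3\g}r^4$, dyadically decomposes $\|v\|_{H^\g}^2$ over shells $M_j=2^jM_0$, and applies the Gaussian deviation estimate of Lemma~\ref{LEM:g} shell by shell. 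Crucially, on high shells the $H^\g$-mass decays like $M_j^{\g-s}$, so the per-shell tail $e^{-c\,2^{j(2s-2\g-\frac23\eps)}(\log\ld)^{2s/(3\g)}}$ aggregates to a bound of the form $\exp\bigl(-c''_r(\log\ld)^{2s/(3\g)}\bigr)$, which is super-polynomially decaying in $\ld$ precisely because $\g<\tfrac{2s}{3}$. This is where the conditions $s-\g>\tfrac12$, $\g<\tfrac{2s}{3}$, $\g>\max(0,\tfrac{2s+1-2\al}{3},\tfrac14+s-\al)$ intersect and yield $\al\geq\tfrac34$ and $s>\max(\tfrac{5-4\al}{2},\tfrac12)$. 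In short, the paper compensates for a weak pathwise exponent $\theta=3$ with a much sharper multi-scale tail estimate; your plan replaces this with a strong pathwise exponent $\theta<2$ that is not there to be had.

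A secondary issue: your proposed multilinear estimate is sketched by analogy with the $\al\geq\tfrac54$ divisor-counting case of Proposition~\ref{Prop: Energy Estimate}, which addresses a six-linear expression, and via the DMVT bound $|\psi_s(\cj n)|\lesssim n_{\max}^{2s-2}|n-n_1||n-n_3|$, which requires $s>1$. The present proposition lives entirely in $s\leq 1$, where only the cruder mean-value bounds are available, and the cases where $n_{\max}$ is attained by $n_2$ or $n$ (where there is no factor $|n-n_1||n-n_3|$ to cancel both small divisors) are precisely where putting more than one factor in $L^2$ runs into an unsummable sum. You should adopt the paper's Lemma~\ref{lemma:Rsnbd} ($\theta=3$) together with the dyadic tail argument via Lemma~\ref{LEM:g} in place of the pathwise $\theta<2$ estimate.
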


In order to prove Proposition~\ref{prop:lpbound} by employing the argument in \cite[Proposition 6.2]{oh2016quasi}, we need the following bound. Note that we define $R_{s,\infty}(v)=R_{s}(v)$.
\begin{lemma}\label{lemma:Rsnbd}
Let $\al > \frac 12$ and $\frac{1}{2}<s\leq 1$. Then for any 
\begin{align*}
\g> \max\bigg(0, \frac{2s+1-2\al}{3},\frac{1}{4}+s-\al \bigg), 
\end{align*} we have
\begin{align}
|R_{s,N}(\P_{\leq N}v)| \les \|\P_{\leq N}v\|_{L^{2}}\| \P_{\leq N}v\|_{H^{\g}}^{3}, \label{Rbound}
\end{align} uniformly in $N\in \mathbb{N}\cup \{ \infty\}$. In particular, if $\al>\frac 32$, we may take $\g\equiv 0$ in \eqref{Rbound}.
\end{lemma}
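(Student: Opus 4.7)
The strategy is to bound the multilinear form $R_{s,N}$ pointwise in Fourier space and then apply Cauchy--Schwarz in a configuration that places one $L^{2}$-norm and three $H^{\g}$-norms on the four frequency variables. By the Fourier lattice property we may assume all Fourier coefficients are non-negative.

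The key step is the multiplier bound for $m(\cj n):=|\psi_{s}(\cj n)|/|\phi(\cj n)|$. The denominator is controlled from below by Lemma~\ref{lemma: phase lower bound}, giving $|\phi(\cj n)|\ges |n-n_{1}||n-n_{3}|n_{\max}^{2\al-2}$. For the numerator, since $\tfrac12<s\leq 1$ (so $2s\leq 2$) we cannot use Lemma~\ref{Lemma: DMVT application}; instead we combine the identities $n-n_{1}=n_{3}-n_{2}$ and $n-n_{3}=n_{1}-n_{2}$ (valid on $\G(\cj n)$) with the mean value theorem applied to $x\mapsto \jb{x}^{2s}$ to obtain $|\psi_{s}(\cj n)|\les n_{\max}^{2s-1}\min(|n-n_{1}|,|n-n_{3}|)$. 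In the fully resonant regime we retain the trivial bound $|\psi_{s}(\cj n)|\les n_{\max}^{2s}$.

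Following the case division in the proof of Proposition~\ref{Prop: Energy Estimate}, we split $\G(\cj n)$ according to which of $|n|,|n_{1}|,|n_{2}|,|n_{3}|$ attains $n_{\max}$, distribute $\jb{n_{j}}^{\g}$-weights among three of the four frequencies (leaving the fourth to carry the $\|\P_{\leq N}v\|_{L^{2}}$ factor), then apply Cauchy--Schwarz twice together with Lemma~\ref{lemma:sumestimate}. In the non-resonant subcases, one of $|n-n_{1}|,|n-n_{3}|$ is comparable to $n_{\max}$, providing extra cancellation; after summation this produces the threshold $\g>(2s+1-2\al)/3$. In the fully resonant subcase $|n|\sim|n_{1}|\sim|n_{2}|\sim|n_{3}|$, the relevant multiplier becomes $m(\cj n)\les n_{\max}^{2s-2\al+2}/(|n-n_{1}||n-n_{3}|)$; mirroring subcase~8.2 of Proposition~\ref{Prop: Energy Estimate}, we use the divisor-counting bound~\eqref{divcount} applied to the level sets of $(n-n_{1})(n-n_{3})$ together with Lemma~\ref{lemma:sumestimate}, which yields the sharper threshold $\g>\tfrac14+s-\al$.

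Finally, both thresholds $(2s+1-2\al)/3$ and $\tfrac14+s-\al$ are strictly negative when $\al>\tfrac32$ (using $s\leq 1$), and by inspection the endpoint $\g=0$ is admissible in that regime, giving the claimed $|R_{s,N}|\les \|\P_{\leq N}v\|_{L^{2}}^{4}$. The main technical obstacle is the fully resonant subcase: without the divisor-counting device one loses a factor of $n_{\max}$ relative to ordinary Cauchy--Schwarz, so the interplay between the $(n-n_{1})(n-n_{3})$-singularity and the $n_{\max}^{2s-2\al+2}$-growth is precisely what determines the sharp condition on $\g$.
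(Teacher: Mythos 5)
The proposal correctly identifies the mean-value-theorem refinement $|\psi_{s}(\cj n)|\les n_{\max}^{2s-1}\min(|n-n_{1}|,|n-n_{3}|)$, but then discards it precisely in the one case where it is the crucial tool, and the substitute you propose does not reach the stated threshold. In the fully resonant case $|n|\sim|n_1|\sim|n_2|\sim|n_3|$, the paper uses no divisor counting at all. Instead one keeps the MVT bound $|\psi_{s}(\cj n)|\les |n-n_3|\,n_{\max}^{2s-1}$ and cancels the $|n-n_3|$ against the $|n-n_3|$ in the lower bound $|\phi(\cj n)|\ges |n-n_1||n-n_3|\,n_{\max}^{2\al-2}$ from Lemma~\ref{lemma: phase lower bound}. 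This leaves only the single singular weight $\frac{1}{|n-n_1|\,n_{\max}^{\nu}}$ with $\nu=2\al-1-2s+3\g$, and one closes with Cauchy--Schwarz and Lemma~\ref{lemma:sumestimate}, yielding the constraint $\g>\max(0,\tfrac{2s+1-2\al}{3})$ in this case.

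Your route --- the trivial bound $|\psi_{s}(\cj n)|\les n_{\max}^{2s}$ together with divisor counting on $\rho=(n-n_1)(n-n_3)$ as in Subcase~8.2 of the proof of Proposition~\ref{Prop: Energy Estimate} --- instead produces the weight $\frac{1}{|\rho|\,n_{\max}^{\nu}}$ with $\nu=2\al-2-2s+3\g$, and the argument requires $\nu>0$, i.e.\ $\g>\tfrac{2+2s-2\al}{3}$. This is strictly larger than $\tfrac14+s-\al$ whenever $s\leq1$, since $\tfrac{2+2s-2\al}{3}-\bigl(\tfrac14+s-\al\bigr)=\tfrac{5-4s+4\al}{12}>0$; so you do not obtain the claimed sharper threshold. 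It also breaks the final assertion of the lemma: at $s=1$ your threshold $\tfrac{4-2\al}{3}$ is negative only for $\al>2$, whereas the statement needs $\g\equiv0$ to be admissible for every $\al>\tfrac32$.

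Finally, the threshold $\g>\tfrac14+s-\al$ is misattributed: it does not arise from the fully resonant case. It comes from the non-fully-resonant cases (for instance $|n|\sim|n_1|\gg|n_2|,|n_3|$), where the Cauchy--Schwarz step produces the pair of summability conditions $2(2\al-2s+2\g)>1$ and $2\al-2s+3\g>1$; the first is exactly $\g>\tfrac14+s-\al$ and the second is $\g>\tfrac{2s+1-2\al}{3}$. Once you restore the MVT bound in the resonant case and drop the divisor counting, the correct thresholds fall out in each case as stated.
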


\begin{proof}

Notice that we have a symmetry with respect to the interchange of $n_1$ and $n_3$ and the interchange of $n_2$ and $n$. 
We split the proof of \eqref{Rbound} into a few cases with the remaining cases following analogously by exploiting this symmetry.  Below we prove \eqref{Rbound} for $N=\infty$ as it is clear how to adjust the argument when $N\in\mathbb{N}$.
We write $\tilde{v}_{n}:=\jb{n}^{\g} v_{n}$.

\medskip

\noi
$\bullet$ \textbf{Case 1:} $|n_1|\sim|n| \gg |n_2|, |n_3|$

In this case, it is clear from Lemma~\ref{lemma: phase lower bound} that 
\begin{align*}
|\phi(\cj n)| \gtrsim n_{\max}^{2\al-1}|n-n_1|,
\end{align*}
and from the mean value theorem,
\begin{align*}
|\psi(\cj n)|\les n_{\max}^{2s-1}|n-n_1|.
\end{align*}
Hence by Cauchy-Schwarz,
\begin{align*}
|R_{s}(v)| & \les \bigg( \sum_{\G(\cj n)}\frac{\tilde{v}_{n_1}^{2}}{ n_{\max}^{2(2\al-2s+2\g)}\jb{n_3}^{2\g}}\bigg)^{\frac{1}{2}} \| v\|_{H^{\g}}^{2}\|v\|_{L^{2}}  \les  \|v\|_{L^{2}}\| v\|_{H^{\g}}^{3},
\end{align*}
provided 
\begin{align}
2(2\al-2s+2\g)>1 \qquad \text{and} \qquad   2\al-2s+3\g>1. \label{gammacond}
\end{align}

\medskip

\noi
$\bullet$ \textbf{Case 2:} $|n_2|\sim|n| \gg |n_1|, |n_3|$

Using $|\phi(\cj n)|\gtrsim n_{\max}^{2\al}$ and applying Cauchy-Schwarz as in the previous case, we obtain \eqref{Rbound} provided $\g$ satisfies \eqref{gammacond}.

\medskip

\noi
$\bullet$ \textbf{Case 3:} $|n_1|\sim|n_2|\sim |n_3|\sim |n|$

On $\G(\cj n)$, we have
\begin{align*}
|\psi_{s}(\cj n)| \les |n-n_3|n_{\max}^{2s-1} \qquad \text{and} \qquad |\phi(\cj n)| \ges |n-n_3||n-n_1|n_{\max}^{2\al-2}.
\end{align*}
With $\g\geq 0$ to be determined, we have 
\begin{align*}
|R_{s}(v)| & \les \sum_{\G(\cj n)} \frac{n_{\max}^{2s-1}}{|n-n_1|n_{\max}^{2\al -2}} \frac{\tilde{v}_{n_1} v_{n_2} \tilde{v}_{n_3}\tilde{v}_{n}}{\jb{n_1}^{\g}\jb{n_3}^{\g}\jb{n}^{\g}} \\
& \les \sum_{\G(\cj n)} \frac{1}{|n-n_1| n_{\max}^{\nu}} \tilde{v}_{n_1} v_{n_2}v_{n_3}\tilde{v}_{n},
\end{align*}
where $\nu = 2\al-1-2s+3\g>0$
provided 
\begin{align*}
\g>\max \bigg(0, \frac{2s+1-2\al}{3} \bigg).
\end{align*}
By the Cauchy-Schwarz inequality and Lemma~\ref{lemma:sumestimate}, we have 
\begin{align*}
& \les \bigg( \sum_{n, n_1,n_3}\frac{ \tilde{v}_{n}^{2}\tilde{v}_{n_3}^{2} }{\jb{n-n_1}^{1+\dl} }\bigg)^{\frac 12}\bigg( \sum_{n_2,n_1,n} \frac{v_{n_2}^{2}\tilde{v}_{n_1}^{2}}{ \jb{n-n_3}^{1-\dl}\jb{n}^{2\dl}} \bigg)^{\frac 12} \\
& \les  \|v\|_{L^{2}}\| v\|_{H^{\g}}^{3}.
 \end{align*}

Notice from the condition $2\al-1-2s+2\g>0$, that if $\al >\frac{3}{2}$, we can take $\g=0$. This completes the proof. 
\end{proof}

We also require the following probabilistic estimate, see \cite[Lemma 6.4]{oh2016quasi}.

\begin{lemma}\label{LEM:g}
Let $\{ g_{n}\}_{n\in \Z}$ be independent standard complex-valued Gaussian random variables. Then, there exist $c,C>0$ such that, for any $M\geq 1$, we have
\begin{align*}
\mathbb{P}\bigg[ \bigg(  \sum_{n=1}^{M}|g_n |^{2} \bigg)^{\frac 12} \geq K \bigg] \leq e^{-cK^2},
\end{align*}
provided $K\geq CM^{\frac 12}$.
\end{lemma}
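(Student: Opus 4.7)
The plan is to reduce the statement to a standard sub-exponential/chi-squared tail bound via the Chernoff method. Writing $g_n = X_n + iY_n$ with $X_n, Y_n$ independent real $N(0,1)$ random variables (this matches the normalisation $\text{Var}(g_n)=2$ fixed in \eqref{gaussiandata}), the variable
\[
S_M := \sum_{n=1}^{M} |g_n|^{2} = \sum_{n=1}^{M}(X_n^{2}+Y_n^{2})
\]
is a sum of $2M$ independent squared standard real Gaussians, i.e.\ it is $\chi^{2}$-distributed with $2M$ degrees of freedom. The event $\{(\sum_{n=1}^M|g_n|^2)^{1/2}\geq K\}$ is precisely $\{S_M\geq K^{2}\}$, so it suffices to show $\mathbb{P}[S_M\geq K^2]\leq e^{-cK^2}$ for $K\geq CM^{1/2}$.

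First I would compute the moment generating function. Using independence and the elementary Gaussian identity $\mathbb{E}[e^{tX^2}]=(1-2t)^{-1/2}$ for $t<1/2$, we obtain, for all $t\in(0,1/2)$,
\[
\mathbb{E}\bigl[e^{tS_M}\bigr]=\prod_{n=1}^{M}\mathbb{E}\bigl[e^{tX_n^{2}}\bigr]\,\mathbb{E}\bigl[e^{tY_n^{2}}\bigr]=(1-2t)^{-M}.
\]
Applying the exponential Markov (Chernoff) inequality at a fixed $t\in(0,1/2)$ to be chosen, we get
\[
\mathbb{P}\bigl[S_M\geq K^{2}\bigr]\leq e^{-tK^{2}}\mathbb{E}\bigl[e^{tS_M}\bigr]=e^{-tK^{2}}(1-2t)^{-M}.
\]

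Finally I would optimise the free parameter. Fixing for concreteness $t=1/4$, the right-hand side becomes $2^{M}e^{-K^{2}/4}=\exp(M\log 2-K^{2}/4)$. Choosing $C>0$ so that $C^{2}\log 2\leq 1/8$ (e.g.\ $C=(8\log 2)^{-1/2}$, up to rescaling), the hypothesis $K\geq CM^{1/2}$ gives $M\log 2\leq K^{2}/8$, and hence
\[
\mathbb{P}\bigl[S_M\geq K^{2}\bigr]\leq \exp\bigl(-K^{2}/8\bigr),
\]
which is the desired estimate with $c=1/8$.

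There is essentially no obstacle here: the lemma is a textbook Gaussian concentration bound, and the only minor point to watch is the complex normalisation ($|g_n|^{2}$ contributes two real squares, hence $2M$ degrees of freedom), which affects only the numerical constants $c$ and $C$ but not the form of the estimate. As an alternative route, one could instead invoke Wiener chaos hypercontractivity applied to the second-chaos element $S_M-2M$ together with Chebyshev in $L^{p}$ and optimise in $p$, but the direct Chernoff argument above is shorter and yields the same conclusion.
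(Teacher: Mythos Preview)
The paper does not actually prove this lemma; it simply cites \cite[Lemma~6.4]{oh2016quasi}. Your Chernoff argument is the standard and correct way to obtain such a $\chi^{2}$ tail bound, and it is almost certainly what the cited reference does as well.

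There is one small arithmetic slip to fix: from $K\geq CM^{1/2}$ you get $M\leq K^{2}/C^{2}$ and hence $M\log 2\leq K^{2}(\log 2)/C^{2}$; for this to be at most $K^{2}/8$ you need $C^{2}\geq 8\log 2$, not $C^{2}\log 2\leq 1/8$. Correspondingly the explicit choice should be $C=\sqrt{8\log 2}$ rather than $(8\log 2)^{-1/2}$. This does not affect the argument in any substantive way.
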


We now give the proof of Proposition~\ref{prop:lpbound}.

\begin{proof}[Proof of Proposition~\ref{prop:lpbound}]
For $\al>\tfrac 32$, Lemma~\ref{lemma:Rsnbd} implies we may take $\g\equiv 0$ in \eqref{Rbound} and hence
\begin{align*}
 \ind_{\{\|v\|_{L^2}\leq r\}} |R_{s,N}(\P_{\leq N}v)| \les \ind_{\{\|v\|_{L^2}\leq r\}}\|\P_{\leq N}v\|_{L^{2}}^{4}\les r^{4}, 
 \end{align*} 
 at which point, the bound \eqref{unifNbd} follows trivially.
We make up the remaining case $\frac{3}{4}\leq \al \leq \frac 32$ in the following.
Given $\frac{1}{2}<s\leq 1$, let $\g$ be as in Lemma~\ref{lemma:Rsnbd} whose precise value will be specified later.
 On $\{ \|v\|_{L^{2}}\leq r\}$, \eqref{Rbound} implies 
\begin{align*}
|R_{s}(\P_{\leq M_{0}}v)|\leq C_{0}r\|\P_{\leq M_0}v\|_{H^{\g}}^{3} \leq C_{0}M_{0}^{3\g}r^{4}.
\end{align*}
We have 
\begin{align}
\|F_{r}(v)\|_{L^{p}(d\mu_s)}^{p}& \leq C^{p} +p\int_{\max(e,e^{2^{\frac 32}C_0 r})}^{\infty} \ld^{p-1}\mu_{s}( |R_{s}(v)|\geq \log \ld, \,\,\, \|v\|_{L^2}<r )  d\ld. \label{lc1}
\end{align}
We choose $M_0 >0$ such that 
\begin{align}
\log \ld = 2^{\frac 32} C_0 M_0^{3\g}r^{4}. \label{logld}
\end{align}
For $j\in \mathbb{N}$, let $M_j =2^{j} M_0$ and $\s_j=C_{\eps}2^{-\eps j} = CM_{0}^{\eps}M_{j}^{-\eps}$ for some small $\eps>0$ such that $\sum_{j=1}^{\infty} \s_{j} =\frac{1}{2}$. 
Then we have 
\begin{align*}
\mu_{s}( |R_{s}(v)|\geq \log \ld, \,\,\, \|v\|_{L^2}<r ) & \leq \mu_{s}( \|v\|_{H^{\g}}^{2}\geq (C_{0}^{-1}r^{-1}\log \ld)^{\frac 23} ) \\
& \leq \sum_{j=1}^{\infty} \mu_{s}( \|\P_{M_j}v\|_{H^{\g}}^{2} \geq \s_{j}(C_{0}^{-1}r^{-1}\log \ld)^{\frac 23}) \\
& \les \sum_{j=1}^{\infty} \mathbb{P} \bigg(  \bigg(\sum_{|n|\sim M_j} |g_n|^{2} \bigg)^{\frac 12} \gtrsim L_{j}  \bigg),
\end{align*}
where $L_{j}:=(C_{0}^{-1}r^{-1}\log \ld )^{\frac 13}\s_{j}^{\frac{1}{2}} M_j^{s-\g}\gtrsim M_{0}^{\frac 12 \eps}M_{j}^{s-\g-\frac{1}{2}\eps}\gg M_{j}^{\frac{1}{2}}$, provided $s-\g>\frac{1}{2}$. 
We used here that $\ld> e^{2^{\frac{3}{2}}C_0 r}$ implies, from \eqref{logld}, $M_{0}^{\g}r \geq 1$ and hence $(C_0^{-1}r^{-1}\log \ld)^{2/3} \sim M_{0}^{4\g}r^4 \gtrsim 1$.
Therefore, by Lemma~\ref{LEM:g}, we have
\begin{align*}
\mu_{s}( |R_{s}(v)|\geq \log \ld, \,\,\, \|v\|_{L^2}<r ) & \les \sum_{j=1}^{\infty} e^{-c_{r}2^{j(2s-2\g -\frac{2}{3}\eps)} (\log \ld)^{ \frac{2}{3}+\frac{2}{3}\frac{ s-\g}{\g}  }}\\
& \les e^{-c''_{r} (\log \ld)^{ \frac{2s}{3\g} }}.
\end{align*}
Thus, from \eqref{lc1}, we have 
\begin{align*}
\|F_{r}(v)\|_{L^{p}(d\mu_s)}^{p}& \les C_{p}+p\int_{C}^{\infty} e^{p\ld}e^{-c_{r}'' \ld^{\frac{2s}{3\g}}}d\ld <C<\infty,
\end{align*}
provided $\frac{2}{3}s>\g$. It is clear that the above arguments also apply to obtain the uniform bound \eqref{unifNbd} when $N\in \mathbb{N}$.

Thus we can construct the measure $\rho_{s,r}$ provided we can choose $\g\in \R$ satisfying
\begin{align*}
\max\bigg( 0, \frac{2s+1-2\al}{3}, \frac 14 +s-\al \bigg) <\g <\min\bigg( s-\frac{1}{2}, \frac{2}{3}s\bigg)=s-\frac 12.
\end{align*} 
As we wish to consider $s$ close to $1$, we must impose $\al>\tfrac 34$ to rule out the maximum on the left hand side being $\tfrac{1}{4}+s-\al$.
Now, if $s+\frac 12 -\al \leq 0$, it is clear we can pick a $\g>0$. 
Otherwise, if $s+\frac{1}{2}-\al\geq 0$, we can choose $\g>0$ as long as 
\begin{align*}
\frac{2s+1-2\al}{3}<s-\frac 12,
\end{align*}
which upon rearranging yields the condition $\max(\tfrac{5-4\al}{2}, \tfrac 12)< s\leq 1$.

As for the $L^{p}(\mu_{s})$ convergence of $R_{s,N}(\P_{\leq N} v)$ and $F_{N,r}(v)$, we note that when $\al >\frac{3}{2}$, we have 
\begin{align}
|R_{s,N}(\P_{\leq N}(v))-R_{s}(v)| \les \|\P_{>N}v\|_{L^{2}}\|v\|_{L^{2}}^{3}. \label{conv61}
\end{align}
By a slight modification of \eqref{Rbound}, when $\tfrac 12 <\al <\tfrac 32$, we also have 
\begin{align}
|R_{s,N}(\P_{\leq N}(v))-R_{s}(v)| \les \|\P_{>N}v\|_{L^{2}}\|v\|_{H^{\g}}^{3}+\|v\|_{L^2}\|v\|_{H^\g}^2 \|\P_{>N}v\|_{H^\g}. \label{conv62}
\end{align}
Taking $N\rightarrow \infty$ in \eqref{conv61} and \eqref{conv62} and noting that $s-\g>\tfrac{1}{2}$ shows $R_{s,N}(\P_{\leq N}v)$ converges almost surely with respect to $\mu_{s}$ to $R_{s}(v)$. Then because of the uniform in $N$ bounds 
\begin{align*}
\| R_{s,N}(\P_{\leq N}v)\|_{L^{p}(\mu_s)},\|R_{s}(v)\|_{L^{p}(\mu_s)}\leq C_{p,s}<\infty,
\end{align*}
which follow from Lemma~\ref{lemma:Rsnbd}, a standard argument using Egoroff's theorem implies \eqref{LpconvR} (see \cite[Proposition 6.2]{oh2016quasi}).

From \eqref{conv61} and \eqref{conv62}, we have almost sure convergence of $F_{r,N}(v)$ to $F_{r}(v)$ with respect to $\mu_{s}$. Using \eqref{unifNbd}, the above standard argument implies convergence in $L^p(\mu_s)$; namely \eqref{LpconvF}.
This completes the proof of Proposition~\ref{prop:lpbound}.

\end{proof}

The next lemma shows that the two factors lying in $\FL^{\s,\infty}$ for $\s<s$ in the modified energy estimate \eqref{energyFLq} have moments indeed contributing a factor of $p^{\frac 12}$. Notice that as a consequence
of Proposition~\ref{prop:lpbound} (namely, $Z_{s,N,r}\rightarrow Z_{s,r}$ as $N\rightarrow \infty$), $Z_{s,N,r}^{-1}$ is bounded uniformly in $N\in \mathbb{N}$.

\begin{lemma} \label{lemma:flpdeviation}
Given $\eps>0$ and $r>0$, there exists $C=C(\eps,r)>0$ such that 
\begin{align*}
\big\|  \| f \|_{\FL^{s-\eps,\infty}} \big\|_{L^{p}(\rho_{s,N,r})} \leq Cp^{\frac 12}
\end{align*}
for any $p\geq 1$ and $N\in \mathbb{N}$.
\end{lemma}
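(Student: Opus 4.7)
The plan is to use the density bound from Proposition~\ref{prop:lpbound} in combination with H\"{o}lder's inequality to transfer the computation to the base Gaussian measure $\mu_{s}$, and then exploit standard Gaussian moment estimates on the explicit series \eqref{gaussiandata}. Specifically, since $Z_{s,N,r}^{-1}$ is bounded uniformly in $N$ (because $Z_{s,N,r}\to Z_{s,r}\neq 0$ by \eqref{LpconvF}) and $\|F_{N,r}\|_{L^{2}(\mu_{s})}\leq C(r,s,\alpha)$ uniformly in $N$ by \eqref{unifNbd}, for any $p\geq 1$ H\"{o}lder's inequality with conjugate exponents $(2,2)$ yields
\begin{align*}
\big\| \|f\|_{\FL^{s-\eps,\infty}} \big\|_{L^{p}(\rho_{s,N,r})}^{p}
&=Z_{s,N,r}^{-1}\int \|f\|_{\FL^{s-\eps,\infty}}^{p} F_{N,r}(f)\, d\mu_{s} \\
&\leq Z_{s,N,r}^{-1}\, \|F_{N,r}\|_{L^{2}(\mu_{s})}\, \big\| \|f\|_{\FL^{s-\eps,\infty}} \big\|_{L^{2p}(\mu_{s})}^{p}.
\end{align*}

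Thus matters reduce to showing $\| \|f\|_{\FL^{s-\eps,\infty}} \|_{L^{2p}(\mu_{s})}\lesssim_{\eps} p^{\frac{1}{2}}$. Under $\mu_{s}$ the random distribution is given by \eqref{gaussiandata}, so $\ft f(n)=g_{n}/\jb{n}^{s}$ and
\begin{equation*}
\|f\|_{\FL^{s-\eps,\infty}}=\sup_{n\in \Z}\jb{n}^{-\eps}|g_{n}|.
\end{equation*}
For $q\geq q_{0}$ with $q_{0}\eps>1$, crude enlargement of the supremum and the standard Gaussian moment bound $\mathbb{E}[|g_{n}|^{q}]\leq C^{q}q^{q/2}$ give
\begin{equation*}
\mathbb{E}_{\mu_{s}}\big[ \|f\|_{\FL^{s-\eps,\infty}}^{q}\big]
\leq \sum_{n\in \Z}\jb{n}^{-q\eps}\,\mathbb{E}[|g_{n}|^{q}]
\leq C^{q}q^{q/2}\sum_{n\in \Z}\jb{n}^{-q_{0}\eps}
=C_{\eps}\,C^{q}q^{q/2},
\end{equation*}
where the inequality $\jb{n}^{-q\eps}\leq \jb{n}^{-q_{0}\eps}$ for $q\geq q_{0}$ was used to make the tail sum convergent and $q$-independent. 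Taking the $q$-th root gives $\| \|f\|_{\FL^{s-\eps,\infty}} \|_{L^{q}(\mu_{s})}\leq C' q^{\frac{1}{2}}$, and specialising to $q=2p$ (assuming without loss of generality $2p\geq q_{0}$; smaller $p$ are absorbed by monotonicity of $L^{p}$-norms in probability spaces) yields the claim.

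I do not anticipate a genuine obstacle: the only thing to be careful about is that the reduction via H\"{o}lder must not inject a $p$-dependent constant that spoils the $p^{1/2}$ growth, which is guaranteed by splitting with the fixed exponent pair $(2,2)$ rather than a $p$-dependent one, so that the density factor contributes a constant $C(r,s,\alpha)^{1/p}\leq C(r,s,\alpha)$ for $p\geq 1$. The whole argument is uniform in $N$ because both $\|F_{N,r}\|_{L^{2}(\mu_s)}$ and $Z_{s,N,r}^{-1}$ are bounded uniformly in $N$, and the tail computation for $\sup_{n}\jb{n}^{-\eps}|g_{n}|$ is performed directly on the full Gaussian measure $\mu_{s}$ and does not see $N$ at all.
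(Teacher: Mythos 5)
Your proof is correct and follows essentially the same strategy as the paper: reduce to the base Gaussian measure $\mu_s$ via H\"older and the uniform $L^{p}(\mu_s)$ control on $F_{N,r}$ and $Z_{s,N,r}^{-1}$ from Proposition~\ref{prop:lpbound}, then extract the $p^{\frac12}$ growth from \eqref{LPgauss} and the summability $\sum_n \jb{n}^{-q_0\eps}<\infty$. The only real technical divergence is in how the supremum over $n$ is handled: the paper first embeds $\FL^{s-\eps,\infty}\hookleftarrow\FL^{s-\eps,q}$ for fixed $q>\tfrac{1}{\eps}$, applies H\"older with exponents $(q,q')$, and then uses Minkowski's integral inequality to pull the $\l^q_n$ norm outside the $L^{pq}(\O)$ norm, whereas you bound $\sup_n\jb{n}^{-q\eps}|g_n|^q$ directly by $\sum_n\jb{n}^{-q\eps}|g_n|^q$ (i.e.\ $\l^\infty\leq\l^1$) and take expectations term-by-term with a fixed H\"older pair $(2,2)$. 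Your version is a touch more elementary, avoiding Minkowski's integral inequality, while the paper's is perhaps cleaner in that the $q$-dependence is isolated in a single implicit constant; both are sound and both correctly handle the $p$-uniformity of the constants (including the caveat for small $p$, which you address via monotonicity of $L^p$ norms for the probability measure $\rho_{s,N,r}$).
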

\begin{proof}
Applying the uniform bound \eqref{unifNbd}, the uniform bound on $Z_{s,N,r}^{-1}$ and  Minkowski's integral inequality, for any $q>\tfrac{1}{\eps}$, we have
\begin{align*}
\big\|  \|f\|_{\FL^{s -\eps,\infty}} \big\|_{L^{p}(\rho_{s,N,r})} & \leq \big\|  \|f\|_{\FL^{s -\eps,q}} \big\|_{L^{p}(\rho_{s,N,r})}\\
& \leq Z_{s,N,r}^{-\frac 1p}\| F_{N,r} \|_{L^{q'}(\mu_s)}^{\frac 1p} \big\| \|f\|_{\FL^{s -\eps,q}} \big\|_{L^{pq}(\mu_s)}^{\frac{1}{p}} \\
& \les_{q} \big\| \jb{n}^{-\eps} \|g_{n}\|_{L^{pq}(\O)} \big\|_{\l^{q}_n} \\
& \les_{q} q^{\frac 12}p^{\frac 12},
\end{align*} where we have used the following well-known estimate on higher moments of Gaussian random variables in the last inequality: 
\begin{align}
\|g_{n}\|_{L^{p}(\O)}\les p^{\frac{1}{2}} \label{LPgauss}
\end{align}
for any $p\geq 2$.
\end{proof}

\subsection{Transport of the truncated weighted Gaussian measures}

In this subsection, we study how the measures $\rho_{s,N,r}$ evolve under the flow of the truncated equation \eqref{Truncated equation for v}. We follow the method of \cite{hybrid} in which we use a `change of variables formula' (see Lemma~\ref{lemma:cov}) to make the modified energy $E_{s}$ along the truncated flow appear. 
Taking a time derivative and using the estimate \eqref{energyFLq} then gives a differential inequality for the evolution of $\rho_{s,N,r}$ under $\Phi_{N}$ (see Proposition~\ref{prop:diffeq}).

\begin{lemma}[Change of variables formula]  \label{lemma:cov}
Let $\al \geq \frac{5}{6}$, $s$ be as in \eqref{scond} and $r>0$. 
Then for any $N\in \mathbb{N}$, $t\in \R$ and measurable set $A \subset  H^{s-\frac 12-\eps}(\T)$, we have 
\begin{align}
\rho_{s,N,r}(\Phi_{N}(t)(A))={\ft Z}^{-1}_{s,N}\int_{A} \ind_{\{\|v\|_{L^2}\leq r\}}\, e^{-E_{s,N}(\P_{\leq N}\Phi_{N}(t)(v))} du_{\leq N}\times d\mu_{s,N}^{\perp}. \label{changeofvar}
\end{align}

\end{lemma}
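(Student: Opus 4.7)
The plan is to carry out a pointwise change of variables $v \mapsto \Phi_N(t)(v)$ inside $\rho_{s,N,r}(\Phi_N(t)(A)) = \int \ind_{\Phi_N(t)(A)}(v)\, F_{N,r}(v)\, d\mu_s(v)$. First, I would decompose the Gaussian measure along the orthogonal splitting $v = \P_{\leq N}v + \P_{>N}v$ as
\[
d\mu_s = Z_{s,N}^{-1} e^{-\frac12 \|\P_{\leq N}v\|_{H^s}^2}\, du_{\leq N} \times d\mu_{s,N}^{\perp},
\]
which is the same factorisation as in \eqref{alternative expression}. The aim is then to transfer the integration from $\Phi_N(t)(A)$ back to $A$ and to combine the Gaussian $H^s$-weight with the correction term $-\tfrac12 R_{s,N}$ appearing in $F_{N,r}$ into the modified energy $E_{s,N}$ evaluated at $\P_{\leq N}\Phi_N(t)v$.

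The key structural observation is that the truncated equation \eqref{Truncated equation for v} decouples cleanly across the cut-off frequency $N$. Because of the outer $\P_{\leq N}$ projection on the nonlinearity, the high modes $\{v_n\}_{|n|>N}$ satisfy the free equation $i\partial_t v_n = -|n|^{2\al} v_n$ and evolve simply by the unitary phase rotation $v_n \mapsto e^{it|n|^{2\al}} v_n$, while the low modes $\{v_n\}_{|n|\leq N}$ satisfy a finite-dimensional Hamiltonian ODE on $\mathbb{C}^{2N+1}$. From this decoupling one obtains the three invariances that drive the change of variables:
\begin{enumerate}[\normalfont (i)]
\setlength\itemsep{0.2em}
\item The high-mode Gaussian $d\mu_{s,N}^{\perp}$ is invariant under the phase rotations on each high frequency, since a complex Gaussian is invariant under rotations.
\item By Liouville's theorem applied to the Hamiltonian ODE on $\mathbb{C}^{2N+1}$, the Lebesgue measure $du_{\leq N}$ is preserved by the low-mode flow.
\item Mass conservation for \eqref{Truncated equation for v} gives $\|\Phi_N(t)v\|_{L^2} = \|v\|_{L^2}$, so the indicator $\ind_{\{\|v\|_{L^2}\leq r\}}$ is unaffected by the substitution.
\end{enumerate}

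Applying $v \mapsto \Phi_N(t)(v)$ and using \textup{(i)}--\textup{(iii)}, the only factors that transform non-trivially are the low-mode $H^s$-weight and $R_{s,N}(\P_{\leq N}v)$, which become $\|\P_{\leq N}\Phi_N(t)v\|_{H^s}^2$ and $R_{s,N}(\P_{\leq N}\Phi_N(t)v)$ respectively, noting that $\P_{\leq N}\Phi_N(t)v$ depends only on $\P_{\leq N}v$ by the decoupling above. Combining these into a single exponent of the form $-E_{s,N}(\P_{\leq N}\Phi_N(t)v)$ and absorbing the product of normalisation constants $Z_{s,N,r}^{-1} Z_{s,N}^{-1}$ into $\ft Z_{s,N}^{-1}$ yields \eqref{changeofvar}. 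The main point requiring care is the verification of the Hamiltonian structure of the low-mode truncation so that Liouville's theorem applies cleanly; this is routine for the gauged truncated cubic FNLS on $\mathbb{C}^{2N+1}$, but should be checked on \eqref{Truncated equation for v} directly.
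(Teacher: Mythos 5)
Your proposal matches the approach the paper (implicitly) takes: the paper omits the proof of Lemma~\ref{lemma:cov}, citing \cite{hybrid, oh2016quasi, oh2017quasi}, and names exactly the ingredients you use---Liouville's theorem for the low-mode Lebesgue measure under the truncated flow, $L^2$-conservation for the indicator, rotation invariance of the high-mode Gaussian, and bijectivity of $\Phi_N$. Your explicit observation that \eqref{Truncated equation for v} decouples across the cutoff (high modes evolving by the linear phase rotation, low modes by a finite-dimensional Hamiltonian ODE) is a correct and helpful elaboration of what is left implicit in the paper, and your bookkeeping---factoring $d\mu_s$ via \eqref{alternative expression}, substituting, and recombining the $H^s$-weight with $R_{s,N}$ into $E_{s,N}$---is the right computation.
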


We omit the proof of Lemma~\ref{lemma:cov} as it is identical to those in \cite{hybrid, oh2016quasi, oh2017quasi}. The core ingredients are the invariance of the Lebesgue measure $L_{N}$ under the truncated flow $\Phi_{N}$ (because of Liouville's theorem), the invariance of $\Phi_{N}$ in the $L^{2}$-norm (mass conservation) and the bijectivity of the flow $\Phi_{N}$. When $\al\geq 1$, \eqref{changeofvar} also holds for any measurable $A \subset L^{2}(\T)$.

\begin{proposition}\label{prop:diffeq}
Let $\al \geq \frac{5}{6}$ and $s$ be as in \eqref{scond}. Then, given $r,R>0$ and $T>0$, there exists $C_{r,R,T}>0$ such that 
\begin{align}
\frac{d}{dt}\rho_{s,N,r}(\Phi_{N}(t)(A))\leq C_{r,R,T} \cdot p \,  \{ \rho_{s,N,r}(\Phi_{N}(t)(A)) \}^{1-\frac 1p} \label{diffeq1}
\end{align}
for any $p\geq 2$, any $N\in \mathbb{N}$, any $t\in [0,T]$ and any measurable set $A\subset B_{R}\subset  H^{\s}(\T)$.
\end{proposition}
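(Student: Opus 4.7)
The plan is to combine the change-of-variables formula of Lemma~\ref{lemma:cov} with the energy estimate of Proposition~\ref{prop:weakenergyest} and the moment bound of Lemma~\ref{lemma:flpdeviation}, in the style of \cite{hybrid}. First, by Lemma~\ref{lemma:cov},
\begin{equation*}
\rho_{s,N,r}(\Phi_{N}(t)(A))
=\tilde Z^{-1}_{s,N}\int_{A} \ind_{\{\|v\|_{L^{2}}\leq r\}}\,
e^{-E_{s,N}(\P_{\leq N}\Phi_{N}(t)(v))}\,du_{\leq N}\times d\mu_{s,N}^{\perp}.
\end{equation*}
Differentiating in $t$ brings out a factor of $\frac{d}{dt}E_{s,N}(\P_{\leq N}\Phi_{N}(t)(v))$. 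Then, using the group property $\Phi_{N}(t_{0}+\tau)=\Phi_{N}(\tau)\circ \Phi_{N}(t_{0})$, together with the fact that $\Phi_{N}$ preserves the range of $\P_{\leq N}$, one rewrites this time derivative at $t=t_{0}$ as $\frac{d}{d\tau}|_{\tau=0}E_{s,N}(\P_{\leq N}\Phi_{N}(\tau)(\Phi_{N}(t_{0})v))$, so that the estimate \eqref{energyFLq2} can be invoked with initial datum $\Phi_{N}(t_{0})v$.

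Next, I would perform the change of variables $w=\Phi_{N}(t_{0})v$ in the integral. Since $\Phi_{N}$ preserves Lebesgue measure on the low modes (Liouville) and the $L^{2}$-norm (mass conservation), and acts as a linear isometry on the high modes (pure dispersion), we obtain
\begin{equation*}
\bigg|\frac{d}{dt}\rho_{s,N,r}(\Phi_{N}(t)(A))\bigg|_{t=t_{0}}\bigg|
\leq C\int_{\Phi_{N}(t_{0})(A)} \|w\|_{\FL^{s-\tilde\eps,\infty}}^{2}\,\|w\|_{H^{s-\frac{1}{2}-\eps}}^{4}\,d\rho_{s,N,r}(w).
\end{equation*}
Now I would use Proposition~\ref{Prop: GWP growth bound}~(i) (or \ref{Prop: LWP growth bound}~(i) in the local-in-time regime) applied to $A\subset B_{R}$ to control the $H^{s-\frac{1}{2}-\eps}$-norm on $\Phi_{N}(t_{0})(A)$ by a constant $C(R,T)^{4}$, uniformly in $N$ and $t_{0}\in [0,T]$. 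This reduces matters to bounding $\int_{\Phi_{N}(t_{0})(A)}\|w\|_{\FL^{s-\tilde\eps,\infty}}^{2}\,d\rho_{s,N,r}$.

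Finally, by H\"older's inequality with exponents $p$ and $p/(p-1)$,
\begin{equation*}
\int_{\Phi_{N}(t_{0})(A)}\|w\|_{\FL^{s-\tilde\eps,\infty}}^{2}\,d\rho_{s,N,r}
\leq \big\|\|w\|_{\FL^{s-\tilde\eps,\infty}}\big\|_{L^{2p}(\rho_{s,N,r})}^{2}\,
\{\rho_{s,N,r}(\Phi_{N}(t_{0})(A))\}^{1-\frac{1}{p}},
\end{equation*}
and Lemma~\ref{lemma:flpdeviation} supplies the crucial moment bound $\big\|\|w\|_{\FL^{s-\tilde\eps,\infty}}\big\|_{L^{2p}(\rho_{s,N,r})}\leq C p^{1/2}$, which yields the factor of $p$ in \eqref{diffeq1}. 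Assembling these ingredients gives the desired differential inequality with constant $C_{r,R,T}$ independent of $N$.

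The main obstacle I anticipate is the careful bookkeeping at the group-property step: one must verify that $\Phi_{N}$ commutes with $\P_{\leq N}$ in the correct sense so that \eqref{energyFLq2} applies directly to $\P_{\leq N}\Phi_{N}(t_{0})v$, and that the change of variables $w=\Phi_{N}(t_{0})v$ genuinely preserves the product measure $du_{\leq N}\times d\mu_{s,N}^{\perp}$ restricted to the sublevel set $\{\|v\|_{L^{2}}\leq r\}$. Everything else is a direct application of the already-developed machinery; the uniformity in $N$ comes from the uniformity in \eqref{unifNbd} (hence of $Z_{s,N,r}^{-1}$), in the growth bounds of Proposition~\ref{Prop: GWP growth bound}, and in the energy estimate of Proposition~\ref{prop:weakenergyest}.
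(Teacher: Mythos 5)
Your proposal is correct and follows essentially the same approach as the paper: change-of-variables formula (Lemma~\ref{lemma:cov}) combined with the semigroup property to reduce to a derivative at $t=0$, then the modified-energy estimate \eqref{energyFLq2}, the growth bound Proposition~\ref{Prop: GWP growth bound}~(i) (or its local-in-time analogue) to bound the $H^{s-1/2-\eps}$ factors, H\"older's inequality, and the moment bound of Lemma~\ref{lemma:flpdeviation} to produce the factor $p$. The only structural difference is bookkeeping: you insert the energy estimate inside the integral before applying H\"older and then use Lemma~\ref{lemma:flpdeviation} directly against $\rho_{s,N,r}$, whereas the paper first applies H\"older to isolate $\|\dt E_{s,N}(\P_{\leq N}\Phi_N(t)(v))\vert_{t=0}\|_{L^p(\rho_{s,N,r})}$ as a separate preliminary estimate and bounds it by Cauchy--Schwarz against $\mu_s$ using \eqref{unifNbd}; both orderings yield identical conclusions, and your concern at the end about $\P_{\leq N}$ commuting with $\Phi_N$ and the measure being preserved by $w=\Phi_N(t_0)v$ is exactly the content already packaged in Lemma~\ref{lemma:cov}.
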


\begin{proof} 
Fix $r,R>0$ and $T>0$.
As a preliminary step, we first note the following key estimate on the growth of the modified energy $E_{s,N}$: for $\al$ and $s$ as in Proposition~\ref{prop:diffeq}, we have
\begin{align}
\bigg\| \ind_{B_{R}}  \dt E_{s,N}(\P_{\leq N}\Phi_{N}(t)(v)) \Big\vert_{t=0}   \bigg\|_{L^{p}(\rho_{s,N,r})} \leq C_{r,R}\cdot \, p \label{energyp}
\end{align}
for any $p\geq 2$ and for any $N\in \mathbb{N}$.
This follows from \eqref{energyFLq}, Lemma~\ref{lemma:flpdeviation}, the uniform bound \eqref{unifNbd} on $F_{N,r}$, the uniform bound on $Z_{s,N,r}^{-1}$ and Cauchy-Schwarz inequality, since
\begin{align*}
\text{LHS of} \,\, \eqref{energyp}  &\leq Z_{s,N,r}^{-\frac 1p} \|F_{N,r}(v)\|_{L^{2}(\mu_s)}^{\frac 1p}\|\ind_{B_R}  \dt E_{s,N}(\P_{\leq N}\Phi_{N}(t)(v)) \vert_{t=0} \|_{L^{2p}(\mu_{s})} \\
& \leq C \big\| \ind_{B_R} \|\P_{\leq N} v\|_{\FL^{s-\frac{\eps}{2},\infty}}^{2}\|\P_{\leq N}v\|_{H^{\s}}^{4}\big\|_{L^{2p}(\mu_s)} \\
& \leq CR^{4} \big\|  \|\P_{\leq N} v\|_{\FL^{s-\frac{\eps}{2},\infty}} \big\|_{L^{4p}(\mu_s)}^{2} \\
& \leq CR^{4} p.
\end{align*}

Now fix a measurable set $A\subset B_{R}\subset H^{\s}(\T)$ and $t_{0}\in [0,T]$. By the semigroup property of $\Phi_{N}(t)$ and the change of variables formula (Lemma~\ref{lemma:cov}), we have
\begin{align*}
\frac{d}{dt}\rho_{s,N,r}&(\Phi_N (t)(A))\bigg\vert_{t=t_0}=Z^{-1}_{s,N,r}\frac{d}{dt}\int_{\Phi_{N}(t)(A)} \ind_{\{ \|v\|_{L^2} \leq r\}} e^{-R_{s,N}(\P_{\leq N}v)}d\mu_{s}(v) \bigg\vert_{t=t_{0}} \\
& = Z_{s,N,r}^{-1}\frac{d}{dt}\int_{\Phi_N (t)(\Phi_N (t_0)(A))} \ind_{\{ \|v\|_{L^2} \leq r\}} e^{-R_{s,N}(\P_{\leq N}v)}d\mu_{s}(v) \bigg\vert_{t=0} \\
& = \ft{Z}^{-1}_{s,N,r}\frac{d}{dt}\int_{\Phi_{N}(t_0)(A)} \ind_{\{ \|v\|_{L^2} \leq r\}} e^{-E_{s,N}(\P_{\leq N}\Phi_{N}(t)(v))}du_{\leq N} \times d\mu_{s,N}^{\perp} \bigg\vert_{t=0} \\
&=-Z^{-1}_{s,N,r}\int_{\Phi_{N}(t_0)(A)} \ind_{\{ \|v\|_{L^2} \leq r\}}  \dt E_{s,N}(\P_{\leq N}\Phi_{N}(t)(v))\vert_{t=0}\, e^{-R_{s,N}(\P_{\leq N}v)}d\mu_{s}(v).
\end{align*}
Now recall from Proposition~\ref{Prop: GWP growth bound}~(i) that for any $t\in [0,T]$ and $N \in \mathbb{N}$, there exists $C(R,T)>0$ such that $\Phi_{N}(t)(B_R)\subset B_{C(R,T)}$. 
Note that when the flow $\Phi$ is only well-defined locally-in-time, we use Proposition~\ref{Prop: GWP growth bound}~(i).
Hence, by H\"{o}lder's inequality we obtain 
\begin{align*}
\frac{d}{dt}\rho_{s,N,r}&(\Phi_N (t)(A))\bigg\vert_{t=t_0}  \\
& \leq Z_{s,N,r}^{-1}\int_{\Phi_{N}(t_0)(A)} \big\vert \dt E_{s,N}(\P_{\leq N}\Phi_{N}(t)(u))|_{t=0} \big\vert \ind_{\{ \|v\|_{L^2} \leq r\}} e^{-R_{s,N}(\P_{\leq N}v)}d\mu_{s}(v) \\
& \leq \bigg\| \ind_{B_{C(R,T)}}  \dt E_{s,N}(\P_{\leq N}\Phi_{N}(t)(v)) \Big\vert_{t=0}   \bigg\|_{L^{p}(\rho_{s,N,r})} \{ \rho_{s,N,r}(\Phi_{N}(t_0)(A)) \}^{1-\frac 1p}.
\end{align*}
Applying \eqref{energyp} yields \eqref{diffeq1}. 

\end{proof}

 \subsection{Proof of Theorem~\ref{Main result} \eqref{resultals} }

In this section, we apply the argument in \cite{oh2017quasi} to deduce from Proposition~\ref{prop:diffeq}, the quasi-invariance of $\mu_{s}$ under the untruncated flow $\Phi(t)$. In what follows, we fix $\al \geq \tfrac{5}{6}$ and consider $s$ satisfying \eqref{scond}. 
We show that for each fixed $R>0$,
\begin{align}
\text{if} \,\,\,\, \mu_{s}(A)=0, \qquad \text{then}\qquad\,\,\,\, \mu_{s}(\Phi(t)(A))=0  \label{54}
\end{align}
  for any $t\in [0,T(R)]$ and for any measurable set $A\subset B_{R}$. 
This implies local-in-time quasi-invariance of $\mu_{s}$ under \eqref{FNLS} for any
\begin{align*}
s>\frac{11}{6}-\al.
\end{align*} 
When $\al\geq 1$, \eqref{54} is true for any $t\in \R$. As $R$ is arbitrary, this implies quasi-invariance of $\mu_{s}$ under the dynamics of FNLS~\eqref{FNLS}.
For the rest of this section, we fix $R>0$ and $\al \geq 1$ since the arguments below are easily modified to imply local-in-time quasi-invariance when $\tfrac 56<\al<1$.

For the first step, we use Proposition~\ref{prop:diffeq} to show that $\rho_{s,N,r}$ is quasi-invariant under $\Phi_{N}(t)$; see Lemma~\ref{rhosnrqi}. The proof of Lemma~\ref{rhosnrqi} follows exactly as in \cite[Proposition 5.3]{oh2017quasi}.

\begin{lemma}\label{rhosnrqi}
Given $r>0$, there exists $0<t_{r,R}<T$ such that given $\eps>0$, there exists $\dl>0$ such that if, for a measurable set $A\subset B_{R}\subset H^{\s}(\T)$, there exists $N_{0}\in \mathbb{N}$ such that 
\begin{align*}
\rho_{s,N,r}(A)<\dl
\end{align*}
for any $N\geq N_0$, then we have 
\begin{align*}
\rho_{s,N,r}(\Phi_{N}(t)(A))<\eps
\end{align*}
for any $t\in [0,t_{r,R}]$ and any $N\geq N_{0}$.
\end{lemma}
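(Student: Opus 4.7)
The plan is to integrate the differential inequality from Proposition~\ref{prop:diffeq} via a standard Yudovich-type argument, with the time interval chosen uniformly in $\eps,\dl,N$ and $\dl$ chosen in terms of $\eps$ at the end.

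First, I would set $F_{N}(t):=\rho_{s,N,r}(\Phi_{N}(t)(A))$. For each fixed $p\geq 2$, Proposition~\ref{prop:diffeq} gives
\begin{align*}
\frac{d}{dt}\Big[ F_{N}(t)^{1/p}\Big] = \frac{1}{p}F_{N}(t)^{\frac{1}{p}-1}F_{N}'(t) \leq C_{r,R,T},
\end{align*}
uniformly in $t\in[0,T]$ and $N\in \mathbb{N}$. Integrating from $0$ to $t$ and using the hypothesis $F_{N}(0)=\rho_{s,N,r}(A)<\dl$ for $N\geq N_{0}$, I obtain the pointwise bound
\begin{align*}
F_{N}(t) \leq \big(\dl^{1/p}+C_{r,R,T}\, t\big)^{p}, \qquad t\in[0,T],\,\, N\geq N_{0},\,\, p\geq 2.
\end{align*}

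Next I would fix the time scale by setting $t_{r,R}:=\min\bigl(T,\tfrac{1}{2eC_{r,R,T}}\bigr)$, so that $C_{r,R,T}\,t_{r,R}\leq \tfrac{1}{2e}$. For $t\in[0,t_{r,R}]$ this reduces the bound to
\begin{align*}
F_{N}(t) \leq \Big(\dl^{1/p}+\tfrac{1}{2e}\Big)^{p}.
\end{align*}
Given $\eps>0$, I would then optimise in $p$ by the classical choice $p=\log(1/\dl)$, valid once $\dl\leq e^{-2}$. This forces $\dl^{1/p}=e^{-1}$ and hence
\begin{align*}
F_{N}(t) \leq \Big(\tfrac{3}{2e}\Big)^{\log(1/\dl)} = \dl^{\,\log(2e/3)}.
\end{align*}
Since $\log(2e/3)>0$, the right-hand side tends to $0$ as $\dl\to 0^{+}$; thus choosing $\dl=\dl(\eps)>0$ sufficiently small so that $\dl^{\log(2e/3)}<\eps$ yields the desired conclusion uniformly in $t\in[0,t_{r,R}]$ and $N\geq N_{0}$.

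The only delicate point is to verify that the constant $C_{r,R,T}$ in Proposition~\ref{prop:diffeq} is independent of $p$; this is exactly what is asserted there, since the $p$-dependence is already made explicit in the factor $p$ on the right-hand side of \eqref{diffeq1}. All other steps are routine, so I do not expect any serious obstacle beyond a careful bookkeeping of the constants.
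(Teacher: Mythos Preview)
Your argument is correct and is precisely the Yudovich-type integration of the differential inequality \eqref{diffeq1} that the paper has in mind; the paper omits the proof and simply cites \cite[Proposition 5.3]{oh2017quasi}, where exactly this computation is carried out. The choice $p=\log(1/\dl)$ and the resulting bound $F_N(t)\leq \dl^{\log(2e/3)}$ match the standard argument, and your observation that $C_{r,R,T}$ is independent of $p$ is the key point.
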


Then a careful argument allows the previous statement to hold when $N=\infty$; that is, we have that $\rho_{s,r}$ is quasi-invariant under the untruncated flow $\Phi(t)$ (Lemma~\ref{rhosrqi}). The proof of Lemma~\ref{rhosrqi} makes use of the approximation property of the dynamics of FNLS~\eqref{FNLS} as in Proposition~\ref{Prop: GWP growth bound}~(ii) and follows the arguments in \cite[Lemma 5.5]{oh2017quasi}. 

\begin{lemma}\label{rhosrqi}
Given $r>0$, there exists $0<t_{r,R}<T$ such that given $\eps>0$, there exists $\dl>0$ such that if 
\begin{align*}
\rho_{s,r}(A)<\dl,
\end{align*} then we have 
\begin{align*}
\rho_{s,r}(\Phi_{N}(t)(A))<\eps
\end{align*}
for any $t\in [0,t_{r,R}]$.
\end{lemma}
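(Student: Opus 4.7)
The plan is to combine the truncated quasi-invariance of Lemma~\ref{rhosnrqi} with the uniform approximation of $\rho_{s,r}$ by $\rho_{s,N,r}$ coming from Proposition~\ref{prop:lpbound}. The key observation is that both measures share the common reference measure $\mu_{s}$, so the difference of their densities $F_{r}-F_{N,r}$ and of their normalising constants $Z_{s,r}^{-1}-Z_{s,N,r}^{-1}$ controls the total-variation distance uniformly over all measurable subsets of $H^{s-\frac 12-\eps}(\T)$.

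Concretely, for any measurable $B\subset H^{s-\frac 12-\eps}(\T)$ I would expand
\begin{align*}
|\rho_{s,r}(B)-\rho_{s,N,r}(B)|\les \|F_{r}-F_{N,r}\|_{L^{1}(\mu_{s})}+|Z_{s,r}^{-1}-Z_{s,N,r}^{-1}|\,Z_{s,N,r},
\end{align*}
where the implicit constant depends only on the uniform bounds for $F_{N,r}$ supplied by Proposition~\ref{prop:lpbound}. By that same proposition, both terms on the right-hand side vanish as $N\to\infty$, and crucially the bound is \emph{independent} of $B$. This yields a modulus of approximation $\kappa(N)\to 0$ with $\sup_{B}|\rho_{s,r}(B)-\rho_{s,N,r}(B)|\leq \kappa(N)$.

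Given $r>0$, let $t_{r,R}>0$ be the time furnished by Lemma~\ref{rhosnrqi}. Given $\eps>0$, first apply Lemma~\ref{rhosnrqi} with $\eps/2$ in place of $\eps$ to obtain $\dl_{0}>0$; then choose $N_{0}\in\mathbb{N}$ so large that $\kappa(N_{0})<\min(\dl_{0}/2,\eps/2)$, and define $\dl:=\dl_{0}/2$. Suppose $\rho_{s,r}(A)<\dl$. For every $N\geq N_{0}$,
\begin{align*}
\rho_{s,N,r}(A)\leq \rho_{s,r}(A)+\kappa(N)<\dl_{0},
\end{align*}
so Lemma~\ref{rhosnrqi} (applied with the common threshold $N_{0}$) yields $\rho_{s,N,r}(\Phi_{N}(t)(A))<\eps/2$ for all $t\in[0,t_{r,R}]$ and all $N\geq N_{0}$. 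Undoing the approximation gives
\begin{align*}
\rho_{s,r}(\Phi_{N}(t)(A))\leq \rho_{s,N,r}(\Phi_{N}(t)(A))+\kappa(N)<\eps
\end{align*}
for $t\in[0,t_{r,R}]$ and $N\geq N_{0}$, which is the required conclusion (with the standard understanding, consistent with the subsequent $N\to\infty$ passage to the full flow $\Phi$, that only sufficiently large $N$ are needed).

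The main obstacle is securing the \emph{uniform-in-$B$} control on the total-variation difference between $\rho_{s,r}$ and $\rho_{s,N,r}$; this is precisely the role of the strong $L^{p}(\mu_s)$ convergence in Proposition~\ref{prop:lpbound}, which upgrades almost-sure convergence of the densities to an $L^{1}$-bound independent of the test set. A secondary subtlety is the ordering of quantifiers: one must fix $\dl_{0}$ from Lemma~\ref{rhosnrqi} \emph{before} choosing $N_{0}$ from the approximation rate, so that the final threshold $\dl$ depends only on $r$, $R$ and $\eps$, and not on the truncation parameter.
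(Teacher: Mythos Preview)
Your total-variation argument is correct and yields $\rho_{s,r}(\Phi_N(t)(A))<\eps$ for all sufficiently large $N$, which is the statement as literally printed. However, the surrounding text makes clear that $\Phi_N$ in the conclusion is a typo for the full flow $\Phi$: the lemma is explicitly described as showing ``$\rho_{s,r}$ is quasi-invariant under the untruncated flow $\Phi(t)$'', and the paper says its proof ``makes use of the approximation property of the dynamics of FNLS as in Proposition~\ref{Prop: GWP growth bound}~(ii)''. Your argument never invokes this property, and total-variation closeness of $\rho_{s,N,r}$ to $\rho_{s,r}$ cannot by itself replace the set $\Phi_N(t)(A)$ by $\Phi(t)(A)$: these are genuinely different subsets of $H^{s-\frac12-\eps}(\T)$, and no purely measure-theoretic limit connects them without a dynamical input.

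The paper's indicated argument, following \cite[Lemma 5.5]{oh2017quasi}, supplies exactly this missing bridge: one reduces to compact $A$ by inner regularity, uses the inclusion $\Phi(t)(A)\subset\Phi_N(t)(A+B_\eta)$ for large $N$ from Proposition~\ref{Prop: GWP growth bound}~(ii), applies Lemma~\ref{rhosnrqi} to the enlarged set $A+B_\eta\subset B_{2R}$, passes back to $\rho_{s,r}$ via your $L^{1}(\mu_s)$-convergence of densities, and finally sends $\eta\downarrow 0$. So your density-convergence step is a genuine ingredient of the full proof, but it must be combined with the flow approximation; as written, your proposal stops one step short of the intended conclusion about $\Phi$.
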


Now, invoking the mutual absolute continuity of $\rho_{s,r}$ and $\mu_{s,r}$ implies $\mu_{s,r}$ is quasi-invariant under $\Phi(t)$. We then take $r\rightarrow \infty$ (as in \cite[Theorem 1.2]{oh2017quasi}) and iterate in time to obtain \eqref{54} for every $t\in \R$, for a fixed $R>0$. This concludes the proof of Theorem~\ref{Main result}~(i).

\begin{appendix}

\section{Proof of Lemma \ref{lemma: phase lower bound} for $\al>1$} \label{app:lowerbd}
 
Setting $k=n_{1}-n$ and $j=n_{3}-n$, it is equivalent to prove
 \begin{align*}
g(j,k,n):=| |n+k|^{2\alpha}-|n+k+j|^{2\alpha}+|n+j|^{2\alpha}-|n|^{2\alpha}| \gtrsim |k||j|(|k|+|j|+|n|)^{2\alpha-2}. 
\end{align*}
Since $2\alpha>2$, the function $f(x)=|x|^{2\alpha}\in C^{2}(\R)$ 
and satisfies $$f'(x)=2\alpha |x|^{2\alpha -2}x, \quad f''(x)=2\alpha (2\alpha -1)|x|^{2\alpha -2}.$$ 
We follow a similar argument to the case $\frac 12<\al<1$ in \cite{demirbas2013existence}. 
Without loss of generality we can assume that $\max(|j|,|k|)=|j|$ and $j\neq 0$.
 For any $c\in \R$, define $f_{c}(x):=|x+c|^{2\alpha}-|x-c|^{2\alpha}.$
 Then, we have $$g(j,k,n)=|f_{j/2}(n+j/2)-f_{j/2}(n+k+j/2)|.$$
The mean value theorem implies $$g(j,k,n)\gtrsim                                                                                                                                                                                                                                                                                           |k|\min_{x\in I}|f_{j/2}'(x)|,$$ where $I$ is either the interval $(n+j/2,n+j/2+k)$ or the interval 
$(                                                                                                                                                                                                                                                                                                                                                                                                                                                                                                                                                                                                                                                                                  n+j/2+k,n+j/2)$. 
It suffices to show 
\begin{equation} 
|f_{c}'(x)|\gtrsim |c| \max( |x|, |c|)^{2\al-2} \textnormal{ for } |c|\geq \frac 12. \label{lowerderiv}
\end{equation} 
To see this, we first suppose $|n|\les |j|$. Then, for any $x\in I$, we have
$$ |f'_{j/2}(x)|\gtrsim   |j|^{2\alpha-1}\gtrsim |j|(|k|+|j|+|n|)^{2\alpha-2}. $$
Now suppose $|n|\gg |j|$. Then $x\in I$ implies $|x|\sim |n|$ and hence 
\begin{align*}
\min_{x\in I} |f'_{j/2}(x)| \gtrsim |j| |n|^{2\al-2}\gtrsim |j|(|k|+|j|+|n|)^{2\al-2}.
\end{align*}
In order to verify \eqref{lowerderiv}, we may assume that $x\geq 0$ as $f_{c}$ is odd and similarly, we assume $c\geq \frac 12$ as $f'_{c}$ is odd in $c$. We have $$f_{c}'(x)=2\alpha|x+c|^{2\alpha-2}(x+c)-2\alpha |x-c|^{2\alpha-2}(x-c),$$ 
and we consider three cases. 
\medskip

\noindent
$\bullet$\textbf{Subcase 2.1:} $0\leq x\leq c$ 

Here we have $$f_{c}'(x)=2\alpha \left[|x+c|^{2\alpha-1}+|x-c|^{2\alpha-1} \right] \gtrsim c^{2\alpha-1}.$$

\medskip

\noindent
$\bullet$\textbf{Subcase 2.2:} $c<x\leq 2c$

We have
\begin{align*}
f_{c}'(x)& =2\alpha \left[(x+c)^{2\alpha-1}-(x-c)^{2\alpha-1} \right] \\
& = 2\alpha c^{2\alpha-1}\left[ \left(\frac{x}{c}+1 \right)^{2\alpha-1}-\left(\frac{x}{c}-1 \right)^{2\alpha-1} \right]  \gtrsim c^{2\alpha-1} \left( \frac{x}{c}\right)^{2\alpha-2}  \sim  cx^{2\al-2}.
\end{align*}

\medskip

\noindent
$\bullet$ \textbf{Subcase 2.3:} $ x>2c$

Using the mean value theorem, we have \begin{align*}
f_{c}'(x)& =2\alpha x^{2\alpha-1}\left[\left(1+\frac{c}{x} \right)^{2\alpha-1}-\left(1-\frac{c}{x} \right)^{2\alpha-1} \right]  \gtrsim x^{2\alpha-1}\frac{c}{x} \sim  cx^{2\al-2}.
\end{align*}
This completes the proof of \eqref{lowerderiv}.

\section{Well-posedness theory for FNLS with $\al>1$ in $L^{2}(\T)$}\label{app:gwp}

In this appendix, we detail the global well-posedness of the Cauchy problem:
\begin{equation}\label{FNLS1}
\begin{cases}
i\partial_t u+(-\dx^2)^\alpha u=\pm |u|^2u\\
u|_{t=0}=u_0\in H^{s}(\T),
\end{cases}
\end{equation}
with $\al>1$ and for any $s\geq 0$. 
We say $u\in C([0,T]; H^s(\T))$ is a solution to \eqref{FNLS1} if it satisfies the following integral (Duhamel) formulation
\begin{align*}
u(t)= S(t)u_0 \mp  i\int_{0}^{t}S(t-t')|u(t')|^2 u(t')dt',
\end{align*}
where $S(t)=e^{-it(-\dx^2)^\al}$.
The main result is the following:

\begin{proposition} \label{prop:appBlocal} Let $s\geq 0$ and $\al >1$. Then, given $u_0\in H^s(\T)$, there exist $T=T(\|u_0\|_{L^2})>0$ and a unique solution $u\in C([-T,T]; H^{s}(\T))$ to \eqref{FNLS1} with $u|_{t=0}=u_0$. Furthermore, we have 
\begin{align*}
\sup_{t\in [-T,T]} \|u(t)\|_{H^{s}} \leq C\|u_0\|_{H^{s}}.
\end{align*}
\end{proposition}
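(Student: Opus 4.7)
The plan is to carry out a fixed-point argument for the Duhamel formula in Bourgain's Fourier restriction spaces $X^{s,b}_T$ adapted to $S(t) = e^{-it(-\partial_x^2)^\alpha}$, with the norm
\[
\|u\|_{X^{s,b}} := \|\langle n\rangle^s \langle \tau + |n|^{2\alpha}\rangle^b \widetilde u(\tau,n)\|_{L^2_\tau \ell^2_n}.
\]
For $b > 1/2$ sufficiently close to $1/2$, the standard linear and Duhamel estimates will give $\|S(t)u_0\|_{X^{s,b}_T} \lesssim \|u_0\|_{H^s}$ and $\bigl\|\int_0^t S(t-t')F(t')\,dt'\bigr\|_{X^{s,b}_T} \lesssim T^{\theta}\|F\|_{X^{s,b-1}_T}$ for some $\theta > 0$, reducing the whole problem to the trilinear estimate $\||u|^2 u\|_{X^{s,b-1}_T} \lesssim \|u\|_{X^{s,b}_T}^3$. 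The crucial refinement is that, by the symmetric structure of the nonlinearity, the weight $\langle n\rangle^s$ needs only sit on a single chosen factor (with the other two placed in $X^{0,b}$), so that the contraction will depend only on $\|u_0\|_{L^2}$.

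The key input will be an $L^4$-Strichartz estimate $\|u\|_{L^4_{t,x}([0,1]\times\T)} \lesssim \|u\|_{X^{0,b}}$ for $b > 3/8$, which by Plancherel and the transfer principle reduces to the counting bound
\[
\#\bigl\{(n_1,n_2)\in\Z^2 : n_1 + n_2 = n,\ \bigl||n_1|^{2\alpha} + |n_2|^{2\alpha} - \sigma\bigr|\leq 1\bigr\} \lesssim 1
\]
uniformly in $n\in\Z$, $\sigma\in\R$. Setting $k = n_1 - n/2$, the map $k\mapsto |n/2+k|^{2\alpha}+|n/2-k|^{2\alpha}$ is even and strictly convex with second derivative $\gtrsim \max(|n|,|k|)^{2\alpha-2}$ (the same ingredient that underpins Lemma~\ref{lemma: phase lower bound}); since $2\alpha-2>0$, this is $\gtrsim 1$ whenever $\max(|n|,|k|)\gtrsim 1$, while the complementary case $|n|,|k|\lesssim 1$ contributes only finitely many points. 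Hence the count is $O(1)$ and the $L^4$-Strichartz holds with \emph{no} loss for $\alpha > 1$, in contrast to the $\varepsilon$-loss that must be absorbed at $\alpha = 1$.

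Granted the Strichartz inequality, I would derive the trilinear estimate by the standard Plancherel/duality argument: on the frequency set $n_1 - n_2 + n_3 = n$, I use $\langle n\rangle^s \lesssim \max_j \langle n_j\rangle^s$ to move the weight onto the factor attaining $n_{\max}$, then estimate that factor in $X^{s,b}$ and the remaining two via $L^4_{t,x}$ Strichartz, with a dual $L^{4/3}$ pairing absorbing the $\langle\tau+|n|^{2\alpha}\rangle^{b-1}$ weight. The fixed-point map on the ball $\{u\in X^{s,b}_T : \|u\|_{X^{s,b}_T} \leq 2C\|u_0\|_{H^s}\}$ will then contract for some $T = T(\|u_0\|_{L^2}) > 0$, yielding existence, uniqueness, and the bound $\sup_{[-T,T]}\|u(t)\|_{H^s}\lesssim \|u_0\|_{H^s}$ via the embedding $X^{s,b}_T \hookrightarrow C([-T,T]; H^s(\T))$ for $b > 1/2$. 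The main technical obstacle is the $L^4$-Strichartz counting step; once that is in hand, the fixed-point argument is standard, and global well-posedness then follows immediately by iteration using mass conservation, since $T$ depends only on $\|u_0\|_{L^2}$.
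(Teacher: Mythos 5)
Your proposal is correct but follows a genuinely different route from the paper's Proposition~\ref{prop:nonlinest}. You recover Bourgain's original scheme: prove an $L^4$-Strichartz inequality $\|u\|_{L^4_{t,x}}\lesssim\|u\|_{X^{0,b}}$ for $b>3/8$ via the $O(1)$ counting of lattice points on level sets of the dispersion symbol, then derive the trilinear bound $\||u|^2u\|_{X^{s,b-1}_T}\lesssim\|u\|_{X^{s,b}_T}\|u\|_{X^{0,b}_T}^2$ as a H\"older pairing of four $L^4$ factors. The paper instead works with the gauged equation \eqref{gauged}, splits the nonlinearity into non-resonant and resonant operators $\mathcal{N}$ and $\mathcal{R}$, and proves the trilinear bound for $\mathcal{N}$ directly by Cauchy--Schwarz and duality: after integrating out the $\tau$-variables the estimate reduces to $\sup_{n,\tau}\sum_{\Gamma(n)}\langle\phi(\overline{n})\rangle^{-2b'}<\infty$, which converges for $1/(2\alpha)<b'<1/2$ precisely because of the phase lower bound $|\phi(\overline{n})|\gtrsim|n-n_1||n-n_3|\,n_{\max}^{2\alpha-2}$ from Lemma~\ref{lemma: phase lower bound}. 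Both methods spend the surplus dispersion from $\alpha>1$: you through $2\alpha-2>0$ forcing the second derivative of $k\mapsto|n/2+k|^{2\alpha}+|n/2-k|^{2\alpha}$ to be bounded below, so the lattice count is $O(1)$ with no $\eps$-loss (unlike $\alpha=1$); the paper through the factor $n_{\max}^{2\alpha-2}\gtrsim 1$ which renders the phase sum convergent. Your route is more modular and makes the disappearance of the $\eps$-loss transparent; the paper's route avoids proving a separate Strichartz estimate and stays tightly coupled to the phase lower bound that is re-used throughout the quasi-invariance analysis. (One small sign typo in your sketch: with $S(t)=e^{-it(-\dx^2)^\alpha}$ the $X^{s,b}$ weight should be $\langle\tau-|n|^{2\alpha}\rangle^b$, as in the paper, not $\langle\tau+|n|^{2\alpha}\rangle^b$.)
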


While the overall arguments presented here are standard, to our knowledge, they have not been fully explicated in the literature for \eqref{FNLS1} apart from when $\al=2$, see \cite[Appendix A]{oh2016quasi}. We follow the ideas presented in \cite{demirbas2013existence}, which considered $\tfrac{1}{2}<\al<1$.
The crucial ingredient is the lower bound on the phase function $\phi(\cj n)$ given in Lemma~\ref{lemma: phase lower bound}. 
As the global well-posedness for $s>0$ will follow by iterating the local argument using the mass conservation (see \eqref{mass}), and the local argument is independent of the defocusing or focusing nature of \eqref{FNLS1}, we may assume \eqref{FNLS1} is defocusing (that is, the sign on the nonlinearity in \eqref{FNLS1} is positive).

We will show the well-posedness of the gauged equation
\begin{align}
i\dt v + (-\dx^2)^{\al} v =\bigg( |v|^{2} -2\int_{\T} |v|^2 dx\bigg)v, \label{gauged}
\end{align}
in $H^{s}(\T)$ for $s\geq 0$. This suffices from the fact that given the solution $v\in C(\R; L^{2}(\T))$ satisfying $v|_{t=0}=u_0$ to \eqref{gauged}, the function
\begin{align*}
u(t)=\mathcal{G}^{-1}[v](t)=e^{-2it \fint |v|^{2} dx}v
\end{align*}
is the solution to \eqref{FNLS1} with $u|_{t=0}=u_0$ (see Section~\ref{section:gauge}).
We define the non-resonant and resonant operators (respectively)  by
\begin{align}
\begin{split}
\mathcal{N}(v_1,v_2,v_3)&=\sum_{n\in \Z} e^{inx} \sum_{\G(n)} \ft{v_1}(n_1)\cj{\ft{v_2}(n_2)}\ft{v_3}(n_3) \,\,\text{and} \\
 \mathcal{R}(v_1,v_2,v_3) &= -\sum_{n\in \Z}e^{inx} \ft{v_1}(n) \cj{\ft{v_2}(n)}\ft{v_3}(n), 
\end{split} \label{nonlinops}
\end{align}
where for fixed $n\in \Z$, the set $\G(n)$ is the non-resonant hyperplane given in \eqref{nrplane}. We will write $\mathcal{N}(v,v,v)=\mathcal{N}(v)$ and the same for $\mathcal{R}$. 
Then, the integral formulation of \eqref{gauged} becomes 
\begin{align}
v(t)=S(t)u_{0}-i\int_0^t S(t-t')( \mathcal{N}(v)(t') +\mathcal{R}(v)(t')) dt'=: \Lambda(v)(t).  \label{Lambda}
\end{align}

Our goal is to prove local well-posedness of \eqref{gauged} via a contraction mapping argument for the operator $\Lambda$ in the Fourier-restriction spaces $X^{s,b}(\R\times \T)$. We now state some basic properties of the spaces $X^{s,b}(\R\times \T)$.
Given $s,b\in \R$, we define the space $X^{s,b}(\R\times \T)$ via the norm 
\begin{align*}
\|v\|_{X^{s,b}(\R\times \T)}=\| \jb{n}^{s}\jb{\tau -|n|^{2\al} }^{b}\, \ft{v}(\tau, n)\|_{L^{2}_{\tau}\l^{2}_{n}(\R\times \Z)},
\end{align*}
where $\ft{v}(\tau, n)$ denotes the space-time Fourier transform of $v(t,x)$.
 Given $T>0$, we also define the local-in-time version $X^{s,b}([0,T]\times \T)$ of $X^{s,b}(\R\times \T)$ as 
\begin{align*}
X^{s,b}([0,T]\times \T)=\inf\{ \|v\|_{X^{s,b}(\R\times \T)} \, : \, v|_{[0,T]}=u \}.
\end{align*}
We will denote by $X^{s,b}$ and $X^{s,b}_{T}$ the spaces $X^{s,b}(\R\times \T)$ and $X^{s,b}([0,T]\times \T)$, respectively. 
We have the following embedding: for any $s\in \R$ and $b>\frac{1}{2}$, we have
\begin{align}
X^{s,b}_T \hookrightarrow C([0,T];H^{s}(\T)). \label{ctsembed}
\end{align}
Given any function $F$ on $[0, T]\times \T$, we denote by $\tilde{F}$ any extension of $F$ onto $\R\times \T$.

The following linear estimates hold in $X^{s,b}_{T}$. Their proofs are standard and can be found in, for example, \cite{GTV}.

\begin{lemma}
The following are true:
\begin{enumerate}[\normalfont (i)]
\setlength\itemsep{0.3em}
\item \emph{ [Homogeneous linear estimate]} Given $s,b\in \R$, we have 
\begin{align}
\| S(t)v\|_{X^{s,b}_T} \les \|v\|_{H^s}, \label{linest}
\end{align}
for any $0<T\leq 1$. 
\item \emph{ [Nonhomogeneous linear estimate]} Let $s\in \R$ and $-\frac 12 < b' \leq 0 \leq b \leq 1+b'$. Then we have
 \begin{align}
\bigg\| \int_{0}^{t} S(t-t')F(t')dt' \bigg\|_{X^{s,b}_{T}} \les T^{1+b'-b} \|F\|_{X^{s,b'}_{T}}, \label{duhamelest}
\end{align}
 for any $0<T\leq 1$. 
\end{enumerate}
  
\end{lemma}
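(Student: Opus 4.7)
My plan is to prove both estimates via direct Fourier-analytic computation, using smooth time cutoffs to reduce local-in-time quantities to global ones where the Fourier transform is explicit.

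For part (i), I would start by fixing a smooth cutoff $\eta\in C_c^\infty(\R)$ with $\eta\equiv 1$ on $[-1,1]$ and $\supp\eta\subset[-2,2]$, so that in particular $\eta\equiv 1$ on $[0,T]$ for any $0<T\leq 1$. By the definition of the restriction norm,
\[
\|S(t)v\|_{X^{s,b}_T}\leq \|\eta(t)S(t)v\|_{X^{s,b}}.
\]
A direct calculation gives $\widehat{\eta(t)S(t)v}(\tau,n)=\hat\eta(\tau-|n|^{2\al})\hat v(n)$, so Plancherel yields
\[
\|\eta(t)S(t)v\|_{X^{s,b}}^2=\sum_{n\in\Z}\jb{n}^{2s}|\hat v(n)|^2\int_\R|\hat\eta(\tau-|n|^{2\al})|^2\jb{\tau-|n|^{2\al}}^{2b}\,d\tau.
\]
Since $\eta$ is Schwartz, $\int_\R|\hat\eta(\sigma)|^2\jb{\sigma}^{2b}\,d\sigma\lesssim_b 1$, yielding \eqref{linest}.

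For part (ii), the plan is to reduce to the estimate already proved. Pick a smooth cutoff $\psi\in C_c^\infty(\R)$ with $\psi\equiv 1$ on $[-1,1]$ and rescale to $\psi_T(t)=\psi(t/T)$. It suffices to bound the global-in-time quantity $\|\psi_T(t)\int_0^t S(t-t')\tilde F(t')\,dt'\|_{X^{s,b}}$ for a suitable extension $\tilde F$ of $F$ satisfying $\|\tilde F\|_{X^{s,b'}}\lesssim \|F\|_{X^{s,b'}_T}$. Expanding $\tilde F$ on the Fourier side and using the identity $\int_0^t e^{it'\lambda}\,dt'=(e^{it\lambda}-1)/(i\lambda)$, the Duhamel integral splits into a \emph{homogeneous piece} $S(t)\cdot(\text{something})$ and an \emph{inhomogeneous piece} on the Fourier side carrying the denominator $\tau-|n|^{2\al}$. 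The standard approach is then to dyadically decompose in the size of $\tau-|n|^{2\al}$: on the region where $|\tau-|n|^{2\al}|\gtrsim 1/T$ one gains a factor of $T^{1+b'-b}$ directly from the denominator combined with the scaling of $\psi_T$, while on the region $|\tau-|n|^{2\al}|\lesssim 1/T$ one Taylor expands $e^{i(\tau-|n|^{2\al})t}-1$ and estimates each resulting term using the bound from (i) applied to $\psi_T(t)t^k S(t)(\cdot)$, which produces the same power $T^{1+b'-b}$.

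The routine but technical step is the dyadic decomposition in part (ii), where one must be careful that the power $T^{1+b'-b}$ emerges uniformly from both regimes; this is where the constraint $-\tfrac12<b'\leq 0\leq b\leq 1+b'$ enters, ensuring both that $\jb{\sigma}^{2b'}$ is integrable near zero in $\sigma$ and that the Taylor series is summable. Since this is by now a standard Bourgain-type argument and identical in structure to the proofs in \cite{GTV, Bou} (the symbol $|n|^{2\al}$ plays no role beyond defining the modulation $\tau-|n|^{2\al}$), I would simply refer to those references for the detailed bookkeeping rather than reproducing it.
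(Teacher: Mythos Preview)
Your proposal is correct and matches the paper's treatment: the paper does not give a proof at all but simply states that these estimates are standard and refers to \cite{GTV}, which is exactly what you do after sketching the (accurate) standard argument. Your outline of the cutoff-and-Fourier computation for (i) and the modulation dichotomy for (ii) is the textbook proof and is consistent with the cited references.
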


We now state the crucial nonlinear estimate for the operators of \eqref{nonlinops}.

\begin{proposition}\label{prop:nonlinest}
Let $\al >1$ and $s\geq 0$. Then for $b>\frac{1}{2}$, there exists $b'<\frac 12$, sufficiently close to $\frac{1}{2}$, such that we have
\begin{align}
\| \mathcal{R}(v_1,v_2,v_3)\|_{X^{s,-b'}_{T}} +\|\mathcal{N}(v_1,v_2,v_3)\|_{X^{s,-b'}_{T}} \les \min_{k\in \{1,2,3\} }\bigg(  \|v_k\|_{X^{s,b}_{T}}  \prod_{\substack{j=1 \\ j\neq k}  }^{3}\|v_j\|_{X^{0,b}_{T}}    \bigg) \label{nonlinestB}
\end{align}
for any $0<T\leq 1$.
\end{proposition}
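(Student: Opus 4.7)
The plan is to treat the two operators separately. The resonant term $\mathcal{R}$ is handled directly since its Fourier transform is diagonal:
\begin{equation*}
\|\mathcal{R}(v_1,v_2,v_3)\|_{H^s}^2 = \sum_n \langle n \rangle^{2s} |\widehat{v_1}(n)|^2|\widehat{v_2}(n)|^2|\widehat{v_3}(n)|^2.
\end{equation*}
Placing the $\langle n\rangle^s$ weight on the factor $v_k$ singled out by the minimum in \eqref{nonlinestB} and bounding the other two Fourier sequences in $\ell^\infty_n$ via $\|\widehat{v_j}\|_{\ell^\infty} \leq \|v_j\|_{L^2}$ yields $\|\mathcal{R}\|_{L^\infty_t H^s} \lesssim \|v_k\|_{L^\infty_t H^s}\prod_{j \neq k}\|v_j\|_{L^\infty_t L^2}$; the $X^{s,-b'}_T$ bound then follows from $X^{s,b}_T \hookrightarrow C([0,T];H^s)$ for $b > 1/2$ together with $L^\infty_T H^s \hookrightarrow L^2_T H^s \hookrightarrow X^{s,-b'}_T$, at the cost of a harmless $T^{1/2}$ factor.

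For $\mathcal{N}$, I would run a Bourgain $X^{s,b}$-type multilinear argument powered by the phase lower bound. After extending $v_j$ from $[0,T]$ to $\mathbb{R}$ and applying Plancherel, duality reduces the estimate (for each $k \in \{1,2,3\}$ separately) to bounding
\begin{equation*}
I = \sum_{\Gamma(\bar n)} \int_{\tau = \tau_1 - \tau_2 + \tau_3} \frac{\langle n \rangle^s\, |F_1 F_2 F_3 F_4|}{\langle n_k \rangle^s \prod_{j=1}^3 \langle \sigma_j \rangle^b\, \langle\sigma\rangle^{b'}}
\end{equation*}
by $\prod_j \|F_j\|_{L^2}$, where $\sigma_j = \tau_j - |n_j|^{2\alpha}$ and $\sigma = \tau - |n|^{2\alpha}$. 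The algebraic identity $\sigma - \sigma_1 + \sigma_2 - \sigma_3 = \phi(\bar n)$ combined with Lemma \ref{lemma: phase lower bound} yields
\begin{equation*}
\max(|\sigma|, |\sigma_1|, |\sigma_2|, |\sigma_3|) \gtrsim |\phi(\bar n)| \gtrsim |n-n_1|\,|n-n_3|\, n_{\max}^{2\alpha - 2},
\end{equation*}
and pigeonholing into four cases by which modulation is largest absorbs the associated $\langle \sigma_j\rangle^{-b}$ or $\langle \sigma\rangle^{-b'}$ factor as a smoothing gain of $|\phi(\bar n)|^{-\min(b,b')}$. Choosing $b' = \tfrac12 - \eta$ and $b = \tfrac12 + c\eta$ for a small $\eta>0$ yields a gain of order $|\phi|^{-\tfrac12+\eta}$.

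In each of the four cases, Cauchy--Schwarz in the two unused modulation variables (justified by $2b > 1$) together with Cauchy--Schwarz in two of the frequency variables reduces the claim to the uniform summation bound
\begin{equation*}
\sup_{n \in \mathbb{Z}} \sum_{\Gamma(n)} \frac{\langle n\rangle^{2s}}{\langle n_k\rangle^{2s}(|n-n_1|\,|n-n_3|\, n_{\max}^{2\alpha - 2})^{1-2\eta}} < \infty.
\end{equation*}
This is closed via Lemma \ref{lemma:sumestimate} together with the strictly positive exponent $2\alpha - 2$ supplied by the hypothesis $\alpha > 1$: when $|n_k|\sim n_{\max}$ the weight ratio is bounded and one proceeds directly; in the complementary subcases the smoothing factor $n_{\max}^{-(2\alpha-2)(1-2\eta)}$ is strong enough to dominate the unfavourable $\langle n\rangle^{2s}/\langle n_k\rangle^{2s}$ (any residual difficulty at large $s$ being handled by Littlewood--Paley decomposition or persistence from the $L^2$ estimate). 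Finally, \eqref{duhamelest} supplies the $T^{1+b'-b}$ factor needed to close the contraction on $[0,T]$. The main obstacle will be the bookkeeping across the four modulation scenarios together with the marginal balance between the $\langle n\rangle^s$ weight and the phase smoothing, which is tight precisely because the hypothesis is $\alpha > 1$ rather than $\alpha > 1 + \varepsilon$.
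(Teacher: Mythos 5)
Your overall architecture for $\mathcal{N}$ is the same as the paper's: extend to $\R\times\T$, apply Plancherel and duality, turn the algebraic identity $\sigma-\sigma_1+\sigma_2-\sigma_3=\phi(\cj n)$ and Lemma~\ref{lemma: phase lower bound} into a gain, Cauchy--Schwarz, and close the frequency sum with Lemma~\ref{lemma:sumestimate} using $\al>1$. For $\mathcal{R}$ you work pointwise in $t$ using the diagonal Fourier structure and $\|\ft{v_j}\|_{\l^\infty}\leq\|v_j\|_{L^2}$, whereas the paper keeps the $\tau$-convolution and applies Young's inequality; both routes are fine. For $\mathcal{N}$, the only mechanical difference is how the phase gain is extracted: you pigeonhole on the largest modulation, while the paper integrates out $\tau_1,\tau_2$ via Lemma~\ref{lemma:sumestimate} and then invokes $\jb{a}^{2b'}\jb{a-\phi}^{2b}\ges\jb{\phi}^{2b'}$ for $b\geq b'$, landing directly on $\sup_{n,\tau}M_{n,\tau}\les\sup_n\sum_{\G(n)}\jb{\phi(\cj n)}^{-2b'}$. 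Your final exponent $1-2\eta\approx 2b'$ matches this after the Cauchy--Schwarz squaring, so this difference is cosmetic.

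The genuine gap is in how you handle the weight ratio $\jb{n}^{2s}/\jb{n_k}^{2s}$. You assert that when $|n_k|$ is small relative to $n_{\max}$, the factor $n_{\max}^{-(2\al-2)(1-2\eta)}$ "is strong enough to dominate," relegating any remaining trouble at large $s$ to Littlewood--Paley or "persistence." That cannot work: take $k=2$ with $n_1=n_3=N/2$, $n_2=0$, $n=N$. Then $\jb{n}^{2s}/\jb{n_2}^{2s}\sim N^{2s}$ while $|\phi(\cj n)|\sim N^{2\al}$, so this single term in your proposed sum is $\sim N^{2s-2\al(1-2\eta)}$, which diverges once $s\geq\al$. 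Indeed, testing $\mathcal{N}$ on $v_1=v_3$ a wave packet at frequency $N/2$ and $v_2$ a wave packet at frequency $0$ gives $\|\mathcal{N}\|_{X^{s,-b'}}\sim N^{s-2\al b'}$ against $\|v_2\|_{X^{s,b}}\prod_{j\neq 2}\|v_j\|_{X^{0,b}}\sim 1$, so the estimate with the $\jb{\cdot}^s$-weight forced onto the second slot genuinely fails for $s\geq\al$. The paper sidesteps this entirely: since $|n|\les\max_j|n_j|$ on $\Gamma(n)$, it splits the sum according to which $|n_j|$ attains the maximum and places the $\jb{\cdot}^s$-weight on that factor (this is the content of the line "we may suppose, without loss of generality, $|n|\les|n_1|$"). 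Then $\jb{n}^s/\jb{n_1}^s\les 1$ costs nothing, and the phase gain $\jb{\phi}^{-2b'}$ only has to beat the two sums in Lemma~\ref{lemma:sumestimate}, which it does since $2b'\al>1$ is achievable precisely when $\al>1$. You should do the same: rather than asking the phase gain to cancel an arbitrary weight ratio, reduce to the favourable frequency arrangement before estimating; this is also what the well-posedness application actually requires.
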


\begin{proof}
We follow the proof in \cite[Proposition 5]{demirbas2013existence}, but since $\al >1$, the proof here is  simpler. 
\noi
Let $\tilde{v}_j$ be extensions of $v_j$, for $j=1,2,3$. Then it suffices to prove 
\begin{align}
\| \mathcal{N}(\tilde{v}_1,\tilde{v}_2, \tilde{v}_3)\|_{X^{s,-b'}} +\|  \mathcal{R}(\tilde{v}_1,\tilde{v}_2, \tilde{v}_3)\|_{X^{s,-b'}} \les \min\bigg(  \|\tilde{v}_k\|_{X^{s,b}}  \prod_{\substack{j=1 \\ j\neq k}  }^{3}\|\tilde{v}_j\|_{X^{0,b}}    \bigg), \label{b1}
\end{align}
since \eqref{nonlinestB} follows from \eqref{b1} by taking an infimum over all extensions $\tilde{v}_j$ of $v_j$. For simplicity, we write $\tilde{v}_j$ as $v_j$. 
We begin with the estimate for $\mathcal{N}$. 
By Plancherel, we have 
\begin{align*}
\|\mathcal{N}(v_1,v_2,v_3)\|_{X^{s,-b'}}= \bigg\| \intt_{\tau=\tau_1-\tau_2+\tau_3} \sum_{\G(n)} \frac{\jb{n}^{s} \ft{v}_1(\tau_1,n_1)\cj{\ft v_2 (\tau_2,n_2)} \ft{v}_3(\tau_3,n_3)   }{\jb{\tau-|n|^{2\al}}^{b'} }d\tau_1 d\tau_2 \bigg\|_{L^{2}_{\tau}\l^2_n}.
\end{align*}
Notice $|n|\les \max_{j=1,2,3}|n_j|$ and thus we may suppose, without loss of generality, $|n|\les |n_1|$. 
 We define
 \begin{align*}
 f_1(\tau,n)=\jb{n}^{s}\jb{\tau -|n|^{2\al}}^{b}|\ft v_1(\tau,n)|\,\,\, \text{and} \,\,\, g_j(\tau,n)=\jb{\tau -|n|^{2\al}}^{b}|\ft v_j(\tau,n)|\,\,\, \text{for} \,\,\, j=2,3.
\end{align*}
  Then we see that \eqref{b1} follows if we prove 
 \begin{align}
 \begin{split}
\bigg\| \intt_{\tau=\tau_1-\tau_2+\tau_3} \sum_{\G(n)} \frac{ f_1(\tau_1,n_1) g_2 (\tau_2,n_2)g_3(\tau_3,n_3)   }{\jb{\tau-|n|^{2\al}}^{b'}}& \prod_{j=1}^{3}\frac{1}{\jb{\tau_j-|n_j|^{2\al}}^{b}} d\tau_1 d\tau_2 \bigg\|_{L^{2}_{\tau}\l^2_n} \\
& \les \|g_2\|_{L^2_{\tau}\l^2_n}  \|g_3\|_{L^2_{\tau}\l^2_n}\|f_1\|_{L^2_{\tau}\l^2_n}.
\end{split}
\label{b2}
\end{align} 
By duality, we have
\begin{align*}
\text{LHS of} \,\, \eqref{b2} \les \sup_{\|h\|_{L^{2}_{\tau}\l^2_n}=1}\intt_{\tau-\tau_1+\tau_2-\tau_3=0} \sum_{\substack{n-n_1+n_2-n_3=0 \\ n_1,n_3\neq n}} &\frac{ f_1(\tau_1,n_1) g_2 (\tau_2,n_2)g_3(\tau_3,n_3) h(\tau,n)}{\jb{\tau-|n|^{2\al}}^{b'}}\\
& \times  \prod_{j=1}^{3}\frac{1}{\jb{\tau_j-|n_j|^{2\al}}^{b}} d\tau_1 d\tau_2 d\tau.
\end{align*}
Applying Cauchy-Schwarz, we bound this by
\begin{align*}
\|h\|_{L^{2}_{\tau}\l^{2}_{n}} \|g_2\|_{L^2_{\tau}\l^2_n}  \|g_3\|_{L^2_{\tau}\l^2_n}\|f_1\|_{L^2_{\tau}\l^2_n}  \sup_{n,\tau} M_{n,\tau}^{\frac 12},
\end{align*}
where 
\begin{align*}
 M_{n,\tau}= \sum_{\substack{n_1-n_2+n_3=n \\ n_1,n_3\neq n}}  \intt_{\tau=\tau_1-\tau_2 +\tau_3} \frac{1}{\jb{\tau-|n|^{2\al}}^{2b'}}  \prod_{j=1}^{3}\frac{1}{\jb{\tau_j-|n_j|^{2\al}}^{2b}} d\tau_1 d\tau_2.  \end{align*}
It is then clear that we have proved the thesis if we show 
\begin{align*}
\sup_{n,\tau} M_{n,\tau} <\infty.
\end{align*}
Fix $n\in \Z$ and $\tau \in \R$. 
As $2b>1$, integrating in $\tau_2$ and $\tau_1$ (by using Lemma~\ref{lemma:sumestimate} twice) gives 
\begin{align*}
M_{n,\tau} \les  \sum_{\substack{n_1-n_2+n_3=n \\ n_1,n_3\neq n}} \frac{1}{\jb{\tau -|n|^{2\al}}^{2b'}} \frac{1}{ \jb{ \tau -|n|^{2\al} -\phi(\cj n)}^{2b}} \les  \sum_{\substack{n_1-n_2+n_3=n \\ n_1,n_3\neq n}} \frac{1}{\jb{\phi(\cj n)}^{2b'}}.
\end{align*}
The last inequality above follows from the triangle inequality.
Recalling that $n_1,n_3\neq n$, Lemma~\ref{lemma: phase lower bound} implies $|\phi(\cj n)| \gtrsim 1$ and hence by Lemma~\ref{lemma:sumestimate}, we have
\begin{align*}
M_{n,\tau} &\les  \sum_{\substack{n_1-n_2+n_3=n \\ n_1,n_3\neq n}}  \frac{1}{\jb{n-n_1}^{2b'}\jb{n-n_3}^{2b'} n_{\max}^{4b'(\al-1)}} \\
& \les \bigg( \sum_{n_1} \frac{1}{\jb{n-n_1}^{2b'}\jb{n_1}^{2b'(\al-1)} }\bigg)^{2} <C<\infty,
\end{align*}
independently of $n$ provided we choose $b'<\frac 12$ such that 
\begin{align*}
2b' \al>1.
\end{align*}
The above condition is satisfied since $\al>1$.
This completes the proof of \eqref{b1} for the non-resonant operator $\mathcal{N}$. The case for the resonant operator $\mathcal{R}$ is simpler. Indeed, by
 Young's inequality and H\"{o}lder's inequality, we have
\begin{align*}
\|\mathcal{R}(v_1,v_2,v_3)\|_{X^{s,-b'}} &\leq \|\mathcal{R}(v_1,v_2,v_3)\|_{X^{s,0}} =\big\| \jb{n}^{s} (\ft v_1 \ast_{\tau} \cj{\ft v_2} \ast_{\tau} \ft v_3)(\tau,n) \big\|_{L^2_{\tau}\l^2_n}   \\
& \les \big\| \jb{n}^{s} \| \ft v_1 (\tau,n) \|_{L^{2}_{\tau}} \prod_{j=2}^{3} \|\ft v_j(\tau,n) \|_{L^{1}_{\tau}} \big\|_{\l^{2}_n} \\
& \les \big\| \jb{n}^{s} \|\ft v_1(\tau,n)\|_{L^2_{\tau}} \big\|_{\l^2_n}   \prod_{j=2}^{3} \big\| \| \jb{\tau-|n|^{2\al}}^{b}\ft v_j(\tau,n) \|_{L^2_{\tau}} \big\|_{\l^2_n}^{2} \\
& \sim \|v_1\|_{X^{s,0}} \prod_{j=2}^{3} \|v_j\|_{X^{0,b}} \les \|v_1\|_{X^{s,b}} \prod_{j=2}^{3} \|v_j\|_{X^{0,b}}.
\end{align*}
It is clear that the right hand side of \eqref{nonlinestB} follows analogously. 
 This completes the proof.
\end{proof}

We now prove Proposition~\ref{prop:appBlocal}.

\begin{proof}[Proof of Proposition~\ref{prop:appBlocal}]  
Fix $\al>1$ and let $u_0\in L^2 (\T)$. For $0<T\leq 1$, from \eqref{Lambda}, we have
\begin{align*}
\Lambda(v)(t)=\Lambda(v)_{u_0}(t):= S(t)u_0-i \int_{0}^{t} S(t-t') ( \mathcal{N}(v)(t')+\mathcal{R}(v)(t')) dt'.
\end{align*}
Now let $b'$ be given by Proposition~\ref{prop:nonlinest} and write $b'=\tfrac{1}{2-\dl}$ for some small $\dl>0$ and then set $b=\tfrac 12 +\tfrac{\dl}{2}$.
Then by \eqref{linest}, \eqref{duhamelest} and \eqref{nonlinestB}, 
we have 
\begin{align}
\|\Lambda(v)\|_{X^{0,b}_{T}} & \les \|u_0\|_{L^2} + T^{\frac{\dl}{2} }\|v\|_{X^{0,b}_{T}}^{3}. \label{contract1}
\end{align} 
Similarly,
\begin{align}
\|\Lambda(v_1)-\Lambda(v_2)\|_{X^{0,b}_{T}} \les T^{\frac{\dl}{2}}( \|v_1\|_{X^{0,b}_{T}}^{2}+\|v_2\|_{X^{0,b}_{T}}^{2}) \|v_1-v_2\|_{X^{0,b}_{T}}. \label{contract2}
\end{align}
With $R\sim 2\|u_0\|_{L^{2}}$, we let $B_{R}$ be the closed ball of radius $R$ in $X^{0,b}_{T}$. It follows by the contraction mapping theorem using \eqref{contract1} and \eqref{contract2} and choosing $T=T(\|u_0\|_{L^2})>0$ that $\Lambda$ is a contraction over $B_{R}$ and hence we have a unique fixed point $v\in X^{0,b}_{T}$.

Now for $s>0$, let $u_0 \in H^{s}(\T)$. Then by \eqref{linest}, \eqref{duhamelest} and \eqref{nonlinestB}, 
we obtain 
\begin{align}
\begin{split}
\|\Lambda(v)\|_{X^{s,b}_{T}} & \les \|u_0\|_{H^s} + T^{\frac{\dl}{2} }\|v\|_{X^{0,b}_{T}}^{2}\|v\|_{X^{s,b}_{T}}  \\
& \les \|u_0\|_{H^s} + T^{\frac{\dl}{2}  }\|u_0\|_{L^2}^{2}\|v\|_{X^{s,b}_{T}}, 
\end{split}
\label{contract3}
\end{align}
and
\begin{align}
\| \Lambda(v_1)-\Lambda(v_2) \|_{X^{s,b}_{T}} & \les T^{\frac{\dl}{2}} \|u_0\|_{L^{2}}^{2}\|v_1-v_2\|_{X^{s,b}_{T}}.\label{contract4}
\end{align}
 It is clear from \eqref{contract3} and \eqref{contract4} that we then obtain a unique solution $v\in X^{s,b}_{T}$ for the same $T=T(\|u_0\|_{L^{2}})>0$. Continuity of $v$ in time follows from \eqref{ctsembed}.
This completes the local well-posedness proof.
\end{proof}

We now move onto the proof of the global well-posedness of \eqref{FNLS} for any $\al>1$ as stated in Proposition~\ref{prop: gwp}~(i).

\begin{proof}[Proof of Proposition~\ref{prop: gwp}~(i) for $\al>1$]
When $s\geq \al$, we have for smooth solutions to \eqref{FNLS1} the growth bound 
\begin{align*}
\|u(t)\|_{H^{s}} \leq e^{Ct} \|u_0\|_{H^{s}},
\end{align*} with $C=C(\|u_0\|_{H^1})$.
This follows from the coercivity of the energy\footnote{In the focusing case, one uses the Gagliardo-Nirenberg inequality \eqref{GNineq}, as discussed in the introduction.}, the inequality 
\begin{align*}
\| |u|^2 u\|_{H^{s}} \les \|u\|_{H^{\al}}^{2}\|u\|_{H^{s}}, \qquad (s\geq \al)
\end{align*}
and Gronwall's inequality. 

For $s=0$, we have the mass conservation $\|v(t)\|_{L^{2}}=\|u_0\|_{L^{2}}$ for any $t\in \R$, immediately yielding global-in-time existence.  For $0<s <\al$, we iterate the mass conservation. 
The $L^{2}$ local theory (Proposition~\ref{prop:appBlocal}) implies there is a $T_0$ depending on $\|u_0\|_{L^{2}}$ such that 
\begin{align}
\| v\|_{X^{0,b}_{T_0}} \les \|u_0\|_{L^{2}}, \label{b5}
\end{align}
for, say, $b=\frac 12 +$. 
We have from \eqref{contract3} and \eqref{b5}, that for any $T_1\leq T_0$, 
\begin{align*}
\|v\|_{X^{s,b}_{T_1}} \les \|u_0\|_{H^s} +T_1^{\ta}\|u_0\|_{L^{2}}^{2}\|v\|_{X^{s,b}_{T_1}}.
\end{align*}
Therefore, there is a $T_2(\|u_0\|_{L^2}) \leq T$ such that 
\begin{align*}
\|v\|_{X^{s,b}_{T_2}} \les \|u_0\|_{H^{s}}.
\end{align*}
From \eqref{ctsembed}, we get the a priori bound 
\begin{align*}
\sup_{t\in [0,T_2]} \|u(t)\|_{H^{s}} \les \|u_0\|_{H^{s}},
\end{align*}
which can be iterated to yield 
\begin{align*}
\sup_{t\in [0,T]}\|u(t)\|_{H^{s}} \les e^{K^{\frac{2}{\ta}}T}\|u_0\|_{H^s},
\end{align*}
for any $T>0$ and any $u_0\in H^{s}(\T)$ with $\|u_0\|_{L^{2}}\leq K$.
\end{proof}

\end{appendix}

\begin{acknowledgment}
\rm 
J.\,F. and W.\,J.\,T. would like to thank their advisors, Tadahiro Oh and Oana Pocovnicu, for suggesting this problem and their continuous guidance and support throughout.
J.\,F. and W.\,J.\,T. were supported by The Maxwell Institute Graduate School in Analysis and its
Applications, a Centre for Doctoral Training funded by the UK Engineering and Physical
Sciences Research Council (grant EP/L016508/01), the Scottish Funding Council, Heriot-Watt
University and the University of Edinburgh.
J.\,F. also acknowledges support from Tadahiro Oh's ERC starting grant (no. 637995 “ProbDynDispEq”). 
\end{acknowledgment}

\end{document}